\numberwithin{equation}{section}
\numberwithin{figure}{section}
\theoremstyle{plain}
\newtheorem{thm}{\protect\theoremname}[section]
\theoremstyle{definition}
\newtheorem{defn}[thm]{\protect\definitionname}
\theoremstyle{remark}
\newtheorem{rem}[thm]{\protect\remarkname}
\theoremstyle{plain}
\newtheorem{prop}[thm]{\protect\propositionname}
\theoremstyle{plain}
\newtheorem{lem}[thm]{\protect\lemmaname}
\theoremstyle{plain}
\newtheorem{cor}[thm]{\protect\corollaryname}
\theoremstyle{lemma}
\providecommand{\corollaryname}{Corollary}
\providecommand{\definitionname}{Definition}
\providecommand{\lemmaname}{Lemma}
\providecommand{\propositionname}{Proposition}
\providecommand{\remarkname}{Remark}
\providecommand{\theoremname}{Theorem}
\begin{document}
\title{Recurrence of a Weighted Random Walk on a Circle Packing with Parabolic
Carrier}
\author{Ori Gurel-Gurevich and Matan Seidel}
\begin{abstract}
In this paper we show that given a circle packing of an infinite planar
triangulation such that its carrier is parabolic, placing weights
on the edges according to a certain natural way introduced by Dubejko,
makes the random walk recurrent. We also propose a higher-dimensional
analogue of the Dubejko weights.
\end{abstract}

\maketitle

\section{Introduction}

A \textbf{circle packing} is a collection of circles in the plane
with disjoint interiors. The \textbf{tangency graph} of a circle packing
is the graph obtained by assigning a vertex to each circle and connecting
two vertices by an edge if their respective circles are tangent to
one another. A planar graph is called a \textbf{triangulation }if
it admits a drawing in the plane (also called a triangulation) in
which all faces are incident to exactly 3 edges, outer face included.
The celebrated circle packing theorem \cite{koebe1936kontaktprobleme,stephenson2005introduction}
asserts that every finite planar graph is the tangency graph of some
circle packing. Furthermore, if the graph is a triangulation then
its circle packing is unique up to Möbius transformations and reflections.
A concise background on the probabilistic and combinatorial properties
of circle packings can be found in \cite{nachmias2018planar}.

Infinite planar graphs can also be shown (see \cite{nachmias2018planar})
to be isomorphic to the tangency graph of some (infinite) circle packing.
However, the question of uniqueness becomes more complicated, and
requires a few more definitions. All infinite graphs in this paper
are assumed to be connected and locally finite, and infinite triangulations
are assumed to have no outer face. For an infinite triangulation drawn
in the plane we use the term \emph{face} to also mean the compact
set bounded by its edges. A circle packing induces a drawing in straight
lines of its tangency graph by mapping the vertices to the centers
of their corresponding circles. If this drawing is an infinite triangulation
and the union of its faces is a domain $\Omega\subseteq\mathbb{R}^{2}$,
the circle packing is said to be a \textbf{circle-packed infinite
triangulation of }$\Omega$, $\Omega$ is called its \textbf{carrier}
and the tangency graph is said to be \textbf{circle-packable in $\Omega$}.

In \cite{he1993fixed} and \cite{he1995hyperbolic}, He \& Schramm
extended the circle packing theorem to infinite triangulations that
are \textbf{one-ended}, i.e. such that the removal of any finite set
of vertices leaves the graph with exactly one infinite connected component
(and possibly more finite ones). They showed that the possible carriers
of a circle packing of such a triangulation are deeply linked to properties
of the simple random walk on the graph. A graph is said to be \textbf{recurrent}
if the simple random walk started at some vertex $\rho$ almost surely
returns to $\rho$ infinitely often, and \textbf{transient} otherwise.
Indeed, among their results, they showed that for a bounded-degree
one-ended triangulation, either the graph is recurrent and is circle-packable
in the plane, or it is transient and circle-packable in the open unit
disk. Another of their results shows that in the transient case, the
graph is also circle-packable in the open unit square (or any other
simply-connected domain strictly contained in the plane). Consequently,
since Möbius transformations and reflections cannot map the unit disk
to the unit square, one cannot hope for the same rigidity as in the
finite case.

When removing the assumption of one-endedness, some more definitions
are needed: A domain $\Omega\subseteq\mathbb{R}^{2}$ is called \textbf{parabolic}
if for any open set $U\subseteq\Omega$, Brownian motion started at
any point of $\Omega$ and killed at $\partial\Omega$ hits $U$ almost
surely. The domain is said to be \textbf{hyperbolic} otherwise. An
equivalent formulation for parabolicity is given in Proposition \ref{prop: Equivalent Condition for Parabolicity }.
In \cite{gurel2017recurrence}, Gurel-Gurevich, Nachmias \& Souto
showed that a dichotomy still holds without one-endedness for infinite
triangulations of bounded degree: the graph is recurrent iff the carrier
of any circle packing of it is parabolic. Their proof relied on the
Rodin-Sullivan \cite{rodin1987convergence} Ring Lemma, which shows
that when the degree is bounded, the radii of adjacent circles must
be comparable in length. Indeed, removing the bounded degree assumption
may cause the theorem to fail. For example, as described in \cite{nachmias2018planar},
we can add circles to the circle packing of the (recurrent) hexagonal
lattice in a way that creates a drift in the random walk in a direction
of choice, rendering it transient. However, the carrier has remained
parabolic, being the entire plane.
\begin{defn}
\label{def: network}A \textbf{network} is a pair $\left(G,c\right)$
where $G=\left(V,E\right)$ is a connected graph (with self-loops
allowed) and $c:E\rightarrow\left(0,\infty\right)$ is a weight function
on the edges. The \textbf{weighted random walk} on a network is the
Markov chain with state space $V$ and transition probabilities $P_{x,y}=\frac{c_{xy}}{\pi\left(x\right)}$
where $\pi\left(x\right)=\sum_{y\sim x}c_{xy}$. The network is said
to be \textbf{recurrent} if the weighted random walk started at some
vertex $\rho\in V$ almost surely returns to $\rho$ infinitely often,
and \textbf{transient} otherwise.
\end{defn}

In \cite{dubejko1997random}, Dubejko proposed a way to place weights
on the edges of a circle-packed infinite triangulation such that in
the weighted random walk on the network, the sequence of centers of
the circles visited becomes a martingale. These weights also arise
naturally in the context of discrete complex analysis (see \cite{duffin1968potential}).
For completeness, we provide Dubejko's elegant proof here as Theorem
\ref{Theorem: Dubejko Weights are a Martingale}. Let us precisely
define the weights. In the straight-line drawing induced by a circle-packed
infinite triangulation, each face $f$ is a straight-edge triangle.
Thus, a circle packing for the dual graph is induced by mapping each
face $f$ to its incircle. In the drawings of the graph and its dual,
an edge $e$ and its dual $e^{\dagger}$ are orthogonal straight lines,
as shown in Figure \ref{fig: drawing for definition of Dubejko weights}:\\
\\
\begin{figure}[H]
\includegraphics[scale=0.75]{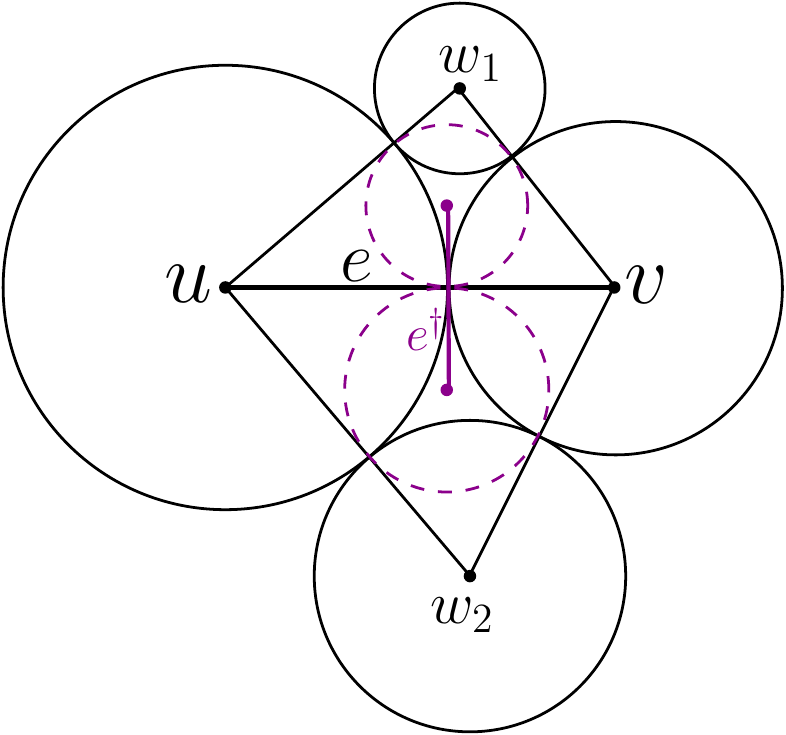}\caption{Part of a circle-packed triangulation in black and the induced circle
packing of the dual graph in purple. The dual edge $e^{\dagger}$
of $e=uv$ connects the incenters of the two faces incident to $e$.\label{fig: drawing for definition of Dubejko weights}}
\end{figure}
\begin{defn}
Let $\left\{ C_{v}\right\} _{v\in V}$ be a circle-packed infinite
triangulation. The \textbf{Dubejko weight} of an edge $e\in E$ is
$c_{e}=\frac{\left|e^{\dagger}\right|}{\left|e\right|}$, where $\left|e\right|$
and $\left|e^{\dagger}\right|$ are the respective lengths of the
straight line segments that $e$ and its dual edge $e^{\dagger}$
are mapped to.
\end{defn}

\noindent Proposition \ref{claim:weightsBoundedAbove} shows that
the weights are bounded from above by a constant. Thus, Rayleigh monotonicity
implies that if the simple random walk on a circle-packed triangulation
is recurrent then so is the weighted random walk (WRW). Furthermore,
as shown in Proposition \ref{claim: weights bounded from below if bounded degree},
when the graph has bounded degree the weights are also bounded from
below by a constant, and so in the bounded degree case the SRW and
WRW are either both recurrent or both transient. So one might hope
that replacing the SRW with the WRW would make the theorem of Gurel-Gurevich,
Nachmias \& Souto hold for both the bounded degree and unbounded degree
cases. The main goal of this paper is to prove the following:

\begin{restatable}{thm}{mainthm}\label{thm: main theorem-1}Let $\Omega\subseteq\mathbb{R}^{2}$ be a parabolic domain. Then for any circle-packed infinite triangulation of $\Omega$, the Dubejko-weighted random walk is recurrent.\end{restatable}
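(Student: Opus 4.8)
The plan is to prove recurrence through the variational (effective-conductance) characterization: the network $\left(G,c\right)$ is recurrent if and only if the effective conductance from the root $\rho$ to infinity vanishes, i.e. there exist finitely supported functions $g_n:V\to\mathbb{R}$ with $g_n(\rho)=1$ and discrete Dirichlet energy $\mathcal{E}(g_n)=\sum_{e=uv}c_e\,(g_n(u)-g_n(v))^2\to0$. I would obtain such test functions by importing them from the continuous domain: parabolicity of $\Omega$, in the form recorded in Proposition \ref{prop: Equivalent Condition for Parabolicity }, provides compactly supported functions $\phi_n$ on $\Omega$ that equal $1$ on a fixed compact set containing the circle of $\rho$ and whose continuous Dirichlet energy $\int_\Omega|\nabla\phi_n|^2$ tends to $0$. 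The entire proof then reduces to a single comparison: transferring the small continuous energy of $\phi_n$ into small discrete energy of an associated vertex function $g_n$.

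The geometric mechanism for this transfer is a tiling of $\Omega$ exactly matched to the Dubejko weights. For each edge $e=uv$, the two incenters $I,I'$ of the faces adjacent to $e$, together with $u$ and $v$, bound a quadrilateral $Q_e$ whose diagonals are precisely $e$ (of length $|e|$) and $e^{\dagger}$ (of length $|e^{\dagger}|$); since $e\perp e^{\dagger}$ one has $\mathrm{area}(Q_e)=\frac12|e||e^{\dagger}|$, and as $e$ ranges over all edges these quadrilaterals tile $\Omega$ (each face is cut by the segments from its incenter to its three vertices into three such half-boxes). The point is that $c_e=|e^{\dagger}|/|e|$ is exactly the conformal shape factor of $Q_e$ viewed as a topological rectangle whose distinguished pair of corners is $u,v$: a function varying by $g(u)-g(v)$ across $Q_e$ in the $e$--direction has energy on $Q_e$ of order $\frac{|e^{\dagger}|}{|e|}(g(u)-g(v))^2=c_e(g(u)-g(v))^2$. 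Summing over $e$ recovers $\mathcal{E}(g)$ from $\int_\Omega|\nabla\phi|^2$, which is what makes Dubejko's choice of weights the correct one; this is the infinitesimal form of the martingale identity of Theorem \ref{Theorem: Dubejko Weights are a Martingale}.

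To make the comparison rigorous I would not restrict $\phi_n$ to the vertices (point values are not controlled by the two--dimensional Dirichlet energy) but instead set $g_n(v)$ to be the average of $\phi_n$ over the circle $C_v$. For adjacent $u,v$ the circles $C_u,C_v$ are tangent, and a Poincar\'{e}/trace estimate on $Q_e$ bounds $(g_n(u)-g_n(v))^2$ by a constant times $\frac{|e|}{|e^{\dagger}|}\int_{Q_e}|\nabla\phi_n|^2$; multiplying by $c_e$ and summing --- the $Q_e$ tile $\Omega$ and hence have bounded overlap --- yields $\mathcal{E}(g_n)\lesssim\int_\Omega|\nabla\phi_n|^2\to0$. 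Crucially, the planar Dirichlet energy is scale invariant, so the constant in each local estimate depends only on the \emph{shape} of the configuration $(C_u,C_v,Q_e)$, i.e. on the ratio of the radii, and not on the absolute scale; the upper bound on the weights from Proposition \ref{claim:weightsBoundedAbove} is then available to keep the contributions near each vertex summable.

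The main obstacle is precisely the uniformity of this last step in the absence of any bounded-geometry hypothesis: when adjacent radii are wildly different, both the local Poincar\'{e} constant and the aspect ratio of $Q_e$ degenerate, and a naive edge-by-edge bound loses control of the constant. The heart of the argument is therefore to show that these degeneracies are harmless after summation --- either by grouping the quadrilaterals around each vertex and estimating the average-differences through the whole flower of circles at once, so that only the genuinely scale-invariant geometric quantities survive, or by replacing the circle-averages with a smoothing adapted to the radii. I expect this uniform energy comparison, rather than the soft reductions of the first paragraph, to carry the full weight of the proof.
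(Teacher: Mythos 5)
Your high-level strategy coincides with the paper's: use the capacity characterization of parabolicity (Proposition \ref{prop: Equivalent Condition for Parabolicity }) to produce compactly supported continuous functions of small Dirichlet energy, push them down to vertex functions, and invoke the Dirichlet-energy criterion for recurrence. The orthodiagonal quadrilaterals $Q_e$ with $\mathrm{area}(Q_e)=\frac12|e||e^{\dagger}|$ are indeed the right tiling (the paper uses exactly this in the special case $\Omega=\mathbb{R}^2$, Lemma \ref{lem: resistance accross any well-chosen annulus is atl least constant}). But there is a genuine gap, and you have located it yourself without closing it: the local energy-comparison constant (whether a Poincar\'e constant for circle-averages or a trace constant for point values) degenerates exactly when adjacent radii are wildly different, i.e.\ when the quadrilateral $Q_e$, or equivalently an angle of the polygon $P_v$ at an incenter, becomes arbitrarily sharp. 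Your two proposed remedies are not developed and neither obviously works: grouping the quadrilaterals around a vertex does not help when a huge circle is surrounded by many tiny ones (the flower is still degenerate and the auxiliary domains for the Poincar\'e estimates overlap heavily), and ``a smoothing adapted to the radii'' is not an argument. Since the whole point of the theorem is to dispense with bounded degree, this degeneracy is precisely the case that must be handled, and the proposal leaves it open.

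The paper's resolution is an idea of a different nature that is absent from your proposal: it \emph{modifies the circle packing itself}. Into every face whose sharpest polygon angle is below a threshold $\alpha_0$, one inserts a Descartes chain of circles (Lemma \ref{lem: Graph replacement}) so that, in the new packing, every edge is $\alpha_0$-nice except for a family of edges whose \emph{total} Dubejko weight is at most $\varepsilon$; those exceptional edges are then killed by the trivial bound $0\le f\le 1$ rather than by any local energy estimate. A censoring/coupling argument (Lemma \ref{lem:effective-conductances} and Corollary \ref{cor: coupling}, which rest on the martingale property of Theorem \ref{Theorem: Dubejko Weights are a Martingale}) shows the insertion does not change ${\rm C_{eff}}(\rho\leftrightarrow\infty)$. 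Only after this reduction does the local comparison go through, and even then the paper needs a second ingredient you do not have: it replaces $\phi$ by the harmonic solution of a Dirichlet problem on its support, so that Harnack-type gradient bounds (Lemmas \ref{lem: nonnegative harmonic upper bound on gradient}--\ref{lem: Integration on parts of a polygon}) control the point values $f(v)=\phi(v)$ with constants depending only on $\alpha_0$. Your circle-averaging device might substitute for the harmonicity step, but without the packing modification the summation over edges does not close, so the proof as proposed is incomplete at its central point.
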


\noindent In section \ref{sec: Some-Geometric-Lemmas} we analyze
some geometric properties of the weights. In section \ref{sec:Special-Cases}
we prove two special cases: that if the carrier is the entire plane
then the WRW is recurrent, and that if the carrier is bounded then
the WRW is transient. In section \ref{sec: Some-Integration-Lemmas}
we prove some lemmas on integration of harmonic functions needed for
the proof of Theorem \ref{thm: main theorem-1}, which is proven itself
in section \ref{sec:The-General-Case}. Finally, in section \ref{sec:Extension-to-higher},
we propose an analogue of the Dubejko weights for higher dimensions,
and prove they make the weighted random walk into a martingale.

\section{Some Geometric Lemmas\label{sec: Some-Geometric-Lemmas}}
\begin{rem}
In the context of a circle packing, we use interchangeably the vertices
of the tangency graph and the centers of their corresponding circles
in $\mathbb{R}^{2}$.
\end{rem}

\begin{defn}
Let $\left\{ C_{v}\right\} _{v\in V}$ be a circle-packing of an infinite
triangulation. For a vertex $v\in V$, the \textbf{polygon of $v$}
denoted by $P_{v}$ is the polygon whose sides are the dual edges
to all the edges incident to $v$.
\end{defn}

\begin{rem}
$P_{v}$ is convex, its vertices are the incenters of the faces incident
to $v$ and the circle $C_{v}$ is inscribed in it.
\end{rem}

\noindent We now provide our main probabilistic motivation for looking
at the Dubejko weights, taken from \cite{dubejko1997random}. An analogue
for higher dimensions of this theorem is proven in Proposition \ref{prop:higher dimensions martingale theorem}.
\begin{thm}
\label{Theorem: Dubejko Weights are a Martingale}Let $\left\{ C_{v}\right\} _{v\in V}$
be a circle-packed infinite triangulation. Let $\left(Z_{n}\right)_{n\in\mathbb{N}}$
be the sequence of vertices visited during a Dubejko-weighted random
walk on $\left\{ C_{v}\right\} _{v\in V}$. Then $\left(Z_{n}\right)_{n\in\mathbb{N}}$
is a martingale.
\end{thm}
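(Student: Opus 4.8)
The plan is to use the Markov property to reduce the martingale identity to a single geometric statement at each vertex, and then prove that statement by a polygon-closure argument exploiting the orthogonality of primal and dual edges. First, since the walk is a Markov chain on a locally finite graph started from a fixed vertex, at each time $n$ the position $Z_n$ takes only finitely many values and is therefore integrable, so it suffices to verify $\mathbb{E}\left[Z_{n+1}\mid Z_n=v\right]=v$ for every $v$. Writing the transition probabilities explicitly via $P_{v,u}=c_{vu}/\pi(v)$ and identifying each vertex with the center of its circle in $\mathbb{R}^2$, this identity becomes $\sum_{u\sim v}c_{vu}(u-v)=0$.

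Next I would rewrite each summand geometrically. For an edge $e=uv$, the segment joining the centers has length $|e|$, so $c_{vu}(u-v)=\frac{|e^{\dagger}|}{|e|}(u-v)=|e^{\dagger}|\,\hat{e}$, where $\hat{e}$ is the unit vector pointing from $v$ to $u$. Thus the quantity to control is $\sum_{e\ni v}|e^{\dagger}|\,\hat{e}$. The key step is to recognize the boundary of the polygon $P_v$: its sides are precisely the dual edges $e^{\dagger}$ of the edges incident to $v$, traversed in cyclic order, so as a closed polygon the sum of its side vectors vanishes. Because each dual edge $e^{\dagger}$ is orthogonal to its primal edge $e$, a single fixed rotation by $90^{\circ}$ carries every side vector of $P_v$ to a vector of the same length $|e^{\dagger}|$ pointing along the primal direction; since $C_v$ is inscribed in $P_v$, the outward normal from the center $v$ to each tangent side is $\hat{e}$, which pins down the sign. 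Applying this rotation to the vanishing sum of side vectors yields $\sum_{e\ni v}|e^{\dagger}|\,\hat{e}=0$, which is exactly the desired identity, and the theorem follows.

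The main obstacle is the orientation bookkeeping in the rotation step: one must check that a single $90^{\circ}$ rotation simultaneously sends every side of $P_v$, traversed consistently around its boundary, to the corresponding outward primal direction with no sign flips. This is precisely where the convexity of $P_v$ and the fact that $C_v$ is inscribed in it are essential, since they guarantee that the perpendicular from $v$ to each side agrees in orientation with the rotated traversal direction uniformly over all incident edges.
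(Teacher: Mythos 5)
Your proposal is correct and follows essentially the same route as the paper: both reduce the martingale property to the identity $\sum_{u\sim v}c_{vu}\left(u-v\right)=0$ and then establish it by applying a fixed $90^{\circ}$ rotation that exchanges primal and dual directions, so that the sum becomes (up to rotation) the vanishing sum of the side vectors of the closed convex polygon $P_{v}$. Your extra care about integrability and about the consistency of orientations is welcome but does not change the argument, which is the one given in the paper's Theorem \ref{Theorem: Dubejko Weights are a Martingale}.
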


\begin{proof}
Set $\pi\left(x\right)=\sum_{y\sim x}c_{xy}$ and denote the transition
probabilities by $p_{xy}=\frac{c_{xy}}{\pi\left(x\right)}$. Let $v\in V$,
and let $u_{1},u_{2},...,u_{n}\in V$ be its neighbors in $G$. For
each $i\in\left\{ 1,2,...,n\right\} $ let $\overrightarrow{e_{i}}=u_{i}-v$
be the edge $vu_{i}$ oriented from $v$ to $u_{i}$. Let $R:\mathbb{R}^{2}\rightarrow\mathbb{R}^{2}$
be the linear clockwise rotation by $\frac{\pi}{2}$ radians, and
let $\overrightarrow{f_{i}}$ be the dual edge of $\overrightarrow{e_{i}}$
oriented in a direction such that $R\left(\widehat{e_{i}}\right)=\widehat{f_{i}}$,
where $\widehat{e_{i}}$ and $\widehat{f_{i}}$ are the respective
unit vectors of $\overrightarrow{e_{i}}$ and $\overrightarrow{f_{i}}$.\\
It is enough to show that $\sum_{i=1}^{n}p_{vu_{i}}\overrightarrow{e_{i}}=0$.
We have:
\begin{align*}
R\left(\sum_{i=1}^{n}p_{vu_{i}}\overrightarrow{e_{i}}\right) & =R\left(\frac{1}{\pi\left(v\right)}\sum_{i=1}^{n}\frac{\left\Vert \overrightarrow{f_{i}}\right\Vert }{\left\Vert \overrightarrow{e_{i}}\right\Vert }\cdot\overrightarrow{e_{i}}\right)=\frac{1}{\pi\left(v\right)}\sum_{i=1}^{n}\left\Vert \overrightarrow{f_{i}}\right\Vert R\left(\widehat{e_{i}}\right)=\frac{1}{\pi\left(v\right)}\sum_{i=1}^{n}\overrightarrow{f_{i}}.
\end{align*}
Since $\overrightarrow{f_{1}},\overrightarrow{f_{2}},...,\overrightarrow{f_{n}}$
trace a closed curve along the boundary of the polygon $P_{v}$, their
sum vanishes. Since $R$ is injective, the Theorem follows.
\end{proof}
\noindent We continue by analyzing some of the properties of the Dubejko
weights. Since the weights are defined using incircles, we will need
the following formula for their radius:
\begin{prop}
\label{prop: incircle radius formula}Let $x,y,z\in\mathbb{R}^{2}$
be centers of mutually tangent circles with respective radii $r_{x},r_{y},r_{z}$.
Then the inradius of the triangle $xyz$ is:
\[
r=\sqrt{\frac{r_{x}r_{y}r_{z}}{r_{x}+r_{y}+r_{z}}}.
\]
\end{prop}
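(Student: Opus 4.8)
The plan is to reduce this to two classical facts about triangles: the relation $r = \mathrm{Area}/s$ between the inradius, the area, and the semiperimeter, together with Heron's formula for the area. The whole proof hinges on translating the tangency condition into the side lengths. Since the three circles are mutually tangent, the distance between any two centers equals the sum of the corresponding radii, so the side lengths of the triangle $xyz$ are $|yz| = r_y + r_z$, $|xz| = r_x + r_z$, and $|xy| = r_x + r_y$.

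First I would compute the semiperimeter. Summing the three side lengths and halving gives the pleasingly simple
\[
s = \frac{(r_y+r_z)+(r_x+r_z)+(r_x+r_y)}{2} = r_x + r_y + r_z.
\]
The crucial observation is then that each of the quantities $s$ minus a side length telescopes to a single radius: $s - |yz| = r_x$, $s - |xz| = r_y$, and $s - |xy| = r_z$. This is the identity that makes everything collapse cleanly.

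Next I would invoke Heron's formula, $\mathrm{Area} = \sqrt{s\,(s-|yz|)(s-|xz|)(s-|xy|)}$, and substitute the expressions just found to obtain $\mathrm{Area} = \sqrt{(r_x+r_y+r_z)\,r_x r_y r_z}$. Finally, using $r = \mathrm{Area}/s$ and dividing by $s = r_x+r_y+r_z$ under the square root yields
\[
r = \frac{\sqrt{(r_x+r_y+r_z)\,r_x r_y r_z}}{r_x+r_y+r_z} = \sqrt{\frac{r_x r_y r_z}{r_x+r_y+r_z}},
\]
which is the claimed formula.

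There is no real obstacle here; the computation is entirely routine once the tangency condition is used to express the side lengths. The only point requiring a moment's care is recognizing the telescoping $s - |yz| = r_x$ and so on, which is precisely what allows Heron's formula to simplify into the stated closed form rather than an unwieldy expression. One should also note in passing that the triangle inequality is automatically satisfied (each side is a sum of two positive radii), so the configuration is always a genuine nondegenerate triangle and the square root is well defined.
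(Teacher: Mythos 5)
Your proof is correct and follows essentially the same route as the paper: both compute the area via Heron's formula and via the inradius--semiperimeter relation (which the paper derives by splitting the triangle into three sub-triangles of altitude $r$), then equate and solve for $r$.
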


\begin{proof}
We calculate the the area $A$ of the triangle $xyz$ in two ways.
On the one hand, the line segments connecting the incenter to $x,y,z$
divide the triangle into 3 triangles with altitudes $r$, and so:
\[
A=r\cdot\left(r_{x}+r_{y}+r_{z}\right).
\]
On the other hand, by Heron's formula we know:
\[
A=\sqrt{\left(r_{x}+r_{y}+r_{z}\right)r_{x}r_{y}r_{z}}.
\]
Equating the two above and solving for $r$ finishes the proof.
\end{proof}
The weights of the Dubejko-weighted random walk can be directly expressed
using the radii by the following formula, appearing in \cite{stephenson2005introduction}
after Theorem 18.3:
\begin{prop}
\label{Prop: Formula for Dubejko Weights}Let $\left\{ C_{v}\right\} _{v\in V}$
be a circle-packed infinite triangulation, and let $e=uv\in E$. Let
$w_{1},w_{2}\in V$ be the two vertices such that for every $i\in\left\{ 1,2\right\} $,
$w_{i}$ forms a face with $u$ and $v$. Then the weight $c_{e}$
is:
\[
c_{e}=\frac{\sqrt{r_{u}r_{v}}}{r_{u}+r_{v}}\cdot\left(\sqrt{\frac{R_{1}}{r_{u}+r_{v}+R_{1}}}+\sqrt{\frac{R_{2}}{r_{u}+r_{v}+R_{2}}}\right),
\]
where $r_{u},r_{v},R_{1},R_{2}$ are the radii of $C_{u},C_{v},C_{w_{1}},C_{w_{2}}$
respectively.
\end{prop}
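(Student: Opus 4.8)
The plan is to compute the weight $c_e = |e^{\dagger}|/|e|$ directly by finding the length of the dual edge $e^{\dagger}$, which connects the incenters of the two faces incident to $e=uv$. First I would observe that $|e|$, the distance between the centers of the two tangent circles $C_u$ and $C_v$, is simply $r_u + r_v$, since tangent circles have their centers separated by the sum of their radii. This gives the denominator immediately and explains the factor involving $r_u + r_v$.

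Next I would handle the dual edge length. The dual edge $e^{\dagger}$ crosses $e$ orthogonally at the tangency point of $C_u$ and $C_v$, splitting into two segments, one running from the tangency point to the incenter of the face $uvw_1$ and the other to the incenter of the face $uvw_2$. So $|e^{\dagger}|$ is the sum of these two signed distances, and it suffices to compute, for a single face with third vertex $w$ of radius $R$, the distance from the tangency point of $C_u,C_v$ to the incenter of that triangle, measured along the line through $e$ perpendicular to $e$. The key geometric fact I would use is that the incircle of the triangle $uvw$ touches each side, and in particular it is tangent to the side $uv$; since the circles $C_u, C_v, C_w$ are mutually tangent and inscribed appropriately, the foot of the perpendicular from the incenter to $uv$ is exactly the tangency point of $C_u$ and $C_v$. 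Hence this perpendicular distance is precisely the inradius $r$ of triangle $uvw$, which Proposition \ref{prop: incircle radius formula} gives as $r = \sqrt{r_u r_v R/(r_u+r_v+R)}$.

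Combining these, the length $|e^{\dagger}|$ is the sum of the two inradii coming from the faces $uvw_1$ and $uvw_2$, namely
\[
|e^{\dagger}| = \sqrt{\frac{r_u r_v R_1}{r_u+r_v+R_1}} + \sqrt{\frac{r_u r_v R_2}{r_u+r_v+R_2}}.
\]
Dividing by $|e| = r_u + r_v$ and factoring out $\sqrt{r_u r_v}$ from each summand then yields
\[
c_e = \frac{\sqrt{r_u r_v}}{r_u+r_v}\left(\sqrt{\frac{R_1}{r_u+r_v+R_1}} + \sqrt{\frac{R_2}{r_u+r_v+R_2}}\right),
\]
which is the claimed formula.

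The main obstacle I expect is justifying the geometric claim that the foot of the perpendicular from the incenter of $uvw$ onto the side $uv$ lands exactly at the tangency point of $C_u$ and $C_v$, and that the two segments of $e^{\dagger}$ lie on opposite sides of $e$ so that their lengths genuinely add rather than subtract. This rests on the fact that for three mutually tangent circles the tangency points coincide with the points where the incircle of the triangle of centers meets its sides — a standard but worth-stating property following from equating tangent-segment lengths from each vertex. Once this alignment is established, the rest is the routine substitution from Proposition \ref{prop: incircle radius formula}.
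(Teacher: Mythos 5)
Your proposal is correct and follows essentially the same route as the paper: both compute $|e|=r_u+r_v$ and identify $|e^{\dagger}|$ as the sum of the two inradii via Proposition \ref{prop: incircle radius formula}. You simply make explicit the standard geometric fact (that the incircle of the triangle of centers touches each side at the circles' tangency point) which the paper takes for granted.
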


\begin{proof}
For $i\in\left\{ 1,2\right\} $ set $r_{uvw_{i}}$ to be the inradius
of the triangle $uvw_{i}$. Using Proposition \ref{prop: incircle radius formula},
we can write:
\[
c_{e}=\frac{\left|e^{\dagger}\right|}{\left|e\right|}=\frac{r_{uvw_{1}}+r_{uvw_{2}}}{r_{u}+r_{v}}=\frac{\sqrt{r_{u}r_{v}}}{r_{u}+r_{v}}\cdot\left(\sqrt{\frac{R_{1}}{r_{u}+r_{v}+R_{1}}}+\sqrt{\frac{R_{2}}{r_{u}+r_{v}+R_{2}}}\right).
\]
\end{proof}
\begin{lem}
\label{cla:two radii are larger}Let $x,y,z\in\mathbb{R}^{2}$ be
centers of mutually tangent circles with radii $r_{x},r_{y},r_{z}$,
and let $r$ be the inradius of the triangle $xyz$. Then:
\[
r<\min\left\{ \sqrt{r_{x}r_{y}},\sqrt{r_{x}r_{z}},\sqrt{r_{y}r_{z}}\right\} .
\]
\end{lem}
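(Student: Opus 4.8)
We need to show that for three mutually tangent circles with radii $r_x, r_y, r_z$, the inradius $r$ of the triangle connecting centers satisfies $r < \min\{\sqrt{r_x r_y}, \sqrt{r_x r_z}, \sqrt{r_y r_z}\}$.

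From Proposition (incircle radius formula), we have:
$$r = \sqrt{\frac{r_x r_y r_z}{r_x + r_y + r_z}}$$

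So I need to show:
$$\sqrt{\frac{r_x r_y r_z}{r_x + r_y + r_z}} < \sqrt{r_x r_y}$$

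and similarly for the other two.

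**Let me verify the first inequality.** Squaring both sides (valid since both positive):
$$\frac{r_x r_y r_z}{r_x + r_y + r_z} < r_x r_y$$

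Dividing both sides by $r_x r_y$ (positive):
$$\frac{r_z}{r_x + r_y + r_z} < 1$$

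This is equivalent to:
$$r_z < r_x + r_y + r_z$$

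which simplifies to:
$$0 < r_x + r_y$$

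This is obviously true since $r_x, r_y > 0$.

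**So the proof is essentially immediate.** Let me check all three cases:
- $r < \sqrt{r_x r_y} \iff \frac{r_z}{r_x+r_y+r_z} < 1 \iff 0 < r_x + r_y$ ✓
- $r < \sqrt{r_x r_z} \iff \frac{r_y}{r_x+r_y+r_z} < 1 \iff 0 < r_x + r_z$ ✓
- $r < \sqrt{r_y r_z} \iff \frac{r_x}{r_x+r_y+r_z} < 1 \iff 0 < r_y + r_z$ ✓

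All three hold trivially.

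**The approach is extremely clean** - just use the formula and reduce each inequality.

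---

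The plan is to use the explicit formula for the inradius from Proposition~\ref{prop: incircle radius formula}, namely $r = \sqrt{\frac{r_x r_y r_z}{r_x + r_y + r_z}}$, and show that each of the three terms in the minimum dominates $r$. Since all quantities involved are positive, I can work with the squared inequalities throughout, which turns each comparison into an elementary algebraic statement.

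First I would establish the inequality $r < \sqrt{r_x r_y}$. Squaring both sides and substituting the formula gives the equivalent claim $\frac{r_x r_y r_z}{r_x + r_y + r_z} < r_x r_y$. Dividing through by the positive quantity $r_x r_y$ reduces this to $\frac{r_z}{r_x + r_y + r_z} < 1$, which is equivalent to $0 < r_x + r_y$. This holds trivially because radii are strictly positive. The same argument, with the roles of the radii permuted, handles the remaining two inequalities $r < \sqrt{r_x r_z}$ and $r < \sqrt{r_y r_z}$, reducing them to $0 < r_x + r_z$ and $0 < r_y + r_z$ respectively.

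There is no real obstacle here: the content of the lemma is entirely captured by the inradius formula, and the strict positivity of the radii makes each reduced inequality immediate. The only point requiring a moment of care is confirming that squaring preserves the inequality, which is justified since both sides are positive, and that each of the three symmetric cases follows from the same one-line computation under the appropriate relabeling.

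\begin{proof}
By Proposition~\ref{prop: incircle radius formula}, the inradius is $r = \sqrt{\frac{r_x r_y r_z}{r_x + r_y + r_z}}$. Since all quantities are positive, it suffices to verify the corresponding squared inequalities. For the first term in the minimum, $r^2 < r_x r_y$ is equivalent to $\frac{r_x r_y r_z}{r_x + r_y + r_z} < r_x r_y$, which upon dividing by $r_x r_y$ becomes $\frac{r_z}{r_x + r_y + r_z} < 1$, i.e. $0 < r_x + r_y$. This holds because radii are strictly positive. Permuting the roles of the radii, the inequalities $r^2 < r_x r_z$ and $r^2 < r_y r_z$ reduce identically to $0 < r_x + r_z$ and $0 < r_y + r_z$, both of which hold. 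Taking square roots gives $r < \min\left\{\sqrt{r_x r_y}, \sqrt{r_x r_z}, \sqrt{r_y r_z}\right\}$, as claimed.
\end{proof}
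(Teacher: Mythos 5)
Your proof is correct and takes essentially the same route as the paper: both start from the inradius formula of Proposition~\ref{prop: incircle radius formula} and observe that the factor $\frac{r_z}{r_x+r_y+r_z}$ (and its permutations) is strictly less than $1$. The paper factors $r=\sqrt{r_xr_y}\cdot\sqrt{\tfrac{r_z}{r_x+r_y+r_z}}$ while you square the inequality, but this is only a cosmetic difference.
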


\begin{proof}
By Proposition \ref{prop: incircle radius formula}, we can write:
\[
r=\sqrt{r_{x}r_{y}}\cdot\sqrt{\frac{r_{z}}{r_{x}+r_{y}+r_{z}}}<\sqrt{r_{x}r_{y}}.
\]
A similar calculation replacing the role of $z$ with $x,y$ shows
$r<\sqrt{r_{y}r_{z}},\ r<\sqrt{r_{x}r_{z}}$, finishing the proof.
\end{proof}
\begin{prop}
Let $\left\{ C_{v}\right\} _{v\in V}$ be a circle-packed infinite
triangulation, and $G=\left(V,E\right)$ its tangency graph. Then
for every edge $e\in E$ the weight is bounded from above by:\label{claim:weightsBoundedAbove}
\[
c_{e}<1.
\]
\end{prop}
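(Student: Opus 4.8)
The goal is to show $c_e < 1$ for every edge. The cleanest route is to start from the explicit formula just established in Proposition \ref{Prop: Formula for Dubejko Weights}:
$$c_e = \frac{\sqrt{r_u r_v}}{r_u + r_v}\left(\sqrt{\frac{R_1}{r_u+r_v+R_1}} + \sqrt{\frac{R_2}{r_u+r_v+R_2}}\right),$$
and bound each piece. Let me think about what factors are naturally controlled.

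The term $\frac{\sqrt{r_u r_v}}{r_u+r_v}$ is at most $\frac{1}{2}$ by AM-GM. Each square root $\sqrt{\frac{R_i}{r_u+r_v+R_i}}$ is strictly less than $1$. So the product is bounded by $\frac{1}{2}(1+1) = 1$, giving $c_e < 1$. Let me verify this is actually strict.

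Let me reconsider.
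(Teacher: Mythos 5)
Your argument is correct and is essentially the paper's own proof: the paper bounds each inradius $r_{uvw_i}<\sqrt{r_u r_v}$ (via Lemma \ref{cla:two radii are larger}, which is precisely your observation that each factor $\sqrt{R_i/(r_u+r_v+R_i)}$ is strictly less than $1$) and then applies AM--GM to $\frac{2\sqrt{r_u r_v}}{r_u+r_v}\leq 1$. The strictness you were checking does hold, since it comes from the strict inequality in those square-root factors rather than from AM--GM.
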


\begin{proof}
Write $e=uv$ for $u,v\in V$, and let $w_{1},w_{2}\in V$ be the
two vertices forming a face with $u,v$, as in Figure \ref{fig: drawing for definition of Dubejko weights}.
Set $r_{u},r_{v}$ to be the radii of $C_{u},C_{v}$ and for each
$i\in\left\{ 1,2\right\} $ set $r_{uvw_{i}}$ to be the inradius
of the triangle $uvw_{i}$. The edge $e$ divides the dual edge $e^{\dagger}$
into two line segments of lengths $r_{uvw_{1}}$ and $r_{uvw_{2}}$.
Using Lemma \ref{cla:two radii are larger} and the AM-GM inequality
we bound:
\[
c_{e}=\frac{\left|e^{\dagger}\right|}{\left|e\right|}=\frac{r_{uvw_{1}}+r_{uvw_{2}}}{r_{u}+r_{v}}<\frac{2\sqrt{r_{u}r_{v}}}{r_{u}+r_{v}}\leq\frac{r_{u}+r_{v}}{r_{u}+r_{v}}=1.
\]
\end{proof}
A key result in the theory of circle packings is the famous Ring Lemma,
proven by Rodin and Sullivan in \cite{rodin1987convergence}:
\begin{lem}
\textbf{(Ring Lemma)}: \label{lem:(Ring-Lemma):-For}For each $d\in\mathbb{N}$,
there exists some $r=r\left(d\right)>0$ such that if a unit circle
is surrounded by $d$ circles forming a cycle externally tangent to
it, as in Figure \ref{fig: Ring Lemma Diagram}, then the radius of
each of the $d$ circles is larger than $r$.\\
\end{lem}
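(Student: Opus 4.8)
The plan is to reduce everything to one-dimensional bookkeeping of the angles subtended at the center of the unit circle, and then to isolate precisely where planarity (disjointness of the circles) must enter.

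First I would normalize: place the unit circle $C_{0}$ at the origin $O$ and let $C_{1},\dots,C_{d}$ be the surrounding circles in cyclic order, with radii $\rho_{1},\dots,\rho_{d}$, each externally tangent to $C_{0}$ and to its two cyclic neighbors. External tangency to $C_{0}$ puts the center $p_{i}$ at distance $1+\rho_{i}$ from $O$, and tangency of $C_{i},C_{i+1}$ gives $|p_{i}-p_{i+1}|=\rho_{i}+\rho_{i+1}$. Writing $\theta_{i}=\angle p_{i}Op_{i+1}$ and applying the law of cosines in the triangle $Op_{i}p_{i+1}$, a short computation collapses to the clean identity $\sin^{2}(\theta_{i}/2)=\frac{\rho_{i}}{1+\rho_{i}}\cdot\frac{\rho_{i+1}}{1+\rho_{i+1}}$. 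Setting $m_{i}=\sqrt{\rho_{i}/(1+\rho_{i})}\in(0,1)$, this reads $\sin(\theta_{i}/2)=m_{i}m_{i+1}$, and since the centers wind exactly once around $O$, we have $\sum_{i=1}^{d}\theta_{i}=2\pi$.

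These two facts already give a necessary condition: convexity of $\arcsin$ yields $\arcsin x\le\frac{\pi}{2}x$ on $[0,1]$, so $\pi=\sum_{i}\arcsin(m_{i}m_{i+1})\le\frac{\pi}{2}\sum_{i}m_{i}m_{i+1}$, i.e. $\sum_{i=1}^{d}m_{i}m_{i+1}\ge2$, which shows the radii cannot all be small at once. I would stress, however, that this is \emph{not} enough to bound an individual radius: a single $m_{i}$ can be tiny while the others stay bounded away from $0$ without violating either the identity or the angle sum. The missing ingredient is that non-adjacent circles must \emph{also} have disjoint interiors. Exploiting this, for any $i$ the two neighbors $C_{i-1},C_{i+1}$ are separated by angle $\theta_{i-1}+\theta_{i}$, and the disjointness inequality $|p_{i-1}-p_{i+1}|\ge\rho_{i-1}+\rho_{i+1}$ translates (by the same algebra as the identity) into $\sin\left(\frac{\theta_{i-1}+\theta_{i}}{2}\right)\ge m_{i-1}m_{i+1}$. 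Combining this with the elementary bound $\sin(a+b)\le\sin a+\sin b$ and the identity produces the remarkably simple relation $\frac{1}{m_{i}}\le\frac{1}{m_{i-1}}+\frac{1}{m_{i+1}}$ for every $i$. In words, the reciprocals $x_{i}=1/m_{i}$ obey a discrete convexity-type inequality around the cycle, so smallness of a radius is forced to propagate to a neighbor: a very small $C_{i}$ demands a comparably small $C_{i-1}$ or $C_{i+1}$. Chaining this around the finite cycle of length $d$ would force all the $m_{i}$ to be small, contradicting $\sum_{i}m_{i}m_{i+1}\ge2$, and yield the desired $r=r(d)>0$.

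The hard part will be making this propagation quantitative and uniform in $d$: I must convert ``smallness propagates to one neighbor'' into an explicit lower bound, controlling the loss incurred at each of the up to $d$ steps and checking the chain cannot dissipate the smallness before it closes up. I also need to handle configurations with large radii, where some $\theta_{i}$ may exceed $\pi$ and the sine estimates must be replaced by the trivial observation that far-apart centers impose no constraint. This quantitative chaining is exactly the content of the Rodin--Sullivan argument; I expect the cleanest route is to argue by contradiction with $C_{j}$ a radius-minimizing circle and to track the reciprocals $x_{i}$ explicitly, the extremal case being the symmetric one in which all $d$ circles are equal.
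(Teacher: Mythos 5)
The paper itself does not prove the Ring Lemma --- it cites Rodin and Sullivan --- so your attempt stands or falls on its own. Your preparatory computations are all correct: the law of cosines in $Op_ip_{i+1}$ does give $\sin(\theta_i/2)=m_im_{i+1}$ with $m_i=\sqrt{\rho_i/(1+\rho_i)}$, convexity of $\arcsin$ does give $\sum_i m_im_{i+1}\ge 2$, and disjointness of $C_{i-1},C_{i+1}$ does yield $1/m_i\le 1/m_{i-1}+1/m_{i+1}$. The gap is in the final chaining step, and it is not merely a matter of making the propagation quantitative: the propagation stops after one step. Writing $x_i=1/m_i$, the relation $x_i\le x_{i-1}+x_{i+1}$ forces a large $x_j$ to have one large neighbour, but once two adjacent reciprocals $x_j,x_{j+1}$ are large, the inequality $x_{j+1}\le x_j+x_{j+2}$ holds no matter how small $x_{j+2}$ is, so nothing propagates further. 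Concretely, for $d=7$ take $m_1=m_2=1/N$ with $N$ arbitrarily large and $m_3=\dots=m_7=c$, where $c\in(0,1)$ is tuned so that $\sum_i\arcsin(m_im_{i+1})=\pi$ exactly (then $c^2\to\sin(\pi/4)$ as $N\to\infty$). Every inequality $1/m_i\le 1/m_{i-1}+1/m_{i+1}$ holds, the angle sum is exactly $2\pi$, and $\sum_i m_im_{i+1}\to 2\sqrt{2}\ge 2$. So the full set of conditions you derived is consistent with an arbitrarily small circle, and no contradiction can be extracted from them alone.

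What the reduction is missing is the disjointness of circles that are \emph{farther} apart in the cycle, which in your notation reads $\sin(s_{ab}/2)\ge m_am_b$ with $s_{ab}$ the total angle from $p_a$ to $p_b$. In the configuration above the pair $(3,7)$ violates this: the three tiny angles $\theta_7,\theta_1,\theta_2$ force $s_{37}$ to be nearly $2\pi$, i.e.\ $C_3$ and $C_7$ would have to sit at essentially the same angular position while both have radius bounded below, so they overlap. This is the true geometric mechanism of the Ring Lemma --- a very small $C_j$ squeezes its two neighbours almost together, and the rest of the chain has nowhere to go --- so a correct proof along your lines must invoke the non-adjacency constraints for all pairs, not just next-nearest neighbours, and then the chaining really does need the quantitative induction you defer (each step of the return chain losing at most a bounded factor in radius, e.g.\ via Descartes' Theorem \ref{thm:(Descartes'-Theorem)}, which is where the Fibonacci-type bound comes from). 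As written, the proof is incomplete at exactly the step that carries the content of the lemma.
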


\begin{figure}[H]
\includegraphics[scale=0.45]{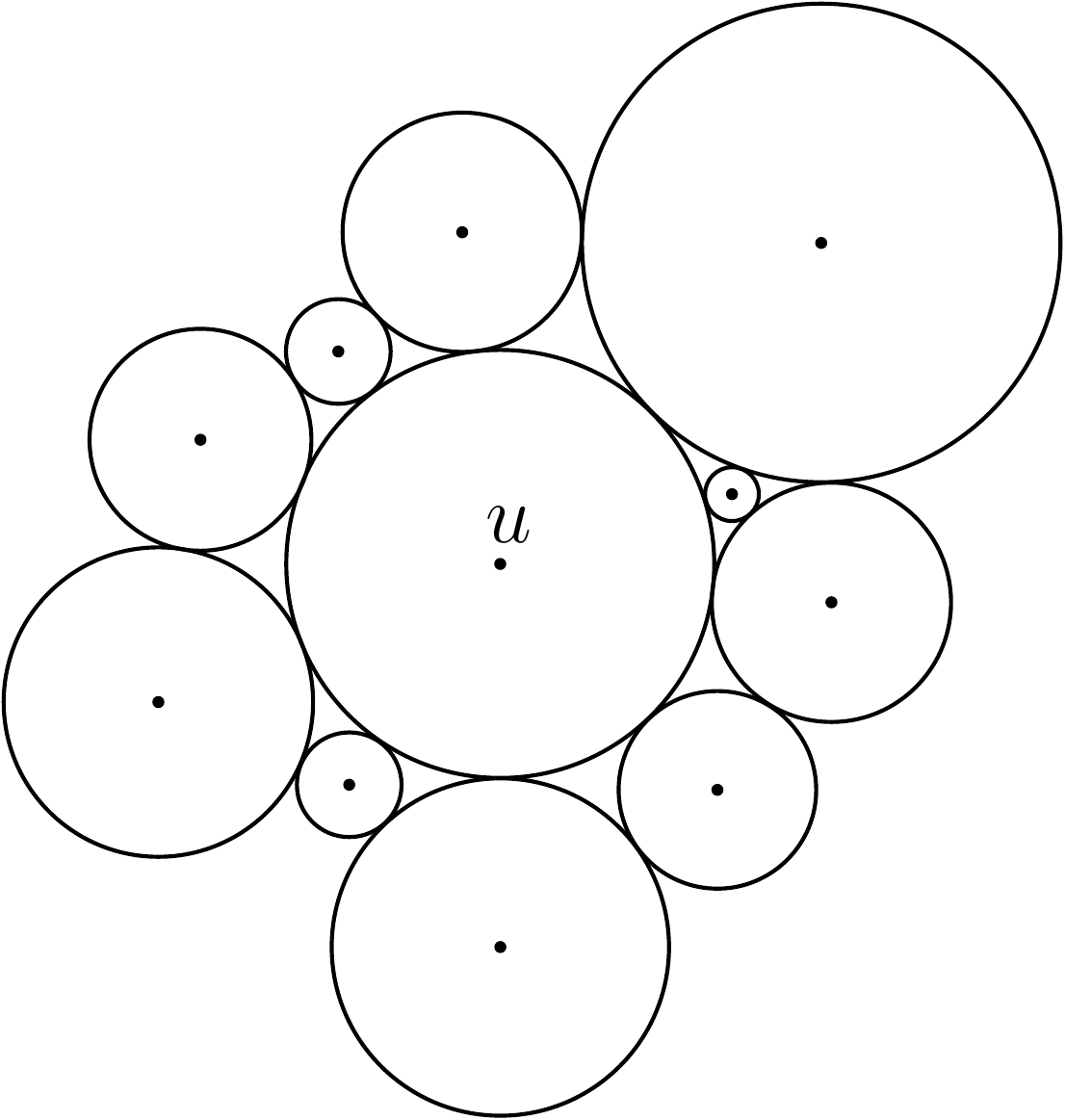}\caption{\label{fig: Ring Lemma Diagram}Circle $u$ surrounded by $d=10$
circles externally tangent to it}
\end{figure}
\noindent The following consequence of the Ring Lemma shows that under
the bounded degree assumption, since the radii of the circles around
a given circle cannot be too small, the corresponding edge weights
cannot be too small either:
\begin{prop}
\label{claim: weights bounded from below if bounded degree}Let $\left\{ C_{v}\right\} _{v\in V}$
be a circle-packed infinite triangulation and $G=\left(V,E\right)$
its tangency graph. If $G$ has degree bounded by $d$, then the weights
are bounded from below by some constant $c_{0}=c_{0}\left(d\right)>0$.
\end{prop}

\begin{proof}
Under the conditions of the Proposition, each circle in the packing
is surrounded by a cycle of length at most $d$ of externally tangent
circles. Thus, for each pair of adjacent circles, we can apply the
Ring Lemma \ref{lem:(Ring-Lemma):-For} in the wanted direction and
deduce that the ratio of their respective radii is smaller than some
global $M=M\left(d\right)>0$ (which can be taken to be the inverse
of $r\left(d\right)$ for example). \\
Let $e=uv\in E$ be an edge and let $w_{1},w_{2}$ the two vertices
such that $uvw_{1}$ and $uvw_{2}$ are faces of the circle packing's
drawing. Then using the formula from Proposition \ref{Prop: Formula for Dubejko Weights}
for the weights we bound:
\begin{align*}
c_{e} & =\sqrt{\frac{r_{u}}{r_{u}+r_{v}}}\cdot\sqrt{\frac{r_{v}}{r_{u}+r_{v}}}\cdot\left(\sqrt{\frac{R_{1}}{r_{u}+r_{v}+R_{1}}}+\sqrt{\frac{R_{2}}{r_{u}+r_{v}+R_{2}}}\right)\geq\\
 & \geq\sqrt{\frac{r_{u}}{r_{u}+Mr_{u}}}\cdot\sqrt{\frac{r_{v}}{Mr_{v}+r_{v}}}\cdot\left(\sqrt{\frac{R_{1}}{MR_{1}+MR_{1}+R_{1}}}+\sqrt{\frac{R_{2}}{MR_{2}+MR_{2}+R_{2}}}\right)=\\
 & =\frac{2}{M+1}\sqrt{\frac{1}{2M+1}},
\end{align*}
so we can take $c_{0}=\frac{2}{M+1}\sqrt{\frac{1}{2M+1}}$.
\end{proof}
\begin{prop}
Let $\left\{ C_{v}\right\} _{v\in V}$ be a circle-packed infinite
triangulation and $G=\left(V,E\right)$ its tangency graph, and let
$\left\{ c_{e}\right\} _{e\in E}$ be the Dubejko weights induced
by the circle packing. Then for any vertex $v\in V$ the sum of the
weights around $v$ is bounded by a constant: \label{claim: sum of weights bounded}
\[
\sum_{u\sim v}c_{vu}<2\pi.
\]
\end{prop}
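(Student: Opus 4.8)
The plan is to assign to each edge $e=vu$ incident to $v$ the angle $\theta_e$ that its dual edge $e^{\dagger}$ subtends at the center $v$, to observe that these angles sum to exactly $2\pi$, and then to establish the pointwise bound $c_{vu}<\theta_e$. Summing over the neighbors then gives $\sum_{u\sim v}c_{vu}<\sum_{u\sim v}\theta_e=2\pi$.

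First I would set up the geometry. As noted in the proof of Proposition \ref{claim:weightsBoundedAbove}, the dual edge $e^{\dagger}$ is perpendicular to $e$, passes through the tangency point $t$ of $C_u$ and $C_v$, and is divided by $t$ into two segments of lengths $r_{uvw_1}$ and $r_{uvw_2}$, the inradii of the two incident faces, whose incenters lie on opposite sides of the line through $u$ and $v$. Since $\left|vt\right|=r_v$ and $e\perp e^{\dagger}$, the point $t$ is the foot of the perpendicular from $v$ to the line containing $e^{\dagger}$, and it lies strictly between the two endpoints of $e^{\dagger}$. Splitting the angle at $v$ along $vt$ into two right triangles therefore gives
\[
\theta_e=\arctan\frac{r_{uvw_1}}{r_v}+\arctan\frac{r_{uvw_2}}{r_v}.
\]
Because $C_v$ is inscribed in the convex polygon $P_v$, the center $v$ lies in the interior of $P_v$, so the angles subtended at $v$ by the consecutive sides $e^{\dagger}$ of $P_v$ partition the full angle around $v$, whence $\sum_{u\sim v}\theta_e=2\pi$.

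The heart of the matter is the pointwise inequality $c_{vu}<\theta_e$. Writing $c_{vu}=\frac{r_{uvw_1}+r_{uvw_2}}{r_u+r_v}$, it suffices to prove, for each of the two faces, that $\arctan\frac{r_{uvw_i}}{r_v}>\frac{r_{uvw_i}}{r_u+r_v}$. Setting $x=r_{uvw_i}/r_v$, I would use the elementary inequality $\arctan x>\frac{x}{1+x^2}$ for $x>0$ (the difference vanishes at $0$ and has positive derivative $\frac{2x^2}{(1+x^2)^2}$), together with Lemma \ref{cla:two radii are larger}, which gives $r_{uvw_i}<\sqrt{r_u r_v}$, i.e. $x^2<r_u/r_v$. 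The latter yields $1+x^2<1+r_u/r_v$, hence $\frac{x}{1+x^2}>\frac{x}{1+r_u/r_v}=\frac{r_{uvw_i}}{r_u+r_v}$, and chaining the two inequalities gives the claim. Adding the contributions of the two faces produces $c_{vu}<\theta_e$, and summing over all neighbors $u$ of $v$ completes the proof.

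I expect the main obstacle to be bookkeeping rather than analysis: one must verify that the foot $t$ falls strictly between the endpoints of $e^{\dagger}$, so that the subtended angle is the \emph{sum} (and not a difference) of the two arctangents, and that $v$ genuinely lies in the interior of the convex polygon $P_v$, so that the subtended angles close up to exactly $2\pi$. Once the correct elementary estimate $\arctan x>\frac{x}{1+x^2}$ is paired with the inradius bound $x^2<r_u/r_v$ coming from Lemma \ref{cla:two radii are larger}, all the inequalities point in the same direction and the remaining computation is routine.
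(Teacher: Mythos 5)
Your proof is correct, but it takes a genuinely different route from the paper's. You partition the full angle $2\pi$ around $v$ \emph{edge by edge}: each dual edge $e^{\dagger}$ is a side of the convex polygon $P_v$, which contains $v$ in its interior, so the subtended angles $\theta_e=\arctan\left(r_{uvw_1}/r_v\right)+\arctan\left(r_{uvw_2}/r_v\right)$ sum to exactly $2\pi$; you then prove the pointwise bound $c_{vu}<\theta_e$ by combining $\arctan x>x/(1+x^2)$ with the inradius estimate $r_{uvw_i}<\sqrt{r_ur_v}$ of Lemma \ref{cla:two radii are larger}. The paper instead partitions $2\pi$ \emph{face by face}: after normalizing $r_v=1$, it splits each weight $c_{vu_1}$ into two half-edge contributions $r/(1+r_1)$ and $\tilde r/(1+r_1)$ attached to the two incident faces, and shows that the two half-contributions belonging to a single face of angle $2\alpha$ at $v$ sum to less than $\sin(2\alpha)<2\alpha$, via the identity $1+r^2=(1+r_1)(1+r_2)/(1+r_1+r_2)$ and $r=\tan\alpha$. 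Both arguments are elementary and yield the same (tight) constant; yours makes the appearance of $2\pi$ as the total angle subtended by $\partial P_v$ more transparent and avoids the half-edge bookkeeping, while the paper's gives the slightly sharper per-face bound $\sin(2\alpha)$ (not used elsewhere). The two geometric facts you flag as needing verification do hold: the incircle of the face $uvw_i$ touches the side $uv$ exactly at the tangency point of $C_u$ and $C_v$ (its distance from $v$ along $uv$ is the semiperimeter minus the opposite side length $r_u+r_{w_i}$, i.e.\ $r_v$), so $t$ lies strictly between the endpoints of $e^{\dagger}$; and $v$ is interior to $P_v$ since $C_v$ is inscribed in it.
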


\begin{proof}
The weights are invariant under dilation, so we can assume without
loss of generality that the radius of $C_{v}$ is $1$. Let $u_{1},u_{2}\in V$
be two vertices sharing a face with $v$, as drawn in Figure \ref{fig:Triangle vu1u2}:\\
\begin{figure}[H]
\includegraphics[scale=0.75]{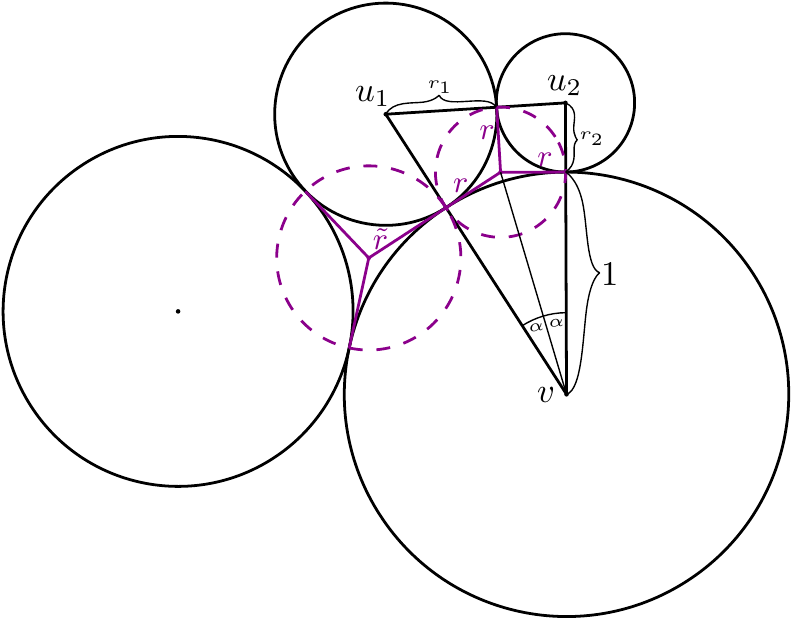}\caption{\label{fig:Triangle vu1u2}Triangle $vu_{1}u_{2}$ and its incircle}
\end{figure}
\noindent Denote by $r_{1},r_{2}$ their respective radii. Let $r$
be the inradius of the triangle $vu_{1}u_{2}$. Then by Proposition
\ref{prop: incircle radius formula} we have:
\[
r=\sqrt{\frac{r_{1}r_{2}}{1+r_{1}+r_{2}}}.
\]
Denote by $2\alpha$ the angle $\angle u_{1}vu_{2}$. The incenter
is the point of concurrency of the angle bisectors, so we have a right-angled
triangle with legs of length $1$ and $r$ and angle $\alpha$, so
$r=\tan\alpha$. Combining the two formulae for $r$ we get:
\[
\frac{1}{\cos^{2}\alpha}=1+\tan^{2}\alpha=1+r^{2}=\frac{1+r_{1}+r_{2}+r_{1}r_{2}}{1+r_{1}+r_{2}}=\frac{\left(1+r_{1}\right)\left(1+r_{2}\right)}{1+r_{1}+r_{2}}.
\]
We think of the weight $c_{vu_{1}}$ as ``split'' between two edges
connected in parallel with weights $\frac{r}{1+r_{1}}$ and $\frac{\tilde{r}}{1+r_{1}}$
where $\tilde{r}$ is the inradius of the triangle from the other
side of the edge $vu_{1}$. We can now bound the sum of the conductances
of the two half-edges relevant to the triangle $vu_{1}u_{2}$:
\[
\frac{r}{1+r_{1}}+\frac{r}{1+r_{2}}=r\cdot\frac{2+r_{1}+r_{2}}{\left(1+r_{1}\right)\left(1+r_{2}\right)}<r\cdot\frac{2+2r_{1}+2r_{2}}{\left(1+r_{1}\right)\left(1+r_{2}\right)}=\tan\alpha\cdot2\cos^{2}\alpha=\sin\left(2\alpha\right)<2\alpha.
\]
Summing the last inequality over all faces incident to $v$ gives
the desired result.
\end{proof}
\begin{rem}
The inequality of Proposition \ref{claim: sum of weights bounded}
is in fact tight: Consider a unit circle surrounded by $n$ identical
circles of radius $r_{n}$. Denote by $P_{n}$ the perimeter of an
$n$-regular polygon with inscribed radius $1$, then since $\lim_{n\rightarrow\infty}r_{n}=0$
and $\lim_{n\rightarrow\infty}P_{n}=2\pi$, the sum of weights around
the unit circle is:
\[
\frac{1}{1+r_{n}}\cdot P_{n}\underset{^{n\rightarrow\infty}}{\longrightarrow}2\pi.
\]
\end{rem}

\begin{lem}
\label{cor:Angle larger than arctan of sqrt of ratio}Let $x,y,z\in\mathbb{R}^{2}$
be centers of mutually tangent circles with radii $r_{x},r_{y},r_{z}$,
and let $r,M$ be the inradius and incenter of the triangle $xyz$.
Denote by $Q,R$ the respective tangency points of the incircle with
the sides $xy,xz$. Then:
\[
\measuredangle QMR>2\tan^{-1}\left(\sqrt{\frac{r_{x}}{\min\left\{ r_{y},r_{z}\right\} }}\right).
\]
\end{lem}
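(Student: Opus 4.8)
The plan is to reduce the claim to elementary trigonometry in the right triangle formed by the vertex $x$, the incenter $M$, and a tangency point. First I would record the side lengths of the triangle: since $C_x, C_y, C_z$ are mutually tangent, $|xy| = r_x + r_y$, $|xz| = r_x + r_z$ and $|yz| = r_y + r_z$, so the semiperimeter equals $s = r_x + r_y + r_z$. The standard fact that the tangent length from a vertex to the incircle equals $s$ minus the opposite side then gives $|xQ| = |xR| = s - |yz| = r_x$ (so $Q$ and $R$ are in fact the points where $C_x$ touches $C_y$ and $C_z$).

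Next I would set $A = \measuredangle yxz$ and consider the kite $xQMR$: it has right angles at $Q$ and $R$ (the incircle radii $MQ, MR$ are perpendicular to the sides), with $|xQ| = |xR| = r_x$ and $|MQ| = |MR| = r$. Summing its four angles gives $\measuredangle QMR = \pi - A$. Writing $\theta = \measuredangle QMR$, so that $\theta/2 = \pi/2 - A/2$, the right triangle $xQM$ yields $\tan(\theta/2) = \cot(A/2) = |xQ|/|MQ| = r_x/r$.

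Substituting the inradius formula from Proposition \ref{prop: incircle radius formula}, namely $r = \sqrt{r_x r_y r_z/(r_x + r_y + r_z)}$, turns this into $\tan(\theta/2) = \sqrt{r_x(r_x + r_y + r_z)/(r_y r_z)}$. Both $\theta/2$ and the arctangent appearing on the right-hand side of the claim lie in $(0, \pi/2)$, where $\tan$ is increasing, so the asserted inequality is equivalent to $\tan(\theta/2) > \sqrt{r_x/\min\{r_y, r_z\}}$. Assuming without loss of generality that $r_y \le r_z$ and squaring both sides, this becomes $r_x(r_x + r_y + r_z)/(r_y r_z) > r_x/r_y$, which cancels down to $r_x + r_y + r_z > r_z$ --- true since $r_x, r_y > 0$.

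The computation is short, so there is no serious analytic obstacle; the one point to get right is the geometry linking $\measuredangle QMR$ to the vertex angle $A$ and the identification $|xQ| = r_x$. I would be careful that this tangent-length identity is exactly where the mutual-tangency hypothesis (as opposed to a generic triangle) enters, and would verify that $xQMR$ is convex so that $\measuredangle QMR = \pi - A$ rather than a reflex angle.
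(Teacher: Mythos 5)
Your proposal is correct and follows essentially the same route as the paper: both arguments reduce to the right triangle $xQM$ and the relation $\tan\left(\tfrac{1}{2}\measuredangle QMR\right)=r_x/r$, and then bound $r$ via the inradius formula of Proposition \ref{prop: incircle radius formula}. The only cosmetic difference is that the paper passes through Lemma \ref{cla:two radii are larger} (i.e.\ $r<\sqrt{r_x r_y}$ and $r<\sqrt{r_x r_z}$) while you substitute the formula for $r$ directly and verify the resulting algebraic inequality, which amounts to the same computation.
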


\noindent In the context of a circle packing, this Lemma tells us
that for a given face $xyz$, if at least one of $y,z$ has a comparable
radius to $x$ then the angle of the polygon $P_{x}$ at the incenter
of $xyz$ cannot be too small.
\begin{proof}
Set $\alpha=\measuredangle QMR$ and consider the right triangle $xMQ$:
The lengths of its altitudes are $\left|MQ\right|=r$ and $\left|xQ\right|=r_{x}$.
The line $xM$ bisects the angle $\measuredangle QMR$ and so:
\[
\frac{r_{x}}{r}=\tan\measuredangle QMx=\tan\frac{\alpha}{2}.
\]
Using Lemma \ref{cla:two radii are larger} we bound:
\[
\tan\frac{\alpha}{2}>\frac{r_{x}}{\sqrt{r_{x}r_{y}}}=\sqrt{\frac{r_{x}}{r_{y}}},
\]
and since $\tan^{-1}$ is strictly increasing we get $\alpha>2\tan^{-1}\left(\sqrt{\frac{r_{x}}{r_{y}}}\right)$.
A similar calculation replacing $y$ with $z$ shows $\alpha>2\tan^{-1}\left(\sqrt{\frac{r_{x}}{r_{z}}}\right)$,
and the result follows.
\end{proof}
\noindent The following beautiful theorem by Descartes (see §1.5 in
\cite{coxeter1969introduction}), illustrated in Figure \ref{fig:Descartes Theorem},
will also be of use to us:
\begin{thm}
\label{thm:(Descartes'-Theorem)}(Descartes' Theorem): Let $k_{1},k_{2},k_{3}$
be the curvatures (i.e. the reciprocal of the radius) of three circles
in the plane externally tangent to one another in 3 distinct points.
Then there are exactly two other circles (or one circle and one line)
tangent to all three, and their curvatures $k$ satisfy:
\[
k=k_{1}+k_{2}+k_{3}\pm2\sqrt{k_{1}k_{2}+k_{2}k_{3}+k_{3}k_{1}}
\]
where positive $k$, negative $k$ and $k=0$ represent a circle tangent
externally to all three, internally to all three and a straight line
respectively.
\end{thm}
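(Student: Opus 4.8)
The plan is to reduce the statement to a quadratic equation in the curvature $k$ of the fourth circle, and to show by a coordinate computation that this quadratic coincides with the symmetric form of Descartes' relation
\[
\left(k_{1}+k_{2}+k_{3}+k\right)^{2}=2\left(k_{1}^{2}+k_{2}^{2}+k_{3}^{2}+k^{2}\right),
\]
whose solutions for $k$ are exactly the claimed expression (the discriminant works out to $16\left(k_{1}k_{2}+k_{2}k_{3}+k_{3}k_{1}\right)$).

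First I would set up coordinates. Writing $r_{i}=1/k_{i}$ for the three given radii, I place the centers $O_{1},O_{2},O_{3}$ in the plane; since the circles are externally tangent their pairwise distances are $\left|O_{i}O_{j}\right|=r_{i}+r_{j}$. A convenient normalization puts $O_{1}$ at the origin and $O_{2}$ on the positive $x$-axis, and then $O_{3}=\left(x_{3},y_{3}\right)$ is determined, with $y_{3}=\frac{2A}{r_{1}+r_{2}}$ where $A=\sqrt{\left(r_{1}+r_{2}+r_{3}\right)r_{1}r_{2}r_{3}}$ is the area of the triangle $O_{1}O_{2}O_{3}$ — the same Heron computation used in Proposition \ref{prop: incircle radius formula}.

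Next, let the fourth circle have center $P=\left(x,y\right)$ and signed radius $r$, with the convention $r>0$ for a circle externally tangent to all three and $r<0$ for the circle containing all three; in both cases tangency is encoded by the single family of equations $\left|P-O_{i}\right|^{2}=\left(r+r_{i}\right)^{2}$ for $i=1,2,3$. Subtracting the $i=1$ equation from those for $i=2,3$ eliminates the quadratic terms $x^{2}+y^{2}$ and leaves two equations linear in $x,y$ with $r$ as a parameter; solving them expresses $x$ and $y$ as affine functions of $r$. Substituting back into $\left|P-O_{1}\right|^{2}=\left(r+r_{1}\right)^{2}$ yields a quadratic in $r$, whose two roots are the two Soddy circles, a root $r=\infty$ (i.e. $k=0$) being the degenerate line case.

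The main obstacle is purely computational: showing that this quadratic, after substituting $k=1/r$ and $k_{i}=1/r_{i}$, collapses to the symmetric form above. The decisive simplification is that the cross terms assemble into the area $A$, and the identity $A^{2}=\left(r_{1}+r_{2}+r_{3}\right)r_{1}r_{2}r_{3}$ lets one clear the square roots; rewriting everything in curvatures then makes the expression fully symmetric in $k_{1},k_{2},k_{3},k$, at which point the quadratic is forced to be the Descartes relation. Reading off the two roots gives $k=k_{1}+k_{2}+k_{3}\pm2\sqrt{k_{1}k_{2}+k_{2}k_{3}+k_{3}k_{1}}$, and the sign of $k$ distinguishes the externally tangent circle ($k>0$), the enclosing circle ($k<0$), and the line ($k=0$), as claimed. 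Alternatively, one can invert at a point of tangency of two of the given circles, turning those two into parallel lines and the whole configuration into a trivial chain of equal inscribed circles, and then track the curvatures back through the inversion; but the coordinate reduction is the most self-contained route here.
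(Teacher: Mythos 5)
The paper does not prove this theorem at all: it is quoted as a classical result with a citation to Coxeter (\S 1.5 of \cite{coxeter1969introduction}), so there is no internal proof to compare yours against. Judged on its own, your coordinate strategy is a legitimate and standard route: the placement of $O_{1},O_{2},O_{3}$ with $\left|O_{i}O_{j}\right|=r_{i}+r_{j}$, the signed-radius convention packaging external and enclosing tangency into the single family $\left|P-O_{i}\right|^{2}=\left(r+r_{i}\right)^{2}$, the elimination of $x^{2}+y^{2}$ to make $x,y$ affine in $r$, and the back-substitution to a quadratic in $r$ are all correct, and the Heron identity $A^{2}=\left(r_{1}+r_{2}+r_{3}\right)r_{1}r_{2}r_{3}$ (the same one behind Proposition \ref{prop: incircle radius formula}) is indeed what clears the radicals.

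The gap is that the entire content of the theorem lives in the step you defer: ``the quadratic is forced to be the Descartes relation'' is asserted, not derived, and nothing short of actually expanding the back-substituted equation and regrouping it into the symmetric form $\left(k_{1}+k_{2}+k_{3}+k\right)^{2}=2\left(k_{1}^{2}+k_{2}^{2}+k_{3}^{2}+k^{2}\right)$ establishes the formula. Two smaller loose ends: the line case is not ``a root $r=\infty$ of the quadratic'' but the degeneration of the quadratic in $r$ to a linear equation (vanishing leading coefficient), which is exactly the condition $k_{1}+k_{2}+k_{3}=2\sqrt{k_{1}k_{2}+k_{2}k_{3}+k_{3}k_{1}}$; and the claim that there are \emph{exactly} two tangent circles requires ruling out mixed sign patterns (externally tangent to two, enclosing the third), i.e.\ observing that for mutually tangent circles the remaining Apollonius solutions degenerate to the three tangency points. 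The inversion argument you mention in your last sentence --- invert at a tangency point, turn two circles into parallel lines, read off the curvatures --- is in fact the cleaner classical proof and avoids the heavy algebra entirely; if you intend to supply a self-contained proof, I would carry that one out rather than the coordinate computation.
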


\noindent
\begin{figure}[H]
\includegraphics[scale=0.6]{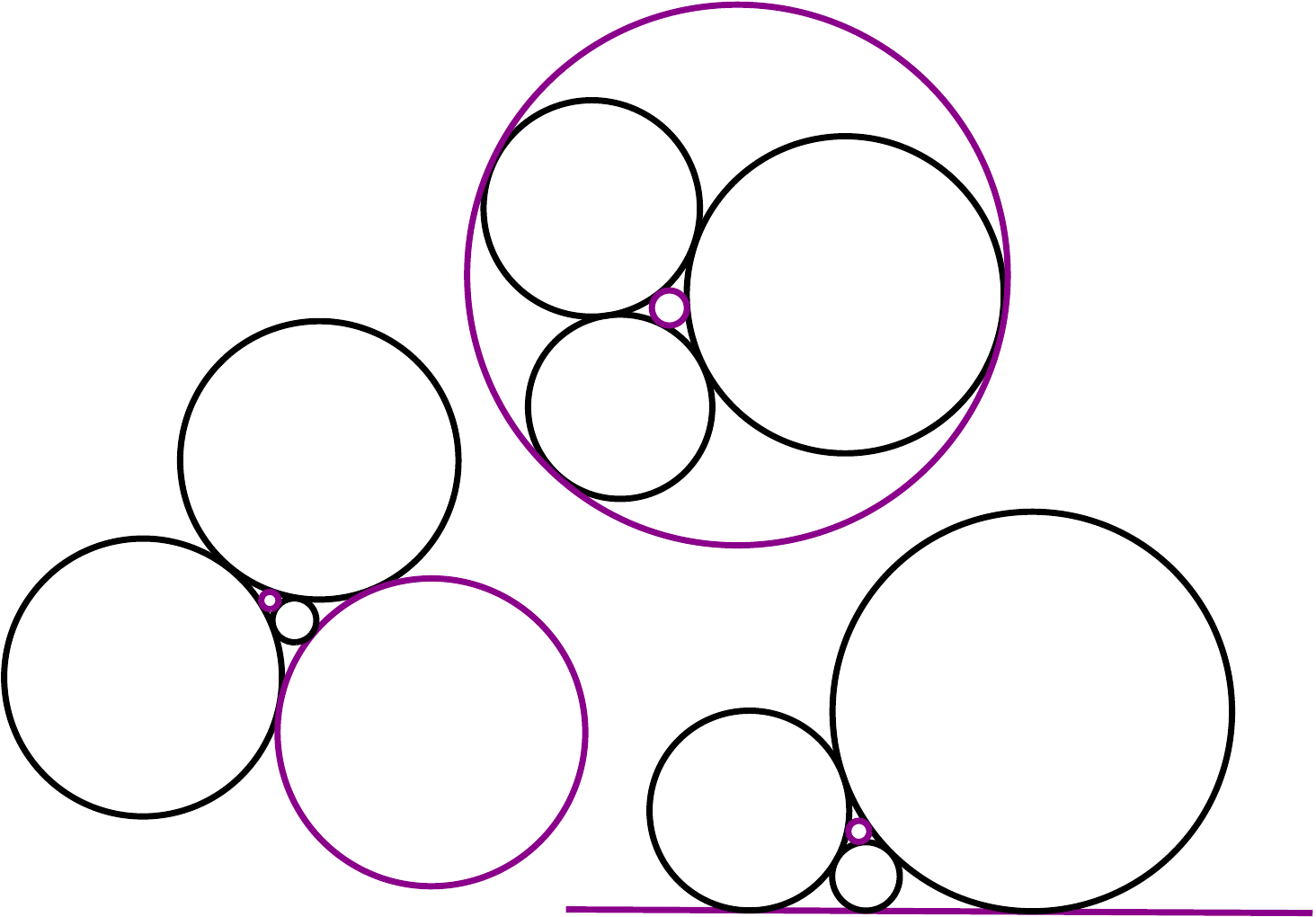}\caption{\label{fig:Descartes Theorem}3 mutually-tangent circles in black
and the 2 possibilities for a 4th circle in purple. A negative-curvature
solution can be seen in the topmost example and a zero-curvature solution
in the rightmost one.}
\end{figure}

\section{\label{sec:Special-Cases}Some Special Cases}

The goal of this section is to show two special cases exhibiting a
relation between the Dubejko-weighted random walk on an infinite circle-packed
triangulation and the parabolicity of its carrier. In Proposition
\ref{prop: bounded carrier implies transience}, whose arguments appear
in \cite{woess2000random} and \cite{stephenson2005introduction}
but are provided here for completeness, we show that if the carrier
is bounded then the WRW is transient. In Proposition \ref{prop: special case entire plane}
we show that if the carrier is the entire plane then the WRW is recurrent.
\begin{prop}
\label{prop: bounded carrier implies transience}Let $\Omega\subseteq\mathbb{R}^{2}$
be a bounded domain and $\left\{ C_{v}\right\} _{v\in V}$ a circle-packed
infinite triangulation of $\Omega$. Then the Dubejko-weighted random
walk on $\left\{ C_{v}\right\} _{v\in V}$ is transient.
\end{prop}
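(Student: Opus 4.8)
The plan is to exploit the martingale structure from Theorem \ref{Theorem: Dubejko Weights are a Martingale} against the boundedness of $\Omega$. Since $\Omega$ is bounded and every center of a circle $C_v$ is a vertex of the triangulation of $\Omega$ and hence lies in $\overline{\Omega}$, the sequence of visited centers $\left(Z_n\right)_{n\in\mathbb{N}}$ is a uniformly bounded, $\mathbb{R}^2$-valued process. By Theorem \ref{Theorem: Dubejko Weights are a Martingale} it is a martingale with respect to the natural filtration of the walk. Applying the martingale convergence theorem to each of its two coordinates (each a bounded real-valued martingale), I would conclude that $Z_n$ converges almost surely to some random limit $Z_\infty\in\mathbb{R}^2$.

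The heart of the argument is that almost sure convergence of $Z_n$ is incompatible with recurrence. I would first record the elementary fact that distinct vertices are mapped to distinct centers. Hence, if two distinct vertices $u\ne v$ were each visited infinitely often, then the subsequences of $\left(Z_n\right)$ along the visit times to $u$ and to $v$ would be constantly equal to the (distinct) centers of $C_u$ and $C_v$, producing two distinct subsequential limits and contradicting the a.s. convergence just established. Therefore, almost surely at most one vertex is visited infinitely often.

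Now I would argue by contradiction: suppose the walk is recurrent. Then the base vertex $\rho$ is visited infinitely often almost surely. Because the graph is locally finite, $\rho$ has only finitely many neighbors, and each visit to $\rho$ is followed by a step to one of them; by the pigeonhole principle some neighbor $u$ of $\rho$ is entered immediately after $\rho$ infinitely often, so $u$ too is visited infinitely often. (Alternatively, one may simply invoke the standard fact that an irreducible recurrent chain on a connected graph with at least two vertices visits every vertex infinitely often.) Either way we obtain two distinct vertices visited infinitely often, contradicting the previous paragraph. Hence the walk is not recurrent, i.e. it is transient.

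The genuinely load-bearing step is the bridge from recurrence to the existence of two distinct infinitely-visited vertices, which rests on both the local finiteness of the graph and the injectivity of $v\mapsto$ (center of $C_v$); the martingale convergence itself is routine once the uniform bound coming from boundedness of $\Omega$ is in place. The only point meriting a line of care is confirming that all centers genuinely lie in a bounded set, which is immediate since they are vertices of a triangulation of the bounded domain $\Omega$ and thus belong to $\overline{\Omega}$.
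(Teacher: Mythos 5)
Your proposal is correct and follows essentially the same route as the paper: boundedness of $\Omega$ makes the martingale of visited centers (Theorem \ref{Theorem: Dubejko Weights are a Martingale}) a bounded martingale, hence a.s.\ convergent, which is incompatible with two distinct vertices being visited infinitely often under recurrence. The paper simply invokes directly that a recurrent irreducible chain visits every vertex infinitely often, where you supply the pigeonhole detail; the substance is identical.
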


\begin{proof}
Let $\left(X_{n}\right)_{n\in\mathbb{N}}$ be the weighted random
walk started at some vertex $X_{0}\equiv\rho$. By Theorem \ref{Theorem: Dubejko Weights are a Martingale},
$\left(X_{n}\right)_{n\in\mathbb{N}}$ is a martingale. A bounded
martingale converges almost surely to some random variable (see for
example \cite{durrett2010probability}), and so $X_{n}\overset{a.s}{\longrightarrow}Y$
for some random variable $Y$. Choose any two vertices in the graph,
say $\rho$ and $\rho'$. Assume towards contradiction that the random
walk is recurrent, then $\rho$ and $\rho'$ are almost surely both
visited infinitely often. Now, $X_{n}$ converges to $Y$ a.s and
$X_{n}=\rho$ infinitely often a.s, which implies that $Y=\rho$ almost
surely. A similar argument shows that $Y=\rho'$ almost surely. But
$\rho\neq\rho'$ in contradiction.
\end{proof}
The proof of Proposition \ref{prop: bounded carrier implies transience}
was immediate from martingale arguments. For Proposition \ref{prop: special case entire plane},
we will use extensively the theory of probability and electric networks.
For background on this field one may read chapter 2 of \cite{lyons2017probability}.
\begin{defn}
Let $\left(G,c\right)$ be a network. The \textbf{Dirichlet energy}
of a function $f:V\left(G\right)\rightarrow\mathbb{R}$ is defined
by
\[
\mathcal{E}\left(f\right):=\sum_{uv\in E\left(G\right)}c_{uv}\left(f\left(v\right)-f\left(u\right)\right)^{2}.
\]
We will need the following criterion for recurrence which follows
from Dirichlet's Principle (see exercise 2.93 of \cite{lyons2017probability}):
\end{defn}

\begin{prop}
\label{prop: Criterion for recurrence}Let $\left(G,c\right)$ be
a network. Write $G=\left(V,E\right)$ and fix some $\rho\in V$.
Then $\left(G,c\right)$ is recurrent iff for any $\varepsilon>0$
there exists a finitely supported function $f:V\rightarrow\mathbb{R}$
with $f\left(\rho\right)=1$ and $\mathcal{E}\left(f\right)<\varepsilon$.
\end{prop}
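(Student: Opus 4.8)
The plan is to route through the effective conductance from $\rho$ to infinity and show that the stated infimum condition is exactly the statement that this conductance vanishes. Fix an exhaustion $V_1 \subseteq V_2 \subseteq \cdots$ of $V$ by finite sets with $\rho \in V_1$ and $\bigcup_n V_n = V$, and for each $n$ form the finite \emph{wired} network $G_n^*$ obtained from $G$ by identifying all vertices of $V \setminus V_n$ to a single vertex $z_n$ (summing the conductances of any resulting parallel edges). Let $\mathcal{C}_n := \mathcal{C}(\rho \leftrightarrow z_n)$ denote the effective conductance between $\rho$ and $z_n$ in $G_n^*$. By Rayleigh monotonicity, moving the grounded vertex further out only decreases conductance, so $(\mathcal{C}_n)_n$ is nonincreasing and its limit $\mathcal{C}(\rho \leftrightarrow \infty) := \lim_n \mathcal{C}_n$ exists. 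The first ingredient I would invoke is the standard probabilistic identity $\mathbb{P}_\rho[\tau_{z_n} < \tau_\rho^+] = \mathcal{C}_n / \pi(\rho)$ for the finite network $G_n^*$; letting $n \to \infty$, the left side tends to the escape probability $\mathbb{P}_\rho[\tau_\rho^+ = \infty]$, so the walk is recurrent iff $\mathcal{C}(\rho \leftrightarrow \infty) = 0$.

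The second ingredient is Dirichlet's principle applied in each finite network $G_n^*$: the conductance $\mathcal{C}_n$ equals the minimum of $\mathcal{E}(g)$ over functions $g$ on $G_n^*$ satisfying $g(\rho) = 1$ and $g(z_n) = 0$, attained by the harmonic (voltage) function. I would then identify such a $g$ with the function $f : V \to \mathbb{R}$ that agrees with $g$ on $V_n$ and is set to $0$ on $V \setminus V_n$; this $f$ is finitely supported with $f(\rho) = 1$. The key bookkeeping step is to check that the two energies coincide: edges inside $V_n$ contribute identically, each edge from $u \in V_n$ to the exterior contributes $c_{uv} f(u)^2$ in $\mathcal{E}(f)$ and the same amount as an edge from $u$ to $z_n$ in $G_n^*$, and edges inside the exterior contribute zero on both sides. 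Hence the minimization defining $\mathcal{C}_n$ is exactly a minimization of $\mathcal{E}$ over finitely supported functions supported in $V_n$ with $f(\rho) = 1$.

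Combining the two, I would conclude $\mathcal{C}(\rho \leftrightarrow \infty) = \inf\{\mathcal{E}(f) : f(\rho) = 1,\ f \text{ finitely supported}\}$: any finitely supported competitor has support in some $V_n$, hence $\mathcal{E}(f) \ge \mathcal{C}_n \ge \mathcal{C}(\rho \leftrightarrow \infty)$, while the Dirichlet minimizers $f_n$ are finitely supported with $\mathcal{E}(f_n) = \mathcal{C}_n \downarrow \mathcal{C}(\rho \leftrightarrow \infty)$. The proposition is then immediate from the definition of infimum: recurrence $\iff \mathcal{C}(\rho \leftrightarrow \infty) = 0 \iff$ the infimum equals $0 \iff$ for every $\varepsilon > 0$ some finitely supported $f$ with $f(\rho) = 1$ has $\mathcal{E}(f) < \varepsilon$.

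I expect the main obstacle to be the first ingredient, namely cleanly justifying that recurrence is equivalent to the vanishing of $\mathcal{C}(\rho \leftrightarrow \infty)$, since this requires the escape-probability identity together with the monotone passage to the limit along the exhaustion; by contrast, the energy-matching under wiring is routine bookkeeping, and the concluding equivalence is purely formal.
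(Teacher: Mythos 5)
Your proposal is correct. The paper does not actually prove this proposition---it only cites it as following from Dirichlet's Principle (exercise 2.93 of Lyons--Peres)---and your argument is a complete and accurate writeup of exactly that standard route: wired exhaustion, the identity $\mathbb{P}_\rho[\tau_{z_n}<\tau_\rho^+]=\mathcal{C}_n/\pi(\rho)$ (noting that the walk on $G_n^*$ agrees with the walk on $G$ up to the exit time of $V_n$, so the limit is the escape probability), Dirichlet's principle on each finite wired network, and the energy bookkeeping identifying competitors on $G_n^*$ with finitely supported functions on $V$. No gaps.
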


\noindent In the case that the carrier is the entire plane, the following
Lemma shows that the effective resistance across any well-chosen annulus
in the plane is at least a constant, which will be shown later to
imply recurrence.
\begin{lem}
\label{lem: resistance accross any well-chosen annulus is atl least constant}Let
$\left\{ C_{v}\right\} _{v\in V}$ be a circle-packed infinite triangulation
of the entire plane. Then there exists some $C>0$ such that for any
$R>0$ there exists some finitely supported $f:V\rightarrow\mathbb{R}$
such that $f\mid_{V\cap B\left(0,R\right)}\equiv1$ and $\mathcal{E}\left(f\right)\leq C$.
\end{lem}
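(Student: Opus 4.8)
We need to construct, for each $R > 0$, a finitely supported test function $f$ whose Dirichlet energy (w.r.t. the Dubejko weights) is bounded by a constant $C$ independent of $R$, with $f \equiv 1$ on the circles whose centers lie in $B(0,R)$. The natural candidate is a function that is $1$ near the origin, decays to $0$ as we move out, and is supported in a large annulus. The key structural fact available to us is the martingale property (Theorem \ref{Theorem: Dubejko Weights are a Martingale}): the Dubejko weights are precisely calibrated so that the centers form a martingale. This strongly suggests that the right test function should be (a truncated, radial version of) $\log$ of the Euclidean distance, mimicking the classical proof that $\mathbb{R}^2$ / the planar lattice is recurrent, where $\log|x|$ is the harmonic function with bounded Dirichlet energy on annuli.

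**The plan.** First I would fix a large radius $\rho \gg R$ and define $f$ via a radial profile: set $g(t) = 1$ for $t \le R$, $g(t) = \frac{\log(\rho/t)}{\log(\rho/R)}$ for $R \le t \le \rho$, and $g(t) = 0$ for $t \ge \rho$, then put $f(v) = g(|x_v|)$ where $x_v$ is the center of $C_v$. This is finitely supported (only circles with centers in $B(0,\rho)$ matter) and equals $1$ on $B(0,R)$. The content is to bound $\mathcal{E}(f) = \sum_{uv} c_{uv}\,(f(v)-f(u))^2$. Since $g$ is Lipschitz with $|g'(t)| \le \frac{1}{t\log(\rho/R)}$ on $(R,\rho)$, for an edge $e = uv$ we get $(f(v)-f(u))^2 \lesssim \big(\sup_{e} |g'|\big)^2 |e|^2$, where $|e| = |x_u - x_v|$ is the Euclidean edge length. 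Thus
\[
\mathcal{E}(f) \lesssim \frac{1}{(\log(\rho/R))^2} \sum_{e} \frac{|e^\dagger|}{|e|} \cdot \frac{|e|^2}{t_e^2} = \frac{1}{(\log(\rho/R))^2} \sum_{e} \frac{|e|\,|e^\dagger|}{t_e^2},
\]
using $c_e = |e^\dagger|/|e|$ and writing $t_e$ for the relevant radial scale on the edge. The crucial observation is that $|e|\,|e^\dagger|$ is comparable to the area of the quadrilateral formed by $e$ and its dual $e^\dagger$ (they are orthogonal, so this quadrilateral has area $\tfrac12|e||e^\dagger|$), and these quadrilaterals tile the carrier, which is all of $\mathbb{R}^2$. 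Hence $\sum_{e} |e|\,|e^\dagger| / t_e^2$ becomes, up to constants, a Riemann-sum approximation of $\int_{R \le |x| \le \rho} \frac{dA}{|x|^2} = 2\pi \log(\rho/R)$.

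**Carrying it out.** Grouping edges into dyadic annuli $A_k = \{2^k R \le |x| < 2^{k+1}R\}$ for $0 \le k \le \log_2(\rho/R)$, on each annulus $t_e \asymp 2^k R$ and $\sum_{e \cap A_k \ne \emptyset} |e|\,|e^\dagger| \asymp \mathrm{area}(A_k) \asymp (2^k R)^2$, so each annulus contributes $O(1)$ to $\sum_e |e||e^\dagger|/t_e^2$. Summing over the $\asymp \log_2(\rho/R)$ annuli gives $\sum_e |e||e^\dagger|/t_e^2 \lesssim \log(\rho/R)$, whence
\[
\mathcal{E}(f) \lesssim \frac{\log(\rho/R)}{(\log(\rho/R))^2} = \frac{1}{\log(\rho/R)}.
\]
Taking $\rho = eR$ (so $\log(\rho/R) = 1$) already yields a bound by an absolute constant $C$, as required; one does not even need $\rho \to \infty$.

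**Main obstacle.** The genuinely delicate step is justifying the comparison $\sum_{e \text{ near } A_k} |e|\,|e^\dagger| \asymp \mathrm{area}(A_k)$ and controlling boundary effects, because without a bounded-degree assumption the individual quadrilaterals can be highly eccentric (an edge $e$ can be long while its dual $e^\dagger$ is short, or vice versa), and a single quadrilateral may straddle several dyadic annuli. The clean way around this is to avoid per-edge control entirely and instead integrate: the quadrilaterals $\{$kite of $e\}$ have pairwise disjoint interiors and their union is the full plane (since the carrier is $\mathbb{R}^2$ and each face's incenter-based kites partition it), so $\sum_e \tfrac12|e||e^\dagger|\,\phi = \int_{\mathbb{R}^2}\phi\,dA$ exactly for any function $\phi$ constant on each kite; one then compares the piecewise-constant weight $t_e^{-2}$ against the smooth $|x|^{-2}$ up to a bounded multiplicative error on each annulus. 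I would therefore phrase the energy bound directly as a comparison between $\mathcal{E}(f)$ and $\int_{R \le |x| \le \rho} |g'(|x|)|^2\,dA$, with the tiling giving the identification and the orthogonality $e \perp e^\dagger$ supplying the area factor; this sidesteps any need to bound individual edge geometries and is where the parabolicity-flavored input "carrier $= \mathbb{R}^2$" is used.
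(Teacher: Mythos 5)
Your first half is essentially the paper's argument for the ``good'' edges: the orthodiagonal quadrilaterals $Q_e$ have area $\tfrac12|e||e^\dagger|$, they have disjoint interiors and cover the plane, and a Lipschitz bound on the radial profile converts the energy sum into an area count. (The logarithmic profile buys you nothing over the paper's piecewise-linear one once you take $\rho=eR$, as you note.) The genuine gap is exactly the point you flag as the ``main obstacle,'' and your proposed fix does not close it. Without bounded degree, an edge $e=uv$ can join a small circle with center in the annulus to a circle of astronomically large radius; then $Q_e$ stretches from $|x|\approx R$ out to $|x|\approx r_v\gg R$, and on that single kite the weight $|x|^{-2}$ varies by an unbounded factor. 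So the comparison ``piecewise-constant $t_e^{-2}$ versus smooth $|x|^{-2}$ up to a bounded multiplicative error on each annulus'' is false: there is no choice of $t_e$ for which $t_e^{-2}\asymp |x|^{-2}$ on all of $Q_e$, and the tiling identity $\sum_e \tfrac12|e||e^\dagger|\phi_e=\int\phi\,dA$ does not help you, because for these edges $\tfrac12|e||e^\dagger|\cdot t_e^{-2}$ can vastly overestimate $\int_{Q_e}|x|^{-2}dA$ (or the Lipschitz bound $|f(u)-f(v)|\le \sup|g'|\cdot|e|$ becomes vacuous since $|e|$ is huge while $|f(u)-f(v)|\le 1$). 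These edges can also be numerous, so you cannot discard them individually.

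The paper closes this gap with a separate, non-integral argument that your proposal lacks. It isolates the set $E_2$ of edges whose quadrilateral is not contained in $B(0,8R)$ and shows that every such edge with nonzero energy increment has an endpoint in the set $W$ of circles of radius at least $4R$ meeting $B(0,4R)$; a packing/area argument gives $|W|\le 16$, and then the purely geometric fact that the Dubejko weights around any single vertex sum to less than $2\pi$ (Proposition \ref{claim: sum of weights bounded}) bounds the contribution of all of $E_2$ by $32\pi$, using only $0\le f\le 1$. Some input of this kind --- a uniform bound on the total weight incident to a vertex, combined with a uniform bound on the number of offending vertices --- appears unavoidable, and it is the one ingredient your outline is missing.
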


\begin{proof}
\noindent Assume WLOG that $R>0$ is large enough such that if $0\in C_{v}$
for some $v\in V$ then $C_{v}\subseteq B\left(0,R\right)$. \\
Define a continuous $\phi:\mathbb{R}^{2}\rightarrow\mathbb{R}$ in
polar coordinates by:
\[
\phi\left(r,\theta\right)=\begin{cases}
1, & r\leq R\\
\frac{2R-r}{R}, & R\leq r\leq2R\\
0, & r\geq2R
\end{cases}.
\]
$\phi$ induces a function $f$ on $V$ by assigning to a vertex $v\in V$
the value that $\phi$ takes on the center of $C_{v}$. It is immediate
that $f\mid_{V\cap B\left(0,R\right)}\equiv1$. In addition, $f$
is finitely supported since its support is contained in the compact
subset $\overline{B\left(0,2R\right)}$ of the carrier, which can
intersect only finitely many polygons of $\left\{ C_{v}\right\} _{v\in V}$
(rigorously proven later in Lemma \ref{lem: compact set intersects finite number of polygons})
and hence only finitely many vertices of $V$.\\
We move on to bound the Dirichlet energy of $f$. Let $G=\left(V,E\right)$
be the tangency graph of $\left\{ C_{v}\right\} _{v\in V}$. For each
edge $uv\in E$ there is an orthodiagonal quadrilateral $Q_{uv}$
whose diagonals are $e$ and $e^{\dagger}$, as shown in Figure \ref{fig: ODCA}.\\
\begin{figure}[H]
\includegraphics[scale=0.8]{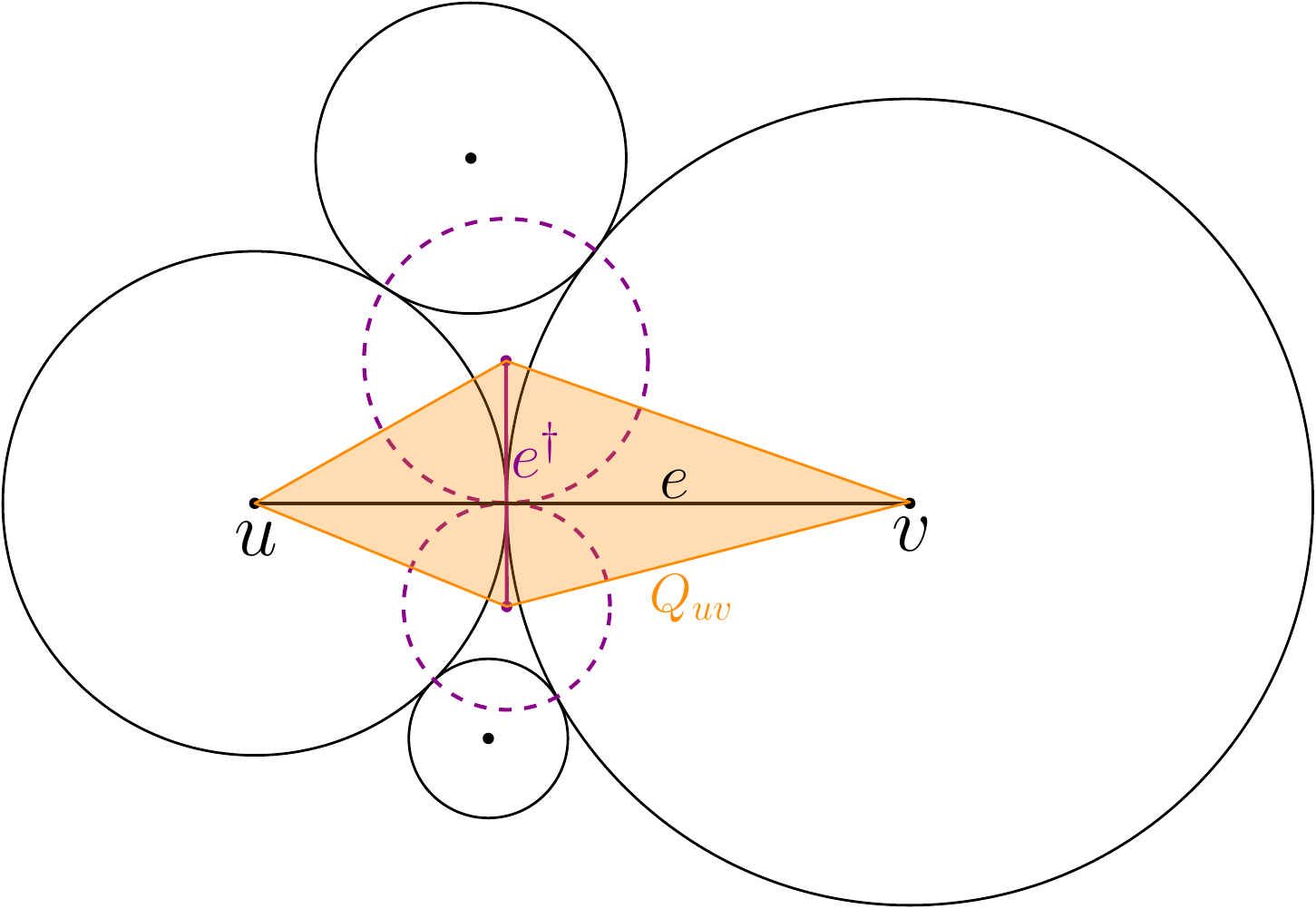}

\caption{\label{fig: ODCA}The orthodiagonal quadrilateral $Q_{uv}$.}
\end{figure}
 Using these quadrilaterals we divide the energy into two parts, bounding
each separately: Set $E_{1}=\left\{ e\in E\mid Q_{uv}\subseteq B\left(0,8R\right)\right\} $
and $E_{2}=E\backslash E_{1}$, and for each $i\in\left\{ 1,2\right\} $
set $\mathcal{E}_{i}=\sum_{uv\in E_{i}}\left(f\left(u\right)-f\left(v\right)\right)^{2}c_{uv}$.
Then $\mathcal{E}\left(f\right)=\mathcal{E}_{1}+\mathcal{E}_{2}$.\\
For the first part, notice that $\phi$ is $\frac{1}{R}$-Lipschitz
since the gradient's norm is:
\[
\left\Vert \nabla\phi\left(r,\theta\right)\right\Vert =\begin{cases}
\frac{1}{R}, & R<r<2R\\
0, & otherwise
\end{cases},
\]
and so for each $e=uv\in E$ we can bound:
\[
\left(f\left(u\right)-f\left(v\right)\right)^{2}c_{uv}\leq\left(\frac{1}{R}\left|e\right|\right)^{2}\frac{\left|e^{\dagger}\right|}{\left|e\right|}=\frac{2}{R^{2}}Area\left(Q_{e}\right).
\]
Since the quadrilaterals $\left\{ Q_{e}\right\} _{e\in E_{1}}$ have
disjoint interiors and are contained in $B\left(0,8R\right)$ we have:
\begin{align*}
\mathcal{E}_{1} & \leq\sum_{e\in E_{1}}\frac{2}{R^{2}}Area\left(Q_{e}\right)=\frac{2}{R^{2}}Area\left(\bigcup_{e\in E_{1}}Q_{e}\right)\leq\frac{2}{R^{2}}\cdot Area\left(B\left(0,8R\right)\right)=128\pi.
\end{align*}
We move on to bound the second term $\mathcal{E}_{2}$. Let $W=\left\{ v\in V:C_{v}\cap B\left(0,4R\right)\neq\emptyset\text{ and }r_{v}\geq4R\right\} $,
where $r_{v}$ is the radius of $C_{v}$. \\
We first claim that the cardinality of $W$ is bounded by some constant
number (say, 16). An easy way to show this is through area considerations:
If $v\in W$ then there exists some point $p\in C_{v}\cap B\left(0,4R\right)$.
This implies that there exists some $q\in B\left(0,6R\right)$ such
that $\left\Vert q-v\right\Vert \leq r_{v}-2R$: Indeed, set $t=\max\left\{ 0,1-\frac{r_{v}-2R}{\left\Vert p-v\right\Vert }\right\} $,
and then $q=p+t\left(v-p\right)$ can be shown to satisfy the above
conditions. By the choice of $q$ we have $B\left(q,2R\right)\subseteq C_{v}\cap B\left(0,8R\right)$,
and hence:
\[
\left|W\right|\cdot4R^{2}\leq\sum_{v\in W}Area\left(C_{v}\cap B\left(0,8R\right)\right)=Area\left(\bigcup_{v\in W}C_{v}\cap B\left(0,8R\right)\right)\leq Area\left(B\left(0,8R\right)\right)=64R^{2}.
\]
Dividing both sides by $4R^{2}$ shows that $\left|W\right|\leq16$.
\\
We next claim that if $e=uv\in E_{2}$ and $e$ has some energy contribution
then $u\in W$ or $v\in W$: If $f\left(u\right)=f\left(v\right)=0$
then the energy contribution is zero, so we can assume WLOG that $f\left(u\right)\neq0$,
and so $u\in B\left(0,2R\right).$ We will show that in this case
$v\in W$. First, we have $r_{u}<2R$, because if we assume otherwise
we have $0\in C_{u}$ and by the choice of $R$ we get $C_{u}\subseteq B\left(0,R\right)$
in contradiction. Thus, we have $C_{u}\subseteq B\left(0,4R\right)$
and since $C_{v}$ touches $C_{u}$ we deduce $C_{v}\cap B\left(0,4R\right)\neq\emptyset$.
Next, The quadrilateral $Q_{e}$ is convex, so from $Q_{e}\nsubseteq B\left(0,8R\right)$
we deduce that one of its vertices lies outside $B\left(0,8R\right)$.
This vertex cannot be $u$ since $u\in B\left(0,2R\right)$. If the
vertex is $v$, we have
\[
r_{v}=\left(r_{v}+r_{u}\right)-r_{u}=\left\Vert v-u\right\Vert -r_{u}\geq\left\Vert v\right\Vert -\left\Vert u\right\Vert -r_{u}>8R-2R-2R=4R,
\]
so $v\in W$ as needed. Otherwise, another vertex of the quadrilateral
which we denote by $z$ lies outside $B\left(0,8R\right)$, and let
$w\in V$ be the vertex such that $uvw$ is the face whose incenter
is $z$. Let $r$ be the inradius of $uvw$. Since $z\notin B\left(0,8R\right)$
and \uline{\mbox{$u\in B\left(0,2R\right)$}} we have:
\[
\left\Vert z-u\right\Vert \geq\left\Vert z\right\Vert -\left\Vert u\right\Vert >8R-2R=6R.
\]
On the other hand, using Lemma \ref{cla:two radii are larger} and
the orthogonality of $e$ and $e^{\dagger}$ we get:
\[
\left\Vert z-u\right\Vert =\sqrt{r_{u}^{2}+r^{2}}<\sqrt{r_{u}^{2}+r_{u}r_{v}}<\sqrt{4R^{2}+2R\cdot r_{v}}
\]
Combining the two inequalities involving $\left\Vert z-u\right\Vert $
and solving for $r_{v}$ gives $r_{v}>16R$, which finishes the second
claim about $W$.\\
Finally, using the trivial bound $0\leq f\leq1$ together Lemma \ref{claim: sum of weights bounded}
and what we know about $W$, we obtain:
\[
\mathcal{E}_{2}\leq\sum_{v\in W}\sum_{u\sim v}\left(f\left(u\right)-f\left(v\right)\right)^{2}c_{uv}\leq\sum_{v\in W}\sum_{u\sim v}1\cdot c_{uv}\leq\sum_{v\in W}2\pi\leq32\pi.
\]
In total we get $\mathcal{E}\left(f\right)=\mathcal{E}_{1}+\mathcal{E}_{2}\leq C$
for $C=160\pi$.
\end{proof}
\begin{prop}
\label{prop: special case entire plane}Let $\left\{ C_{v}\right\} _{v\in V}$
be a circle-packed infinite triangulation of the entire plane. Then
the Dubejko-weighted random walk on $\left\{ C_{v}\right\} _{v\in V}$
is recurrent.
\end{prop}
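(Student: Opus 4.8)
The plan is to invoke the recurrence criterion of Proposition \ref{prop: Criterion for recurrence}: it suffices to exhibit, for every $\varepsilon>0$, a finitely supported $f\colon V\to\mathbb{R}$ with $f(\rho)=1$ and $\mathcal{E}(f)<\varepsilon$. The single function produced by Lemma \ref{lem: resistance accross any well-chosen annulus is atl least constant} only has energy bounded by the constant $C$, which is not yet small; the point is that this constant controls the energy that can cross \emph{one} dyadic annulus, so by spreading the unit drop from $\rho$ to infinity evenly across $n$ disjoint annuli placed in series, one should obtain energy of order $1/n$.

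Concretely, fix $\rho$ and choose $R_{0}$ large enough that $C_{\rho}\subseteq B(0,R_{0})$ and the WLOG hypothesis of the lemma holds. For $k=0,1,\dots,n-1$ set $R_{k}=2^{k}R_{0}$ and let $f_{k}$ be the function built in the proof of Lemma \ref{lem: resistance accross any well-chosen annulus is atl least constant} at radius $R_{k}$, so that $f_{k}\equiv1$ on $V\cap B(0,R_{k})$, $f_{k}$ is supported in $B(0,2R_{k})$ (this is exactly what the explicit $\phi$ there provides), and $\mathcal{E}(f_{k})\le C$. Define the averaged ``staircase''
\[
g=\frac{1}{n}\sum_{k=0}^{n-1}f_{k}.
\]
Since every $f_{k}$ equals $1$ at $\rho$ we get $g(\rho)=1$, and $g$ is finitely supported. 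Writing $\psi=ng=\sum_{k}f_{k}$ we have $\mathcal{E}(g)=n^{-2}\mathcal{E}(\psi)$, so it remains to prove $\mathcal{E}(\psi)=O(n)$, which immediately yields $\mathcal{E}(g)=O(1/n)\to0$ and hence recurrence.

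The whole difficulty is the bound $\mathcal{E}(\psi)=O(n)$. The key structural observation is that the transition regions $\{R_{k}\le|x|\le 2R_{k}\}$ on which $f_{k}$ is non-constant tile the plane consecutively, so for a ``generic'' edge $e=uv$ only one or two consecutive $f_{k}$ distinguish $u$ and $v$. For the collection of such edges one sums $\sum_{k}\mathcal{E}(f_{k})\le nC$ (the factor coming from adjacent scales being harmless), since each contributes at essentially one scale; equivalently, one re-runs the $\mathcal{E}_{1}$-estimate of the lemma scale by scale, where the local contribution of the edges whose orthodiagonal quadrilateral lies in the annulus of scale $k$ is at most a constant by the disjoint-area argument, and summing over the $n$ scales gives $O(n)$.

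The main obstacle is the edges incident to circles that are large relative to their distance from the origin: such an edge may be distinguished by many of the $f_{k}$ simultaneously, and a naive Cauchy--Schwarz over the scales it crosses loses a factor equal to the number of those scales. This is handled precisely as the term $\mathcal{E}_{2}$ is handled in the proof of Lemma \ref{lem: resistance accross any well-chosen annulus is atl least constant}, and the accounting must be done \emph{per scale} rather than per circle: at each fixed scale there are only boundedly many large circles (the set $W$, of size at most $16$, by the packing/area argument), the total weight emanating from any vertex is at most $2\pi$ by Proposition \ref{claim: sum of weights bounded}, and across a single scale $\psi$ drops by only $1$, so the energy these circles contribute to that scale is $O(1)$. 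Summing the $O(1)$ per-scale contributions over the $n$ scales again gives $O(n)$. Combining the two parts yields $\mathcal{E}(\psi)\le C'n$ for a constant $C'$, hence $\mathcal{E}(g)\le C'/n$; taking $n>C'/\varepsilon$ produces the required test function and proves recurrence.
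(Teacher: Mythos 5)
Your high-level strategy coincides with the paper's: average $n$ of the annulus test functions from Lemma \ref{lem: resistance accross any well-chosen annulus is atl least constant} so that the averaged function has energy $o(1)$, and conclude via Proposition \ref{prop: Criterion for recurrence}. The gap is in the treatment of edges whose endpoints' centers lie many dyadic scales apart. For such an edge $uv$ the contribution to $\mathcal{E}(\psi)$ is $c_{uv}\bigl(\sum_{k}(f_{k}(u)-f_{k}(v))\bigr)^{2}$, and since all the increments $f_{k}(u)-f_{k}(v)$ share the same sign, an edge distinguished by $m$ of the $f_{k}$ contributes on the order of $c_{uv}m^{2}$. Your accounting --- ``across a single scale $\psi$ drops by only $1$, so the energy these circles contribute to that scale is $O(1)$'' --- distributes this quantity additively, one unit drop per scale, i.e.\ it counts $c_{uv}\cdot m$; that is precisely the naive accounting you said must be avoided, restated rather than repaired. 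The ingredients you cite do not close the gap: $|W|\leq16$ per scale together with $\sum_{u\sim v}c_{uv}<2\pi$ and the trivial bound $m\leq n$ only give $O(n^{2})$ per scale, hence $O(n^{3})$ in total. A genuine repair would need, at least, a decay estimate of the form $c_{uv}\lesssim2^{-m/2}$ for an edge spanning $m$ scales (which does follow from Proposition \ref{Prop: Formula for Dubejko Weights} and the normalization that any circle covering the origin lies in $B(0,R_{0})$), and then a summable per-scale count of such edges that must contend with unbounded degree; none of this is in your sketch.

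The paper sidesteps the issue entirely by abandoning fixed dyadic radii: $R_{i+1}$ is chosen recursively so large that $f_{i}$ vanishes on every vertex outside $B(0,R_{i+1})$ \emph{and on all of that vertex's neighbours}. One then checks that every edge has a nonzero increment for at most one $f_{i}$ (for larger $j$ both endpoints lie in $B(0,R_{j})$ where $f_{j}\equiv1$; for smaller $j$ both values are $0$), so there are no cross terms at all and $\mathcal{E}(g_{N})=N^{-2}\sum_{i}\mathcal{E}(f_{i})<C/N$. Replacing your dyadic scales with this adaptive choice makes the rest of your argument go through.
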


\begin{proof}
Let $G=\left(V,E\right)$ be the tangency graph of $\left\{ C_{v}\right\} _{v\in V}$
and fix some $\rho\in V$. Construct a sequence of radii $R_{1}<R_{2}<...$
and a sequence of finitely supported functions $f_{i}:V\rightarrow\mathbb{R}$
in the following way: Choose $R_{1}>0$ such that $\rho\in B\left(0,R_{1}\right)$,
and choose $f_{1}$ to be a finitely supported function as in Lemma
\ref{lem: resistance accross any well-chosen annulus is atl least constant}
such that $\mathcal{E}\left(f_{1}\right)<C$ and $f_{1}\mid_{V\cap B\left(0,R_{1}\right)}\equiv1$.
For each $i>1$, assume we've defined $R_{i}>0$ and $f_{i}$. Since
$f_{i}$ is finitely supported and $G$ is locally finite, we can
choose some $R_{i+1}>R_{i}$ such that for every $v\notin B\left(0,R_{i+1}\right)$
we have that $f_{i}$ vanishes on $v$ and on all its neighbours.
Applying Lemma \ref{lem: resistance accross any well-chosen annulus is atl least constant}
again, there exists some finitely supported $f_{i+1}:V\rightarrow\mathbb{R}$
with $\mathcal{E}\left(f_{i+1}\right)<C$ and $f_{i+1}\mid_{V\cap B\left(0,R_{i+1}\right)}\equiv1$.\\
For each $N$ set $g_{N}:=\sum_{i=1}^{N}\frac{1}{N}f_{i}$. Then $g_{N}$
is finitely supported (as the sum of finitely many finitely supported
functions). Furthermore, since $f_{i}\left(\rho\right)=1$ for every
$1\leq i\leq N$, we have $g_{N}\left(\rho\right)=1$ as their mean.
Using Proposition \ref{prop: Criterion for recurrence}, it is enough
to show that $\lim_{N\rightarrow\infty}\mathcal{E}\left(g_{N}\right)=0$
in order to deduce recurrence.\\
To this end, we claim that for every $uv\in E$ there is at most one
$1\leq i\leq N$ such that $f_{i}\left(u\right)\neq f_{i}\left(v\right)$.
Indeed, assume $f_{i}\left(u\right)\neq f_{i}\left(v\right)$. then
at least one of these is nonzero. By the construction of $f_{i}$,
this implies that $u,v\in B\left(0,R_{i+1}\right)$. Thus, for any
$j>i$ we have $f_{j}\mid_{V\cap B\left(0,R_{j}\right)}\equiv1$ and
in particular $f_{j}\left(u\right)=f_{j}\left(v\right)=1$. Similarly,
$f_{i}\left(u\right)$ and $f_{i}\left(v\right)$ cannot both be equal
to $1$, and WLOG we may assume that $f_{i}\left(u\right)\neq1$.
Then by construction of $f_{i}$ we have $u\notin B\left(0,R_{i}\right)$,
and so for each $j<i$ we have $u\notin B\left(0,R_{j+1}\right)$
and by the choice of $R_{j+1}$ we deduce $f_{j}\left(u\right)=f_{j}\left(v\right)=0$.\\
Now, having established that each edge $uv\in E$ contributes to the
energy of at most one $f_{i}$, the energy $\mathcal{E}\left(g_{N}\right)$
decomposes as follows:
\[
\mathcal{E}\left(g_{N}\right)=\mathcal{E}\left(\sum_{i=1}^{N}\frac{1}{N}f_{i}\right)=\sum_{i=1}^{N}\mathcal{E}\left(\frac{1}{N}f_{i}\right)=\sum_{i=1}^{N}\frac{1}{N^{2}}\mathcal{E}\left(f_{i}\right)<\sum_{i=1}^{N}\frac{1}{N^{2}}\cdot C=\frac{C}{N},
\]
and so $\lim_{N\rightarrow\infty}\mathcal{E}\left(g_{N}\right)=0$
as needed.
\end{proof}

\section{Some Integration Lemmas\label{sec: Some-Integration-Lemmas}}

The goal of this section is to prove Lemmas \ref{lem: integration on polygon}
and \ref{lem: Integration on parts of a polygon}, providing bounds
on integrals involving harmonic functions over polygons whose angles
are not too sharp.
\begin{defn}
Let $U\subseteq\mathbb{R}^{2}$ be open. A function $f:U\rightarrow\mathbb{R}$
is said to be \textbf{harmonic} in $U$ if it is twice continuously
differentiable and for every $x_{0}\in U$ the Laplacian vanishes
on $x_{0}$, i.e $\left(\frac{\partial^{2}f}{\partial x^{2}}+\frac{\partial^{2}f}{\partial y^{2}}\right)\left(x_{0}\right)=0$.
\end{defn}

\noindent We begin be recalling the following classical inequality
(for a proof see \cite{axler2013harmonic} for example):
\begin{thm}
\textbf{(Harnack's Inequality):} Let $f$ be nonnegative and harmonic
in $B\left(x_{0},R\right)$. Let $x\in B\left(x_{0},R\right)$ and
set $r=\left\Vert x-x_{0}\right\Vert $. Then:
\[
\frac{R-r}{R+r}f\left(x_{0}\right)\leq f\left(x\right)\leq\frac{R+r}{R-r}f\left(x_{0}\right).
\]
\end{thm}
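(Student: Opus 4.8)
The plan is to represent $f$ inside the ball via the Poisson integral formula for the disk and then bound the Poisson kernel pointwise. Translating so that $x_0$ is the origin and writing $x = re^{i\theta}$, the representation on a disk of radius $\rho$ reads
\[
f\left(re^{i\theta}\right)=\frac{1}{2\pi}\int_0^{2\pi}\frac{\rho^2-r^2}{\rho^2-2\rho r\cos\left(\theta-\phi\right)+r^2}\,f\left(\rho e^{i\phi}\right)\,d\phi .
\]
Since $f$ is only assumed harmonic in the \emph{open} ball $B\left(x_0,R\right)$, I would first apply this on any closed sub-ball $\overline{B\left(x_0,\rho\right)}$ with $r<\rho<R$, where $f$ is smooth up to the boundary and the formula is valid, and only at the very end let $\rho\to R^-$.

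The key observation is that the denominator of the kernel is exactly the squared distance $\left\Vert \rho e^{i\phi}-re^{i\theta}\right\Vert ^2$ between the boundary point and the interior point, and this ranges over $\left[\left(\rho-r\right)^2,\left(\rho+r\right)^2\right]$ as $\phi$ varies. Hence the kernel is squeezed by
\[
\frac{\rho-r}{\rho+r}\le\frac{\rho^2-r^2}{\rho^2-2\rho r\cos\left(\theta-\phi\right)+r^2}\le\frac{\rho+r}{\rho-r}.
\]
Because $f\ge 0$, I may integrate these inequalities against $f\left(\rho e^{i\phi}\right)\,d\phi/2\pi$ and pull the constant bounds outside the integral. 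The remaining average $\frac{1}{2\pi}\int_0^{2\pi}f\left(\rho e^{i\phi}\right)\,d\phi$ equals $f\left(x_0\right)$ by the mean value property for harmonic functions, so I obtain $\frac{\rho-r}{\rho+r}f\left(x_0\right)\le f\left(x\right)\le\frac{\rho+r}{\rho-r}f\left(x_0\right)$. Letting $\rho\to R^-$, the bounding factors converge to $\frac{R-r}{R+r}$ and $\frac{R+r}{R-r}$ by continuity, which yields the stated inequality.

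The only genuine input here is the Poisson integral representation itself (together with the mean value property it specializes to); everything after that is the elementary kernel estimate above, so the main obstacle is really just invoking or re-deriving that representation. I would rather cite it from a standard reference, as the paper already does. One mild technical point worth flagging is the passage from the closed sub-ball to the open ball via $\rho\to R^-$: this is needed precisely because $f$ is assumed harmonic only on $B\left(x_0,R\right)$ and need not extend continuously to $\partial B\left(x_0,R\right)$, but since the bounding constants depend continuously on $\rho$ and $r<R$ is fixed, the limit is harmless.
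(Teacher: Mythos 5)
Your proof is correct: the Poisson-kernel estimate $\frac{\rho-r}{\rho+r}\le\frac{\rho^{2}-r^{2}}{\left\Vert \rho e^{i\phi}-re^{i\theta}\right\Vert ^{2}}\le\frac{\rho+r}{\rho-r}$, combined with the mean value property and the limit $\rho\to R^{-}$ to handle harmonicity only on the open ball, is exactly the standard argument. The paper does not prove this theorem at all --- it cites \cite{axler2013harmonic} --- and your argument is precisely the proof found in such standard references, so there is nothing to compare beyond noting that your handling of the boundary (working on sub-balls and passing to the limit) is the right way to deal with the hypothesis that $f$ is harmonic only on the open ball.
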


\begin{lem}
Let $f$ be nonnegative and harmonic in $B\left(x_{0},R\right)$.
Then:\label{lem: nonnegative harmonic upper bound on gradient}
\[
\left\Vert \nabla f\left(x_{0}\right)\right\Vert \leq\frac{2f\left(x_{0}\right)}{R}.
\]

\end{lem}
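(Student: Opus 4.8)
The plan is to read the gradient off as a one-sided directional derivative and bound that derivative using Harnack's inequality, which was stated immediately above and is clearly the intended tool (note that its hypotheses---nonnegativity and harmonicity on $B\left(x_0,R\right)$---match exactly those of the lemma).

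First I would dispose of the trivial case: if $\nabla f\left(x_0\right)=0$ the claimed inequality holds automatically since the right-hand side is nonnegative (as $f\geq 0$). So assume $\nabla f\left(x_0\right)\neq 0$ and set $u=\nabla f\left(x_0\right)/\left\Vert \nabla f\left(x_0\right)\right\Vert$, the unit vector pointing in the gradient direction. The key observation is that the directional derivative of $f$ at $x_0$ in the direction $u$ is exactly $\nabla f\left(x_0\right)\cdot u=\left\Vert \nabla f\left(x_0\right)\right\Vert$. Hence it suffices to bound this directional derivative, which I would express as the one-sided limit $\lim_{t\to 0^{+}}\frac{f\left(x_0+tu\right)-f\left(x_0\right)}{t}$.

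Now for each $t$ with $0<t<R$, the point $x_0+tu$ lies in $B\left(x_0,R\right)$ at distance exactly $t$ from $x_0$, so Harnack's inequality applies and yields $f\left(x_0+tu\right)\leq\frac{R+t}{R-t}f\left(x_0\right)$. Subtracting $f\left(x_0\right)$, using $\frac{R+t}{R-t}-1=\frac{2t}{R-t}$, and dividing by $t>0$ gives
\[
\frac{f\left(x_0+tu\right)-f\left(x_0\right)}{t}\leq\frac{2}{R-t}f\left(x_0\right).
\]
Letting $t\to 0^{+}$ the left side converges to $\left\Vert \nabla f\left(x_0\right)\right\Vert$ and the right side to $\frac{2f\left(x_0\right)}{R}$, which is precisely the desired bound.

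The argument is entirely elementary once Harnack is available, so I do not expect a genuine obstacle. The only points requiring care are ensuring nonnegativity of $f$ is invoked (it is what licenses Harnack) and the reduction to the gradient direction, which is what makes the single directional derivative recover the full norm $\left\Vert \nabla f\left(x_0\right)\right\Vert$ rather than merely one of its components; aligning $u$ with $\nabla f\left(x_0\right)$ is exactly what makes the upper Harnack bound alone suffice, with no need for the lower bound.
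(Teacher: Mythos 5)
Your proof is correct and follows essentially the same route as the paper: apply Harnack's inequality along the ray in the direction of $\nabla f\left(x_{0}\right)$ and pass to the limit of the difference quotient. The only (harmless) difference is that you use just the upper Harnack bound, whereas the paper combines both bounds into an absolute-value estimate before taking the limit; your observation that aligning $u$ with the gradient makes the one-sided bound suffice is a slight streamlining of the same argument.
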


\begin{proof}
Let $x\in B\left(x_{0},R\right)$, and write $r=\left\Vert x-x_{0}\right\Vert $.
By Harnack's inequality we have:
\[
\frac{R-r}{R+r}f\left(x_{0}\right)\leq f\left(x\right)\leq\frac{R+r}{R-r}f\left(x_{0}\right).
\]
Rearranging this we get:
\[
-\frac{2}{R+r}f\left(x_{0}\right)\leq\frac{f\left(x\right)-f\left(x_{0}\right)}{r}\leq\frac{2}{R-r}f\left(x_{0}\right).
\]
Using the inequality $R-r\leq R+r$, we deduce:
\[
\frac{\left|f\left(x\right)-f\left(x_{0}\right)\right|}{r}\leq\frac{2}{R-r}f\left(x_{0}\right).
\]
If $\nabla f\left(x_{0}\right)=0$ then the inequality holds. Otherwise,
Since $\left\Vert \nabla f\left(x_{0}\right)\right\Vert $ is the
directional derivative in the direction of $\hat{u}=\frac{\nabla f\left(x_{0}\right)}{\left\Vert \nabla f\left(x_{0}\right)\right\Vert }$,
we have:
\[
\left\Vert \nabla f\left(x_{0}\right)\right\Vert =\lim_{h\rightarrow0}\left|\frac{f\left(x_{0}+h\hat{u}\right)-f\left(x_{0}\right)}{h}\right|\leq\limsup_{h\rightarrow0}\frac{2}{R-h}f\left(x_{0}\right)=\frac{2}{R}f\left(x_{0}\right).
\]
\end{proof}
\begin{lem}
Let $f$ be harmonic in $B\left(x_{0},R\right)$ and continuous in
$\overline{B\left(x_{0},R\right)}$. The\label{lem: max-min bounds gradient}n:
\[
\left\Vert \nabla f\left(x_{0}\right)\right\Vert \leq\frac{2}{R}\left(\max_{x\in\overline{B\left(x_{0},R\right)}}f\left(x\right)-\min_{x\in\overline{B\left(x_{0},R\right)}}f\left(x\right)\right).
\]
\end{lem}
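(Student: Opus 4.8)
The plan is to reduce this to the nonnegative case already handled in Lemma \ref{lem: nonnegative harmonic upper bound on gradient} by subtracting off the minimum value. Since $f$ is continuous on the compact set $\overline{B\left(x_{0},R\right)}$, the extreme value theorem guarantees that both $M:=\max_{x\in\overline{B\left(x_{0},R\right)}}f\left(x\right)$ and $m:=\min_{x\in\overline{B\left(x_{0},R\right)}}f\left(x\right)$ are attained and finite. I would then set $g:=f-m$. Subtracting a constant does not change the Laplacian, so $g$ is harmonic in $B\left(x_{0},R\right)$, and by the definition of $m$ we have $g\geq0$ on all of $\overline{B\left(x_{0},R\right)}$, in particular on the open ball. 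Thus $g$ satisfies exactly the hypotheses of the previous lemma.

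Applying Lemma \ref{lem: nonnegative harmonic upper bound on gradient} to $g$, and using that a constant shift leaves the gradient unchanged so that $\nabla f\left(x_{0}\right)=\nabla g\left(x_{0}\right)$, I obtain
\[
\left\Vert \nabla f\left(x_{0}\right)\right\Vert =\left\Vert \nabla g\left(x_{0}\right)\right\Vert \leq\frac{2g\left(x_{0}\right)}{R}=\frac{2\left(f\left(x_{0}\right)-m\right)}{R}.
\]
Finally, since $f\left(x_{0}\right)\leq M$ by the definition of $M$, we have $f\left(x_{0}\right)-m\leq M-m$, and therefore
\[
\left\Vert \nabla f\left(x_{0}\right)\right\Vert \leq\frac{2\left(M-m\right)}{R}=\frac{2}{R}\left(\max_{x\in\overline{B\left(x_{0},R\right)}}f\left(x\right)-\min_{x\in\overline{B\left(x_{0},R\right)}}f\left(x\right)\right),
\]
which is the desired bound.

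There is no substantive obstacle here. The only point that genuinely needs care is the legitimacy of invoking Lemma \ref{lem: nonnegative harmonic upper bound on gradient}, which requires $g$ to be nonnegative throughout the open ball; this is exactly why I take $m$ to be the minimum over the \emph{closed} ball rather than, say, the value $f\left(x_{0}\right)$ at the center. Everything else is an immediate consequence of the preceding lemma together with the compactness of $\overline{B\left(x_{0},R\right)}$.
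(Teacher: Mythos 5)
Your proposal is correct and is essentially identical to the paper's own proof: both subtract the minimum over the closed ball to obtain a nonnegative harmonic function and then apply Lemma \ref{lem: nonnegative harmonic upper bound on gradient}. Your write-up is if anything slightly more careful, spelling out that $f(x_0)\leq M$ in the final step.
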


\begin{proof}
Set $g=f-\min_{x\in\overline{B\left(x_{0},R\right)}}f\left(x\right)$.
Then $g$ is nonnegative and harmonic in $B\left(x_{0},R\right)$,
and applying Lemma \ref{lem: nonnegative harmonic upper bound on gradient}
gives:
\[
\left\Vert \nabla f\left(x_{0}\right)\right\Vert =\left\Vert \nabla g\left(x_{0}\right)\right\Vert \leq\frac{2}{R}g\left(x_{0}\right)\leq\max_{x\in\overline{B\left(x_{0},R\right)}}f\left(x\right)-\min_{x\in\overline{B\left(x_{0},R\right)}}f\left(x\right).
\]
\end{proof}
\begin{lem}
There exists some $C_{0}>0$ such that for every $R>0$, $x_{0}\in\mathbb{R}^{2}$
and $f$ harmonic in $B\left(x_{0},R\right)$ and continuous in $\overline{B\left(x_{0},R\right)}$
we have in polar coordinates:\label{claim: integration on disc}
\[
\intop_{0}^{2\pi}\intop_{0}^{R}\left\Vert \nabla f\left(x_{0}+re^{i\theta}\right)\right\Vert ^{2}Rdrd\theta\leq C_{0}\intop_{0}^{2\pi}\intop_{0}^{R}\left\Vert \nabla f\left(x_{0}+re^{i\theta}\right)\right\Vert ^{2}rdrd\theta.
\]
\end{lem}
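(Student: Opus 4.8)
The plan is to normalize the inequality by translation and scaling, and then exploit the rigidity of harmonic functions through a power series expansion of the associated holomorphic derivative. Both sides of the asserted inequality are translation invariant, so I may take $x_0=0$. To remove $R$, set $g(y)=f(Ry)$ for $y\in B(0,1)$, so that $\nabla g(y)=R\,\nabla f(Ry)$; the substitution $r=R\rho$ then turns each integral into the corresponding integral for $g$ over $B(0,1)$, with all powers of $R$ cancelling. Concretely, the left-hand side becomes
\[
\int_{0}^{2\pi}\int_{0}^{1}\|\nabla g(\rho e^{i\theta})\|^{2}\,d\rho\,d\theta,
\]
while the right-hand side becomes
\[
\int_{0}^{2\pi}\int_{0}^{1}\|\nabla g(\rho e^{i\theta})\|^{2}\,\rho\,d\rho\,d\theta .
\]
So it suffices to produce a universal $C_0$ comparing these two quantities for every function $g$ harmonic on $B(0,1)$.

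Next I would pass to the holomorphic derivative. Since $g$ is real and harmonic, the Wirtinger derivative shows that $h:=\partial_x g-i\,\partial_y g$ is holomorphic on $B(0,1)$ and satisfies $|h|^{2}=g_x^{2}+g_y^{2}=\|\nabla g\|^{2}$. Writing the power series $h(z)=\sum_{n\ge 0}a_n z^{n}$ and using the orthogonality relation $\int_{0}^{2\pi}e^{i(m-n)\theta}\,d\theta=2\pi\delta_{mn}$, the angular integral evaluates to
\[
\int_{0}^{2\pi}|h(\rho e^{i\theta})|^{2}\,d\theta=2\pi\sum_{n\ge 0}|a_n|^{2}\rho^{2n}.
\]
Because every summand is nonnegative, I can integrate in $\rho$ term by term (monotone convergence), which gives
\[
\int_{0}^{2\pi}\int_{0}^{1}\|\nabla g\|^{2}\,d\rho\,d\theta=2\pi\sum_{n\ge 0}\frac{|a_n|^{2}}{2n+1},\qquad
\int_{0}^{2\pi}\int_{0}^{1}\|\nabla g\|^{2}\,\rho\,d\rho\,d\theta=2\pi\sum_{n\ge 0}\frac{|a_n|^{2}}{2n+2}.
\]

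Finally, the comparison reduces to the elementary coefficientwise inequality $\frac{1}{2n+1}\le 2\cdot\frac{1}{2n+2}$, valid for every $n\ge 0$ since it is equivalent to $n+1\le 2n+1$. Summing against the nonnegative weights $|a_n|^{2}$ yields the claim with $C_0=2$, irrespective of whether the series (hence the integrals) are finite. I do not expect a genuine obstacle here: the argument is elementary once the normalization and the power-series identity are in place, and the only point deserving a word of care is the term-by-term integration up to the boundary radius $\rho=1$, which is justified by nonnegativity of the summands (so monotone convergence applies and the inequality persists even when the integrals diverge). If one prefers to avoid complex analysis, an equivalent route is to note the direct computation $\Delta\|\nabla g\|^{2}=2(g_{xx}^{2}+2g_{xy}^{2}+g_{yy}^{2})\ge 0$, so that $\|\nabla g\|^{2}$ is subharmonic and its circular mean $m(\rho)$ is nondecreasing, after which the inequality follows from the elementary fact that $\int_{0}^{1}m\le 2\int_{0}^{1}m\rho$ for every nonnegative nondecreasing $m$.
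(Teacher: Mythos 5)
Your argument is correct, and its main line is in essence the paper's second proof of this lemma: the paper also reduces to $R=1$, $x_{0}=0$, expands in Fourier modes, uses orthogonality to reduce the comparison to the coefficientwise inequality $\frac{1}{2n-1}\leq\frac{2}{2n}$, and obtains the same constant $C_{0}=2$. The difference is in execution, and yours is the cleaner version: the paper expands the \emph{boundary values} of $f$ in a Fourier series and therefore has to insert the caveat ``assume that $f$ is smooth on the boundary,'' whereas you expand the holomorphic derivative $h=\partial_{x}g-i\,\partial_{y}g$ as a power series inside the open disk, so no boundary regularity is needed and the term-by-term integration up to $\rho=1$ is justified by monotone convergence, with the inequality persisting in $[0,\infty]$ even when both integrals diverge. (The paper hedges against this gap by also giving a first proof via Harnack's inequality, which yields the much worse constant $4+\frac{32\pi^{2}}{\ln 2}$.) Your closing remark is a genuinely third route not in the paper: $\left\Vert \nabla g\right\Vert ^{2}$ is subharmonic, so its circular mean $m(\rho)$ is nondecreasing, and the Chebyshev-type correlation inequality $\int_{0}^{1}m\,d\rho\leq 2\int_{0}^{1}m\rho\,d\rho$ for nonnegative nondecreasing $m$ finishes the proof; this is the most elementary of the three arguments and also gives $C_{0}=2$.
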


\begin{proof}
Notice first that both the LHS and the RHS integrals are invariant
under translations and dilations of the domain. Explicitly, defining
$\tilde{f}:\overline{B\left(0,1\right)}\rightarrow\mathbb{R}$ by
$\tilde{f}\left(x\right)=f\left(x_{0}+Rx\right)$ we have $\nabla\tilde{f}\left(x\right)=R\nabla f\left(x_{0}+Rx\right)$,
and so for the LHS:
\[
\intop_{0}^{2\pi}\intop_{0}^{1}\left\Vert \nabla\tilde{f}\left(re^{i\theta}\right)\right\Vert ^{2}drd\theta=\intop_{0}^{2\pi}\intop_{0}^{1}\left\Vert \nabla f\left(x_{0}+Rre^{i\theta}\right)\right\Vert ^{2}R^{2}drd\theta=\intop_{0}^{2\pi}\intop_{0}^{R}\left\Vert \nabla f\left(x_{0}+re^{i\theta}\right)\right\Vert ^{2}Rdrd\theta.
\]
Similarly, for the RHS:
\[
C_{0}\intop_{0}^{2\pi}\intop_{0}^{1}\left\Vert \nabla\tilde{f}\left(re^{i\theta}\right)\right\Vert ^{2}rdrd\theta=C_{0}\intop_{0}^{2\pi}\intop_{0}^{1}\left\Vert \nabla f\left(x_{0}+Rre^{i\theta}\right)\right\Vert ^{2}R^{2}rdrd\theta=C_{0}\intop_{0}^{2\pi}\intop_{0}^{R}\left\Vert \nabla f\left(x_{0}+re^{i\theta}\right)\right\Vert ^{2}rdrd\theta.
\]
Thus we may assume WLOG that $R=1$ and $x_{0}=0$. We present two
proofs for the claim: the first using Harnack's inequality and the
second using Fourier analysis.\\
For the first proof, set $\varepsilon_{d}$ to be the RHS (or the
Dirichlet energy), $\varepsilon_{d}=\intop_{0}^{2\pi}\intop_{0}^{1}\left\Vert \nabla f\left(re^{i\theta}\right)\right\Vert ^{2}rdrd\theta$,
and assume that for some $C_{0}>0$ we have:
\[
\intop_{0}^{2\pi}\intop_{0}^{1}\left\Vert \nabla f\left(re^{i\theta}\right)\right\Vert ^{2}drd\theta>C_{0}\cdot\varepsilon_{d}.
\]
Since $\intop_{0}^{2\pi}\intop_{\frac{1}{4}}^{1}\left\Vert \nabla f\left(re^{i\theta}\right)\right\Vert ^{2}drd\theta\leq\intop_{0}^{2\pi}\intop_{\frac{1}{4}}^{1}\left\Vert \nabla f\left(re^{i\theta}\right)\right\Vert ^{2}\cdot4r\cdot drd\theta\leq4\varepsilon_{d}$,
we have:
\begin{align*}
C_{0}\cdot\varepsilon_{d} & <\intop_{0}^{2\pi}\intop_{0}^{\frac{1}{4}}\left\Vert \nabla f\left(re^{i\theta}\right)\right\Vert ^{2}drd\theta+4\varepsilon_{d},
\end{align*}
so we find that:
\[
\intop_{0}^{2\pi}\intop_{0}^{\frac{1}{4}}\left\Vert \nabla f\left(re^{i\theta}\right)\right\Vert ^{2}drd\theta>\left(C_{0}-4\right)\varepsilon_{d}.
\]
Therefore, there exists some $x_{0}\in B\left(0,\frac{1}{4}\right)$
such that $\left\Vert \nabla f\left(x_{0}\right)\right\Vert ^{2}>\frac{4\left(C_{0}-4\right)}{2\pi}\varepsilon_{d}$.
By Lemma \ref{lem: max-min bounds gradient} applied to $f$ on $\overline{B\left(x_{0},\frac{1}{4}\right)}$
we deduce that:
\begin{equation}
\left(\max_{x\in\overline{B\left(x_{0},\frac{1}{4}\right)}}f\left(x\right)-\min_{x\in\overline{B\left(x_{0},\frac{1}{4}\right)}}f\left(x\right)\right)^{2}\geq\frac{\left(\frac{1}{4}\right)^{2}}{4}\cdot\left\Vert \nabla f\left(x_{0}\right)\right\Vert ^{2}>\frac{1}{64}\cdot\frac{4\left(C_{0}-4\right)}{2\pi}\varepsilon_{d}.\label{eq: max-min}
\end{equation}
Fix some $1>r\geq\frac{1}{2}$. By the maximum and minimum principles
for harmonic functions, $f$ restricted to $\overline{B\left(0,r\right)}$
achieves its maximum and minimum on the boundary $\partial B\left(0,r\right)$.
Thus, by inequality \ref{eq: max-min} and the fact that $\overline{B\left(x_{0},\frac{1}{4}\right)}\subseteq\overline{B\left(0,r\right)}$
we get that:
\[
\left(\max_{x\in\partial B\left(0,r\right)}f\left(x\right)-\min_{x\in\partial B\left(0,r\right)}f\left(x\right)\right)^{2}>\frac{1}{64}\cdot\frac{4\left(C_{0}-4\right)}{2\pi}\varepsilon_{d}.
\]
Let $\theta_{1},\theta_{2}$ be angles such that:
\[
f\left(re^{i\theta_{1}}\right)=\min_{x\in\partial B\left(0,r\right)}f\left(x\right),\ \ f\left(re^{i\theta_{2}}\right)=\max_{x\in\partial B\left(0,r\right)}f\left(x\right).
\]
Let $c$ be the shorter circular arc with center at $0$ going from
$re^{i\theta_{1}}$ to $re^{i\theta_{2}}$. We use Cauchy-Schwartz
for line integrals to get:
\begin{align*}
\left(f\left(re^{i\theta_{2}}\right)-f\left(re^{i\theta_{1}}\right)\right)^{2} & =\left(\intop_{c}\nabla f\cdot d\underline{r}\right)^{2}\leq\left(\intop_{c}\left\Vert \nabla f\right\Vert ^{2}ds\right)\cdot length\left(c\right)\leq\left(\intop_{\partial B\left(0,r\right)}\left\Vert \nabla f\right\Vert ^{2}ds\right)\cdot\pi r,
\end{align*}
And so:
\[
\intop_{0}^{2\pi}\left\Vert \nabla f\left(re^{i\theta}\right)\right\Vert ^{2}rd\theta>\frac{1}{\pi r}\cdot\left(\frac{1}{64}\cdot\frac{4\left(C_{0}-4\right)}{2\pi}\varepsilon_{d}\right)=\frac{\left(C_{0}-4\right)}{32\pi^{2}r}\cdot\varepsilon_{d}.
\]
Finally we get:
\begin{align*}
\varepsilon_{d} & =\intop_{0}^{\frac{1}{2}}\intop_{0}^{2\pi}\left\Vert \nabla f\left(re^{i\theta}\right)\right\Vert ^{2}rd\theta dr+\intop_{\frac{1}{2}}^{1}\intop_{0}^{2\pi}\left\Vert \nabla f\left(re^{i\theta}\right)\right\Vert ^{2}rd\theta dr\geq\\
 & \geq\intop_{\frac{1}{2}}^{1}\intop_{0}^{2\pi}\left\Vert \nabla f\left(re^{i\theta}\right)\right\Vert ^{2}rd\theta dr>\intop_{\frac{1}{2}}^{1}\frac{\left(C_{0}-4\right)}{32\pi^{2}r}\cdot\varepsilon_{d}dr=\frac{\left(C_{0}-4\right)\ln2}{32\pi^{2}}\cdot\varepsilon_{d}.
\end{align*}
This implies $C_{0}\leq4+\frac{32\pi^{2}}{\ln2}$, and the Lemma follows
for any value of $C_{0}$ larger than $4+\frac{32\pi^{2}}{\ln2}$.\\
For the second proof, assume that $f$ is smooth on the boundary.
Write the Fourier expansion of $f$ restricted to the unit circle:
$f\left(1,\theta\right)=a_{0}+\sum_{n=1}^{\infty}\left(a_{n}\cos\left(n\theta\right)+b_{n}\sin\left(n\theta\right)\right)$.
Then we have:
\[
f\left(re^{i\theta}\right)=a_{0}+\sum_{n=1}^{\infty}r^{n}\left(a_{n}\cos\left(n\theta\right)+b_{n}\sin\left(n\theta\right)\right).
\]
The gradient is:
\begin{align*}
\nabla f\left(re^{i\theta}\right) & =\frac{\partial f}{\partial r}\cdot\hat{r}+\frac{1}{r}\frac{\partial f}{\partial\theta}\cdot\hat{\theta}=\\
 & =\left(\sum_{n=1}^{\infty}nr^{n-1}\left(a_{n}\cos\left(n\theta\right)+b_{n}\sin\left(n\theta\right)\right)\right)\hat{r}+\left(\sum_{n=1}^{\infty}nr^{n-1}\left(-a_{n}\sin\left(n\theta\right)+b_{n}\cos\left(n\theta\right)\right)\right)\hat{\theta}.
\end{align*}
Using the orthogonality of $\hat{r},\hat{\theta}$ we find:
\begin{align*}
\left\Vert \nabla f\left(re^{i\theta}\right)\right\Vert ^{2} & =\left(\sum_{n=1}^{\infty}nr^{n-1}\left(a_{n}\cos\left(n\theta\right)+b_{n}\sin\left(n\theta\right)\right)\right)^{2}+\left(\sum_{n=1}^{\infty}nr^{n-1}\left(-a_{n}\sin\left(n\theta\right)+b_{n}\cos\left(n\theta\right)\right)\right)^{2}=\\
 & =\sum_{n,m=1}^{\infty}nmr^{n+m-2}\left(a_{n}\cos\left(n\theta\right)+b_{n}\sin\left(n\theta\right)\right)\left(a_{m}\cos\left(m\theta\right)+b_{m}\sin\left(m\theta\right)\right)+\\
 & +\sum_{n,m=1}^{\infty}nmr^{n+m-2}\left(-a_{n}\sin\left(n\theta\right)+b_{n}\cos\left(n\theta\right)\right)\left(-a_{m}\sin\left(m\theta\right)+b_{m}\cos\left(m\theta\right)\right).
\end{align*}
We integrate on theta first, and use the orthogonality of the Fourier
basis:
\begin{align*}
\intop_{0}^{2\pi}\left\Vert \nabla f\left(re^{i\theta}\right)\right\Vert ^{2}d\theta & =\sum_{n=1}^{\infty}n^{2}r^{2n-2}\left(a_{n}^{2}\cdot\pi+b_{n}^{2}\cdot\pi\right)+\sum_{n=1}^{\infty}n^{2}r^{2n-2}\left(a_{n}^{2}\cdot\pi+b_{n}^{2}\cdot\pi\right)=\\
 & =2\pi\sum_{n=1}^{\infty}n^{2}r^{2n-2}\left(a_{n}^{2}+b_{n}^{2}\right).
\end{align*}
Now the LHS of the Lemma's statement is:
\begin{align*}
\intop_{0}^{1}\intop_{0}^{2\pi}\left\Vert \nabla f\left(re^{i\theta}\right)\right\Vert ^{2}d\theta dr & =\intop_{0}^{1}\left(2\pi\sum_{n=1}^{\infty}n^{2}r^{2n-2}\left(a_{n}^{2}+b_{n}^{2}\right)\right)dr=\sum_{n=1}^{\infty}\frac{2\pi n^{2}}{2n-1}\left(a_{n}^{2}+b_{n}^{2}\right).
\end{align*}
The RHS is:
\[
\intop_{0}^{1}\intop_{0}^{2\pi}\left\Vert \nabla f\left(re^{i\theta}\right)\right\Vert ^{2}d\theta\cdot rdr=\intop_{0}^{1}\left(2\pi\sum_{n=1}^{\infty}n^{2}r^{2n-1}\left(a_{n}^{2}+b_{n}^{2}\right)\right)dr=\sum_{n=1}^{\infty}\frac{2\pi n^{2}}{2n}\left(a_{n}^{2}+b_{n}^{2}\right).
\]
For each $n\in\mathbb{N}$ we have $\frac{1}{2n-1}\leq\frac{2}{2n}$,
so taking $C_{0}=2$ finishes the proof.
\end{proof}
\begin{defn}
For any $\alpha>0$, a polygon $P$ is called \textbf{$\alpha$-nice}
if all of its angles are larger than $\alpha$.
\end{defn}

\begin{defn}
A polygon $P$ is called \textbf{tangential }if there is a circle
tangent to all of its sides.
\end{defn}

\begin{lem}
\label{lem: integration on polygon}For every angle $\alpha>0$ there
exists some $C_{1}\left(\alpha\right)>0$ such that for every $\alpha$-nice
tangential polygon $P$ and for every $f$ continuous in $P$ and
harmonic in the interior of $P$ we have:
\[
\intop_{\partial P}\intop_{T_{az}}\left\Vert \nabla f\left(w\right)\right\Vert ^{2}dwdz\leq C_{1}\left(\alpha\right)\cdot\iintop_{P}\left\Vert \nabla f\right\Vert ^{2}dA,
\]
where $a$ is the center of the circle inscribed in $P$ and $T_{az}$
is the straight line segment connecting $a$ to $z$.
\begin{proof}
\noindent Both sides of the inequality are invariant under translations
and dilations of $P$. Therefore, we may assume that $a=0$ and that
the radius of the inscribed circle is $1$. Let $\theta_{1}\leq\theta_{2}\leq...\leq\theta_{n}$
be the angles of the tangency points of the unit circle and the sides
of the polygon. Set $\theta_{n+1}=\theta_{1}$ for ease of notation.
For each $1\leq i\leq n$, write $\varphi_{i}=\frac{1}{2}\left(\theta_{i+1}-\theta_{i}\right)$
as in Figure \ref{fig: Tangential polygon}.\\
\begin{figure}[H]
\includegraphics[scale=0.6]{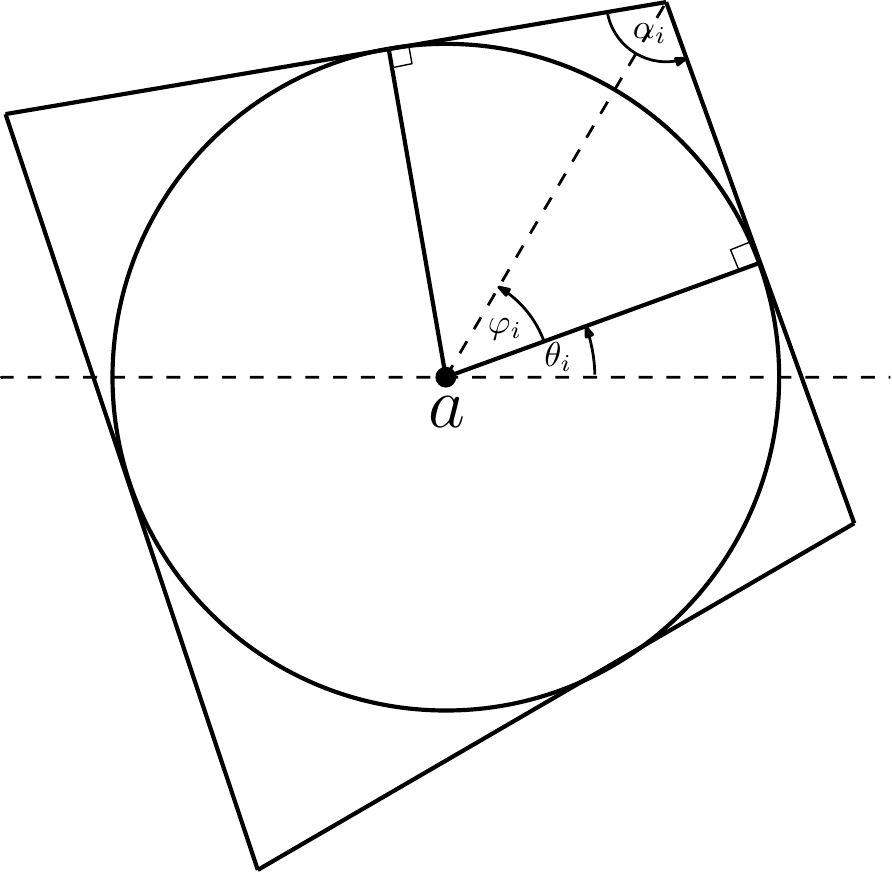}\caption{\label{fig: Tangential polygon}Tangential polygon and its incircle}
\end{figure}
 The boundary $\partial P$ of the polygon is given in polar coordinates
by:
\[
r\left(\theta\right)=\begin{cases}
\frac{1}{\cos\left(\theta-\theta_{i}\right)}, & 0\leq\theta-\theta_{i}\leq\varphi_{i}\\
\frac{1}{\cos\left(\theta_{i+1}-\theta\right)} & 0\leq\theta_{i+1}-\theta\leq\varphi_{i}
\end{cases}\ \ \ \Longrightarrow\ \ \ \frac{dr}{d\theta}=\begin{cases}
\frac{\sin\left(\theta-\theta_{i}\right)}{\cos^{2}\left(\theta-\theta_{i}\right)}, & 0\leq\theta-\theta_{i}\leq\varphi_{i}\\
-\frac{\sin\left(\theta_{i+1}-\theta\right)}{\cos^{2}\left(\theta_{i+1}-\theta\right)} & 0\leq\theta_{i+1}-\theta\leq\varphi_{i}
\end{cases}.
\]
 The length element is $ds=\sqrt{r^{2}+\left(\frac{dr}{d\theta}\right)^{2}}d\theta$,
and letting $\alpha_{i}=\pi-2\varphi_{i}$ be the $i$-th angle of
the polygon, we bound this factor:
\[
\sqrt{r^{2}+\left(\frac{dr}{d\theta}\right)^{2}}=\begin{cases}
\frac{1}{\cos^{2}\left(\theta-\theta_{i}\right)}, & 0\leq\theta-\theta_{i}\leq\varphi_{i}\\
\frac{1}{\cos^{2}\left(\theta_{i+1}-\theta\right)}, & 0\leq\theta_{i+1}-\theta\leq\varphi_{i}
\end{cases}\leq\frac{1}{\cos^{2}\left(\varphi_{i}\right)}=\frac{1}{\sin^{2}\left(\frac{\alpha_{i}}{2}\right)}\leq\frac{1}{\sin^{2}\left(\frac{\alpha}{2}\right)}.
\]
Setting $C_{1}'=\frac{1}{\sin^{2}\left(\frac{\alpha}{2}\right)}$,
we bound the integral:
\begin{align*}
\intop_{\partial P}\intop_{T_{0,z}}\left\Vert \nabla f\left(w\right)\right\Vert ^{2}dwdz & =\intop_{0}^{2\pi}\left(\intop_{0}^{r\left(\theta\right)}\left\Vert \nabla f\left(re^{i\theta}\right)\right\Vert ^{2}dr\right)\sqrt{r\left(\theta\right)^{2}+\left(\frac{dr\left(\theta\right)}{d\theta}\right)^{2}}d\theta\leq\\
 & \leq C_{1}'\cdot\intop_{0}^{2\pi}\intop_{0}^{r\left(\theta\right)}\left\Vert \nabla f\left(re^{i\theta}\right)\right\Vert ^{2}drd\theta.
\end{align*}
We now divide the polygon into two regions and bound the integral
on each of them: The first region is the unit disk $\mathbb{D}$,
on which we use Lemma \ref{claim: integration on disc} to get:
\[
\underbrace{\intop_{0}^{2\pi}\intop_{0}^{1}\left\Vert \nabla f\left(re^{i\theta}\right)\right\Vert ^{2}drd\theta}_{I_{1}}\leq C_{0}\iintop_{\mathbb{D}}\left\Vert \nabla f\right\Vert ^{2}dA\leq C_{0}\iintop_{P}\left\Vert \nabla f\right\Vert ^{2}dA.
\]
For the remaining region we have $r\geq1$ so we get:
\[
\underbrace{\intop_{0}^{2\pi}\intop_{1}^{r\left(\theta\right)}\left\Vert \nabla f\left(re^{i\theta}\right)\right\Vert ^{2}drd\theta}_{I_{2}}\leq\intop_{0}^{2\pi}\intop_{1}^{r\left(\theta\right)}\left\Vert \nabla f\left(re^{i\theta}\right)\right\Vert ^{2}\cdot rdrd\theta=\iintop_{P\backslash\mathbb{D}}\left\Vert \nabla f\right\Vert ^{2}dA\leq\iintop_{P}\left\Vert \nabla f\right\Vert ^{2}dA.
\]
Setting $C_{1}=C_{1}'\left(1+C_{0}\right)$ we finish the proof:
\begin{align*}
\intop_{\partial P}\intop_{T_{0,z}}\left\Vert \nabla f\left(w\right)\right\Vert ^{2}dwdz & \leq C_{1}'\cdot\intop_{0}^{2\pi}\intop_{0}^{r\left(\theta\right)}\left\Vert \nabla f\left(r\cdot e^{i\theta}\right)\right\Vert ^{2}drd\theta=C_{1}'\cdot\left(I_{1}+I_{2}\right)\leq\\
 & \leq C_{1}'\left(C_{0}+1\right)\iintop_{P}\left\Vert \nabla f\right\Vert ^{2}dA=C_{1}\iintop_{P}\left\Vert \nabla f\right\Vert ^{2}dA.
\end{align*}
\end{proof}
\end{lem}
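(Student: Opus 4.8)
The plan is to pass to polar coordinates centered at the inscribed circle and exploit scale-invariance. First I would observe that both sides are invariant under translations and dilations of $P$, so I may assume $a=0$ and that the inscribed circle is the unit disk $\mathbb{D}$. Each side of a tangential polygon lies on a line at distance $1$ from the center, so on the side tangent at angle $\theta_i$ the boundary curve is $r(\theta)=1/\cos(\theta-\theta_i)$. With $z=r(\theta)e^{i\theta}$, the inner integral $\int_{T_{az}}\|\nabla f(w)\|^2\,dw$ is just the radial integral $\int_0^{r(\theta)}\|\nabla f(re^{i\theta})\|^2\,dr$, and the outer integration over $\partial P$ supplies the arc-length element $\sqrt{r(\theta)^2+r'(\theta)^2}\,d\theta$.

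The first key step is to bound this arc-length factor using $\alpha$-niceness. A direct differentiation gives $\sqrt{r^2+(r')^2}=1/\cos^2(\theta-\theta_i)$ on each side, and since the half-angle $\varphi_i$ subtended by a side satisfies $\pi-2\varphi_i>\alpha$, this factor is at most $1/\cos^2(\varphi_i)=1/\sin^2(\alpha_i/2)\le 1/\sin^2(\alpha/2)=:C_1'$. Pulling this constant out reduces the left-hand side to $C_1'\int_0^{2\pi}\int_0^{r(\theta)}\|\nabla f(re^{i\theta})\|^2\,dr\,d\theta$, whose only discrepancy from the area integral $\iint_P\|\nabla f\|^2\,dA=\int_0^{2\pi}\int_0^{r(\theta)}\|\nabla f\|^2\,r\,dr\,d\theta$ is the missing Jacobian factor $r$ in the radial variable.

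The second step is to absorb this discrepancy by splitting $P$ into the inscribed disk $\mathbb{D}$ and the remainder $P\setminus\mathbb{D}$. On $P\setminus\mathbb{D}$ we have $r\ge 1$, hence $dr\le r\,dr$ pointwise and this part is dominated outright by $\iint_{P\setminus\mathbb{D}}\|\nabla f\|^2\,dA$. The delicate region is the disk, where $r\to 0$ makes the unweighted measure $dr$ overcount relative to $r\,dr$; here I would invoke Lemma \ref{claim: integration on disc} with $R=1$ (which is licit since $\mathbb{D}\subseteq P$ forces $f$ to be harmonic on $\mathbb{D}$) to obtain $\int_0^{2\pi}\int_0^1\|\nabla f\|^2\,dr\,d\theta\le C_0\iint_{\mathbb{D}}\|\nabla f\|^2\,dA$. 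Summing the two regions bounds the radial integral by $(1+C_0)\iint_P\|\nabla f\|^2\,dA$, so the lemma holds with $C_1(\alpha)=C_1'(1+C_0)=(1+C_0)/\sin^2(\alpha/2)$.

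I expect the main obstacle to be exactly the behavior near the center $a$. The $\alpha$-niceness hypothesis is only needed to keep the boundary arc-length under control and is essentially soft; the real content is that, for small $r$, the radial measure $dr$ cannot be compared to $r\,dr$ for a general function, since a gradient spike near the origin would inflate $\int dr$ while contributing almost nothing to the Dirichlet energy $\int r\,dr$. Harmonicity is what rules this out, and the whole weight of that argument is already packaged in Lemma \ref{claim: integration on disc}; accordingly, the crux of the present proof is simply recognizing that the inscribed disk is the right region on which to apply that lemma.
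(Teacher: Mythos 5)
Your proposal matches the paper's proof essentially step for step: the same reduction to the unit incircle, the same polar parametrization and arc-length bound $1/\sin^{2}(\alpha/2)$ from $\alpha$-niceness, the same split into $\mathbb{D}$ and $P\setminus\mathbb{D}$ with Lemma \ref{claim: integration on disc} handling the disk, and the same final constant $C_{1}'(1+C_{0})$. It is correct as written.
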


\begin{defn}
Given a polygon $P$ and a side $S$ of $\partial P$, we say that
$S$ is an\textbf{ $\alpha$-nice side} of $P$ if both angles incident
to $S$ are larger than $\alpha$.
\end{defn}

\begin{lem}
\label{lem: Integration on parts of a polygon}Let $0<\alpha<\pi$.
Then in the notation of Lemma \ref{lem: integration on polygon} and
using the same constant $C_{1}\left(\alpha\right)$, for every $P$
tangential polygon, $S_{1},S_{2},...,S_{k}$ distinct $\alpha$-nice
sides of $P$ and $f$ continuous on $P$ and harmonic in the interior
of $P$ we have:
\[
\sum_{j=1}^{k}\intop_{S_{j}}\intop_{T_{az}}\left\Vert \nabla f\left(w\right)\right\Vert ^{2}dwdz\leq C_{1}\left(\alpha\right)\cdot\iintop_{P}\left\Vert \nabla f\right\Vert ^{2}dA.
\]
\end{lem}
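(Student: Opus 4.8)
The plan is to run the argument of Lemma \ref{lem: integration on polygon} side by side rather than over the whole boundary at once, so that the estimate on the length element only ever invokes the two angles adjacent to a given side. Using the translation- and dilation-invariance of both sides of the inequality, I first normalize so that the incenter is $a=0$ and the inradius equals $1$. Exactly as in Lemma \ref{lem: integration on polygon}, the boundary is then described in polar coordinates by a single-valued function $r(\theta)$, $\theta\in[0,2\pi)$, and each side $S$ of $P$ corresponds to a contiguous angular interval $I_S\subseteq[0,2\pi)$, with distinct sides giving intervals whose interiors are disjoint.

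The crucial observation is that the length-element bound in Lemma \ref{lem: integration on polygon} is \emph{local to a single side}. On the interval $I_{S_j}$ of a side $S_j$, the tangency point splits $I_{S_j}$ into two pieces on which $\sqrt{r^2+(dr/d\theta)^2}$ equals $\cos^{-2}(\theta-\theta_i)$ and $\cos^{-2}(\theta_{i+1}-\theta)$ respectively; these are maximized as $\theta$ approaches the two endpoints of $S_j$, where the factor equals $\cos^{-2}(\varphi)=\sin^{-2}(\beta/2)$, with $\beta=\pi-2\varphi$ the corresponding incident angle of $P$. Since $S_j$ is $\alpha$-nice, both of its incident angles exceed $\alpha$, so both of these values are at most $C_1':=\sin^{-2}(\alpha/2)$. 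Hence $\sqrt{r^2+(dr/d\theta)^2}\le C_1'$ throughout $I_{S_j}$ — the same bound as before, but now requiring only the two angles adjacent to $S_j$ to be large, not all angles of $P$.

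With this in hand I would write
\[
\sum_{j=1}^{k}\intop_{S_j}\intop_{T_{az}}\left\Vert \nabla f(w)\right\Vert ^{2}\,dw\,dz=\sum_{j=1}^{k}\intop_{I_{S_j}}\left(\intop_{0}^{r(\theta)}\left\Vert \nabla f(re^{i\theta})\right\Vert ^{2}\,dr\right)\sqrt{r(\theta)^{2}+\left(\frac{dr(\theta)}{d\theta}\right)^{2}}\,d\theta\le C_1'\sum_{j=1}^{k}\intop_{I_{S_j}}\intop_{0}^{r(\theta)}\left\Vert \nabla f(re^{i\theta})\right\Vert ^{2}\,dr\,d\theta.
\]
Because the intervals $I_{S_j}$ have disjoint interiors and the integrand is nonnegative, the sum over them is at most the integral over all of $[0,2\pi)$, so the right-hand side is bounded by $C_1'\intop_{0}^{2\pi}\intop_{0}^{r(\theta)}\left\Vert \nabla f(re^{i\theta})\right\Vert ^{2}\,dr\,d\theta$. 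From here the argument is verbatim that of Lemma \ref{lem: integration on polygon}: split the domain into the unit disk $\mathbb{D}$, where Lemma \ref{claim: integration on disc} converts the $dr\,d\theta$ integral into $C_0\iintop_{\mathbb{D}}\left\Vert \nabla f\right\Vert ^{2}dA$, and the complementary region $\{r\ge1\}$, where inserting the harmless factor $r\ge1$ turns the integral into $\iintop_{P\setminus\mathbb{D}}\left\Vert \nabla f\right\Vert ^{2}dA$. This yields the claim with the same constant $C_1(\alpha)=C_1'(1+C_0)$ as in Lemma \ref{lem: integration on polygon}.

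Since the hypotheses differ from Lemma \ref{lem: integration on polygon} only in replacing global $\alpha$-niceness of $P$ by per-side $\alpha$-niceness, there is no genuine obstacle; the one point requiring care is the localization of the length-element estimate to a single side, which is precisely what the definition of an $\alpha$-nice side supplies. Note that $P$ itself need not be $\alpha$-nice: the sides that are omitted or fail to be nice simply contribute nothing to the left-hand side and are discarded harmlessly when passing from $\sum_{j}\intop_{I_{S_j}}$ to $\intop_{0}^{2\pi}$.
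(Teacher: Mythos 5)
Your proof is correct, but it takes a genuinely different route from the paper. The paper keeps Lemma \ref{lem: integration on polygon} as a black box: it constructs an $\alpha$-nice tangential polygon $P'\subseteq P$ with the same incircle by an iterative vertex-replacement process (cutting each sharp corner with a tangent line, which replaces an angle $\beta$ by two angles $\tfrac{1}{2}\beta+\tfrac{\pi}{2}$, and repeating until all angles exceed $\alpha$ --- this is where the hypothesis $\alpha<\pi$ is used), observes that each $\alpha$-nice side $S_j$ survives as a side of $P'$, applies Lemma \ref{lem: integration on polygon} to $P'$, and finishes with $\iint_{P'}\le\iint_{P}$. You instead reopen the proof of Lemma \ref{lem: integration on polygon} and localize it: the bound $\sqrt{r^2+(dr/d\theta)^2}\le\sin^{-2}(\alpha/2)$ on the angular sector of a side only involves the two angles incident to that side, so per-side niceness suffices there, and the remaining steps (extending the sum over the disjoint sectors $I_{S_j}$ to all of $[0,2\pi)$ by nonnegativity, then splitting into $\mathbb{D}$ and $P\setminus\mathbb{D}$) never use niceness at all. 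Both arguments are valid and yield the same constant $C_1(\alpha)=\sin^{-2}(\alpha/2)(1+C_0)$. The paper's version buys modularity at the price of a geometric construction and the termination argument; yours avoids that construction entirely (and in particular does not need $\alpha<\pi$, though that case is vacuous for convex polygons) at the price of not being able to cite the previous lemma as stated. The one point worth tightening in your write-up is the indexing when you describe the two halves of $I_{S_j}$: in the paper's notation the interval $[\theta_i,\theta_{i+1}]$ between consecutive tangency points straddles a vertex rather than a single side, so your $I_{S_j}$ is $[\theta_j-\varphi_{j-1},\theta_j+\varphi_j]$ and the length element there is $\cos^{-2}(\theta-\theta_j)$ throughout; this is a cosmetic slip, not a gap.
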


\begin{proof}
\noindent We prove this by finding an $\alpha$-nice tangential polygon
$P'\subseteq P$. Consider a vertex $L$ of $\partial P$ with angle
$\beta$. If $\beta>\alpha$ we are done. Otherwise, we replace $L$
with two vertices $L_{1},L_{2}$ in the following way, illustrated
in Figure \ref{fig: Polygon vertex replacemt}. Denote by $X$ the
intersection of the line segment $aL$ and the incircle. Pass a tangent
to the circle at $X$, and denote by $L_{1},L_{2}$ the intersection
points of this line with the two tangents from $L$ to the circle.
For each $i\in\left\{ 1,2\right\} $, the angle facing the circle
at $L_{i}$ is $\frac{1}{2}\beta+\frac{1}{2}\pi$ since it is an exterior
angle to the triangle $LXL_{i}$ which has remote interior angles
$\frac{1}{2}\beta$ and $\frac{1}{2}\pi$. If $\frac{1}{2}\beta+\frac{\pi}{2}>\alpha$
we are done. Otherwise, perform this step again on each of $L_{1},L_{2}$
and repeat. After $n$ steps the angle is $\frac{1}{2^{n}}\beta+\left(1-\frac{1}{2^{n}}\right)\pi$
and since $\alpha<\pi$ and $\lim_{n\rightarrow\infty}\left(\frac{1}{2^{n}}\beta+\left(1-\frac{1}{2^{n}}\right)\pi\right)=\pi$,
for $n$ large enough we will have an angle larger than $\alpha$.
Perform this process on each vertex of $\partial P$ to get an $\alpha$-nice
tangential polygon $P'\subseteq P$ . Notice that all the tangency
points of $P$ are also tangency points in $P'$.
\begin{figure}[H]
\includegraphics[scale=0.4]{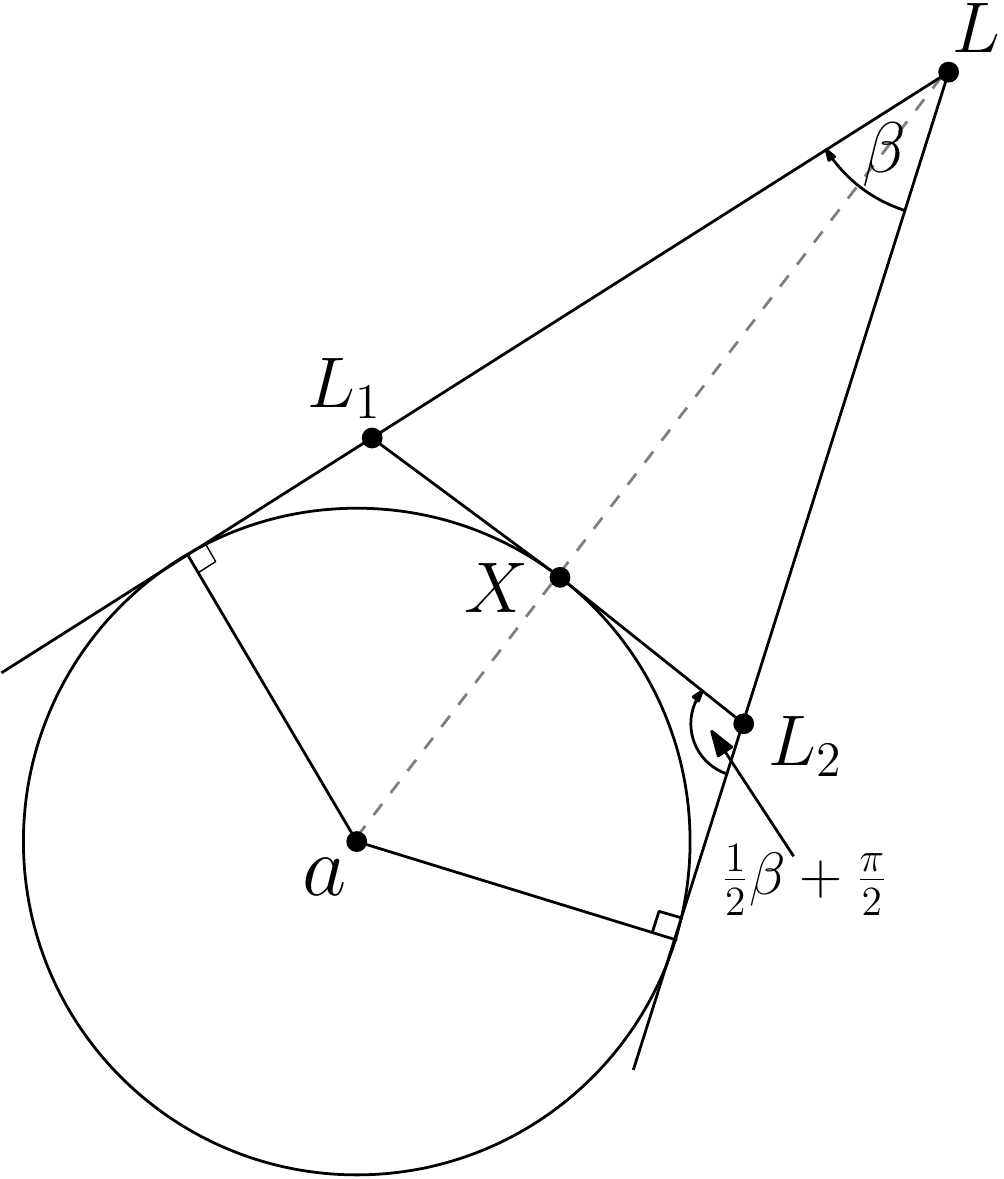}\caption{\label{fig: Polygon vertex replacemt}Polygon vertex replacement process.}
\end{figure}
For every $i$, the two vertices incident to the side $S_{i}$ had
angles larger than $\alpha$ to begin with, so $S_{i}$ is also a
side of $\partial P'$, and thus:
\[
\sum_{i=1}^{k}\intop_{S_{i}}\intop_{T_{az}}\left\Vert \nabla f\left(w\right)\right\Vert ^{2}dwdz\leq\intop_{\partial P'}\intop_{T_{az}}\left\Vert \nabla f\left(w\right)\right\Vert ^{2}dwdz.
\]
$f$ is continuous on $P'$ and harmonic in its interior because $P'\subseteq P$,
so using Lemma \ref{lem: integration on polygon} on $P'$ we get:
\[
\intop_{\partial P'}\intop_{T_{az}}\left\Vert \nabla f\left(w\right)\right\Vert ^{2}dwdz\leq C_{1}\left(\alpha\right)\cdot\iintop_{P'}\left\Vert \nabla f\right\Vert ^{2}dA.
\]
Finally, since $P'\subseteq P$:
\[
C_{1}\left(\alpha\right)\cdot\iintop_{P'}\left\Vert \nabla f\right\Vert ^{2}dA\leq C_{1}\left(\alpha\right)\cdot\iintop_{P}\left\Vert \nabla f\right\Vert ^{2}dA.
\]
 Putting together the three inequalities above we are done.
\end{proof}

\section{The General Case\label{sec:The-General-Case}}

\subsection{A Useful Coupling}

The goal of this subsection is to prove Corollary \ref{cor: coupling},
which says that given a circle-packed infinite triangulation, we can
add under certain conditions circles to the packing such that the
weighted random walks on the original and new circle packings can
be coupled. We begin with a few Lemmas about reversible Markov chains.
\begin{defn}
A Markov chain on a state space $V$ is said to be \textbf{represented
by a network} $\left(G,c\right)$ where $G$ has vertex set $V$ if
the chain and the weighted random walk on $\left(G,c\right)$ share
the same transition matrix.
\end{defn}

\noindent Note that in Definition \ref{def: network} of a network,
the set of edges $E$ is somewhat superfluous: It is sometimes more
convenient to compactly represent the network by the pair $\left(V,c\right)$
where $c:V\times V\rightarrow[0,\infty)$ is extended to take the
value 0 on pairs of vertices not in $E$, and the edge set $E$ can
be recovered as the support of $c$. \\
The weighted random walk on a network $\left(G,c\right)$ is an irreducible
reversible Markov chain: Irreducibility follows from the requirement
that $G$ be connected, and the sum $\pi\left(v\right)$ of the weights
around a vertex $v$ is easily seen to be a reversible measure. Conversely,
every irreducible reversible Markov chain can be represented by some
network: Indeed, given a reversible positive measure $\pi:V\rightarrow\left(0,\infty\right)$
on the states, a possible weight function is $c_{uv}=\pi\left(u\right)P_{u,v}$,
where $P$ is the chain's transition matrix. The weights of the representing
network are only unique up to a multiplicative constant. See chapter
2 of \cite{lyons2017probability} for some background on reversible
Markov chains.
\begin{defn}
Let $\left(X_{n}\right)_{n\in\mathbb{N}}$ be a Markov chain on a
state space $V$, and let $W\subseteq V$. The random sequence $\left(Y_{n}\right)_{n\in\mathbb{N}}$
obtained from $\left(X_{n}\right)_{n\in\mathbb{N}}$ by deleting all
appearances of $W$ is called the \textbf{Markov chain censored to
$V\backslash W$}.
\end{defn}

\noindent The subset $W$ usually has the property that the event
$\left\{ \exists N\text{ such that }\forall n\geq N,\ X_{n}\in W\right\} $
has probability zero so that the sequence $\left(Y_{n}\right)_{n\in\mathbb{N}}$
is almost surely an infinite sequence. In particular, this is the
case when the chain is irreducible and $W$ is a finite proper subset
of $V$. The censored chain (also known as the \emph{watched chain})
is itself a Markov chain, as shown in Lemma 6-6 of \cite{kemeny2012denumerable}.
The following Lemma shows that censoring also preserves reversibility,
and tells us how to transform the corresponding network when censoring
a single state:
\begin{lem}
\noindent \label{lem: censoring a single state}Let $\left(V,c\right)$
be a network and let $V':=V\backslash\left\{ w\right\} $ for some
$w\in V$. Then the censoring to $V'$ of the weighed random walk
on $\left(V,c\right)$ is a reversible Markov chain represented by
the network $\left(V',c'\right)$, where $c':V'\times V'\rightarrow[0,\infty)$
is given by $c'_{xy}=c_{xy}+\frac{c_{xw}c_{yw}}{\pi\left(w\right)-c_{ww}}$.
\end{lem}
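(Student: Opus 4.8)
The plan is to compute the one-step transition probabilities of the censored chain directly and to check that they coincide with those of the weighted random walk on the proposed network $\left(V',c'\right)$. The censored chain is a Markov chain by Lemma 6-6 of \cite{kemeny2012denumerable}, so it suffices to determine, for $x,y\in V'$, the probability that starting from $x$ the first recorded state (after all appearances of $w$ are deleted) is $y$. I will obtain this probability by conditioning on whether the very first step from $x$ lands directly in $V'$ or enters $w$.

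The first thing to handle carefully is the passage through $w$. While the walk sits at $w$, at each step it either takes the self-loop, with probability $c_{ww}/\pi\left(w\right)$, or exits to some neighbour $y\in V'$, with probability $c_{wy}/\pi\left(w\right)$. Since every visit to $w$, self-loops included, is censored, the recorded exit is the first non-self-loop step. Because $\pi\left(w\right)-c_{ww}=\sum_{y\neq w}c_{wy}>0$ (positivity follows from connectedness of $G$, which forces $w$ to have a neighbour other than itself), this exit occurs almost surely after a geometric number of steps, and conditioning on it yields the exit distribution $c_{wy}/\left(\pi\left(w\right)-c_{ww}\right)$. Combining this with the first step from $x$, which reaches $y\in V'$ directly with probability $c_{xy}/\pi\left(x\right)$ or enters $w$ with probability $c_{xw}/\pi\left(x\right)$, the censored transition probability becomes
\[
P'_{x,y}=\frac{c_{xy}}{\pi\left(x\right)}+\frac{c_{xw}}{\pi\left(x\right)}\cdot\frac{c_{wy}}{\pi\left(w\right)-c_{ww}}=\frac{1}{\pi\left(x\right)}\left(c_{xy}+\frac{c_{xw}c_{yw}}{\pi\left(w\right)-c_{ww}}\right)=\frac{c'_{xy}}{\pi\left(x\right)},
\]
where I use the symmetry $c_{wy}=c_{yw}$.

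It then remains to recognise the right-hand side as a weighted-random-walk transition probability on $\left(V',c'\right)$, and the key computation is that the total weight at $x$ is preserved:
\[
\sum_{y\in V'}c'_{xy}=\sum_{y\in V'}c_{xy}+\frac{c_{xw}}{\pi\left(w\right)-c_{ww}}\sum_{y\in V'}c_{wy}=\left(\pi\left(x\right)-c_{xw}\right)+c_{xw}=\pi\left(x\right),
\]
using $\sum_{y\in V'}c_{xy}=\pi\left(x\right)-c_{xw}$ and $\sum_{y\in V'}c_{wy}=\pi\left(w\right)-c_{ww}$. Hence the WRW on $\left(V',c'\right)$ assigns $x\to y$ exactly probability $c'_{xy}/\pi\left(x\right)=P'_{x,y}$, so the two chains share a transition matrix, which is the meaning of representation.

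Finally I will note that $c'$ is a legitimate weight function: it is nonnegative and finite, symmetric in $x,y$ (the correction term is manifestly symmetric), and its support gives a connected graph since the censored chain inherits irreducibility from the original. Reversibility is then automatic, as every weighted random walk on a network is reversible; in fact the displayed identity shows the reversing measure is simply $\pi$ restricted to $V'$. The step I expect to demand the most care is the exit-distribution argument at $w$, since the self-loop $c_{ww}$ must be tracked consistently both in the geometric exit calculation and in the bookkeeping of $\pi\left(w\right)$ and $\pi\left(x\right)$.
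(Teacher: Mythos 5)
Your proposal is correct and follows essentially the same route as the paper: compute $\mathbb{P}_x(Y_1=y)$ by decomposing over the sojourn at $w$ (your geometric exit distribution is exactly the paper's geometric series $\sum_{n\ge 1}(c_{ww}/\pi(w))^{n-1}c_{wy}/\pi(w)$), and then verify that $\pi'(x)=\pi(x)$ so the resulting ratios are genuine weighted-random-walk transition probabilities. The extra remarks on symmetry of $c'$ and positivity of $\pi(w)-c_{ww}$ are correct and only make the argument slightly more complete than the paper's.
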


\begin{proof}
\noindent We need to show that the weighted random walk on $\left(V',c'\right)$
satisfies the transition probabilities of the censored chain. Let
$x,y\in V'$. Denote by $\left(X_{n}\right)_{n\in\mathbb{N}}$ and
$\left(Y_{n}\right)_{n\in\mathbb{N}}$ the original and censored chains
respectively. The transition probabilities of the censored network
are:
\begin{align*}
\mathbb{P}_{x}\left(Y_{1}=y\right) & =\mathbb{P}_{x}\left(X_{1}=y\right)+\sum_{n=1}^{\infty}\mathbb{P}_{x}\left(X_{1}=X_{2}=...=X_{n}=w,X_{n+1}=y\right)=\\
 & =\frac{c_{xy}}{\pi\left(x\right)}+\sum_{n=1}^{\infty}\frac{c_{xw}}{\pi\left(x\right)}\cdot\left(\frac{c_{ww}}{\pi\left(w\right)}\right)^{n-1}\cdot\frac{c_{wy}}{\pi\left(w\right)}=\frac{1}{\pi\left(x\right)}\left(c_{xy}+\frac{c_{xw}c_{wy}}{\pi\left(w\right)}\frac{1}{1-\frac{c_{ww}}{\pi\left(w\right)}}\right)=\\
 & =\frac{c'_{xy}}{\pi\left(x\right)}.
\end{align*}
We next claim that the sum of weights $\pi'\left(x\right)$ around
a vertex $x$ in the new network $\left(V',c'\right)$ is the same
as in the old network. Indeed,
\begin{align*}
\pi'\left(x\right)=\sum_{y\in V'}c'_{xy} & =\left(\sum_{y\in V'}c_{xy}\right)+\frac{c_{xw}}{\pi\left(w\right)-c_{ww}}\sum_{y\in V'}c_{yw}=\left(\sum_{y\in V'}c_{xy}\right)+c_{xw}=\pi\left(x\right).
\end{align*}
Putting the equalities above together we get $\mathbb{P}_{x}\left(Y_{1}=y\right)=\frac{c'_{xy}}{\pi'\left(x\right)}$
as needed.
\end{proof}
\begin{rem}
Some network reduction techniques, such as the $Y-\Delta$ transform
and resistors in series, are special cases of Lemma \ref{lem: censoring a single state}.
\end{rem}

\begin{cor}
\label{cor: censoring a finite aount of states}Let $\left(V,c\right)$
be a network and let $V':=V\backslash W$ for some finite proper subset
$W\subseteq V$. Then the censoring to $V'$ of the weighed random
walk on $\left(V,c\right)$ is reversible and is represented by a
network $\left(V',c'\right)$ such that for every $x,y\in V'$ that
are not both in $\partial W=\left\{ v\in V'\mid\exists w\in W,\ c_{vw}>0\right\} $
we have $c'_{xy}=c_{xy}$.
\end{cor}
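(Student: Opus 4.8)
The plan is to obtain $(V',c')$ by censoring the vertices of $W$ one at a time and iterating Lemma \ref{lem: censoring a single state}. Enumerate $W=\{w_1,\dots,w_m\}$ and set $V_0=V$, $V_j=V_{j-1}\setminus\{w_j\}$, so that $V_m=V'$. Starting from $c^{(0)}=c$, let $c^{(j)}$ be the weight function on $V_j$ produced from $c^{(j-1)}$ by the single-state formula of Lemma \ref{lem: censoring a single state}, namely
\[
c^{(j)}_{xy}=c^{(j-1)}_{xy}+\frac{c^{(j-1)}_{xw_j}\,c^{(j-1)}_{yw_j}}{\pi^{(j-1)}(w_j)-c^{(j-1)}_{w_jw_j}}.
\]
Since deleting all appearances of $W$ from a trajectory is the same as deleting the appearances of $w_1$, then those of $w_2$, and so on, censoring is transitive, so $(V_m,c^{(m)})$ represents the censoring of the original walk to $V'$. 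By Lemma \ref{lem: censoring a single state} each stage yields a reversible chain, hence the composition is reversible; moreover that Lemma shows $\pi^{(j)}(x)=\pi^{(j-1)}(x)$ for every $x\in V_j$, so the reversible measure $\pi$ is preserved throughout and the final network $(V',c'):=(V_m,c^{(m)})$ is well-defined independently of the chosen enumeration.

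The heart of the argument is a monotone invariant tracking which vertices acquire new neighbours. I would prove by induction on $j$ the following claim: every $x\in V'\setminus\partial W$ satisfies $c^{(j)}_{xw}=0$ for each $w\in W\setminus\{w_1,\dots,w_j\}$ and $c^{(j)}_{xy}=c_{xy}$ for each $y\in V_j$. The base case $j=0$ is exactly the definition of $\partial W$: since $x\notin\partial W$, we have $c_{xw}=0$ for all $w\in W$. For the inductive step, the displayed formula modifies $c^{(j-1)}_{xy}$ only through the factor $c^{(j-1)}_{xw_j}$; as $w_j\in W\setminus\{w_1,\dots,w_{j-1}\}$, the induction hypothesis gives $c^{(j-1)}_{xw_j}=0$, so the added term vanishes and all weights incident to $x$ are left unchanged. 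In particular $x$ gains no new neighbour, so it stays non-adjacent to every remaining $w\in W\setminus\{w_1,\dots,w_j\}$, closing the induction.

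With the invariant in hand the corollary is immediate: given $x,y\in V'$ not both in $\partial W$, assume without loss of generality that $x\notin\partial W$; applying the invariant at $j=m$ gives $c'_{xy}=c^{(m)}_{xy}=c_{xy}$. The one point requiring care --- and the closest thing to an obstacle --- is precisely the inductive claim: a priori, censoring $w_1$ could create an edge making some $x\notin\partial W$ adjacent to a still-uncensored $w_j$, which a later censoring could then use to alter $x$'s weights. The observation that each single-state censoring only adds conductance between two \emph{existing} neighbours of the censored vertex is what rules this out, and the induction packages it so that vertices outside $\partial W$ are completely insulated from the entire process.
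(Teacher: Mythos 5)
Your proposal is correct and follows essentially the same route as the paper: censor the states of $W$ one at a time via Lemma \ref{lem: censoring a single state} and observe that a vertex $x\notin\partial W$ contributes a vanishing correction term at every stage. Your explicit induction showing that $x$ never becomes adjacent to a not-yet-censored vertex of $W$ is a detail the paper leaves implicit, and it is a worthwhile clarification rather than a divergence in method.
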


\begin{proof}
Write $W=\left\{ w_{1},w_{2},...,w_{n}\right\} $. The censored chain
to $V'$ is exactly the chain obtained by censoring the states in
$W$ one after the other. Hence, by applying Lemma \ref{lem: censoring a single state}
$n$ times, the chain censored to $V'$ is reversible. For the second
part, let $x,y\in V'$ with $x\notin\partial W$. Then for every $i\in\left\{ 1,2,...,n\right\} $
we have $c_{xw_{i}}=0$, and hence the weight $c_{xy}$ is unchanged
in each application of Lemma \ref{lem: censoring a single state}.
\end{proof}
\noindent When censoring a chain to $V\backslash W$, we may be adding
loops at vertices of $\partial W$. The following Lemma shows that
deleting repetitions from the chain corresponds to deleting loops
from the representing network:
\begin{defn}
Let $\left(X_{n}\right)_{n\in\mathbb{N}}$ be an irreducible Markov
chain. Its induced \textbf{repetition-deleted chain} is the random
sequence obtained from $\left(X_{n}\right)_{n\in\mathbb{N}}$ by replacing
consecutive appearances of the same state with a single appearance.
Formally, set $\tau_{0}=0$ and for every $i>0$ set $\tau_{i}=\inf\left\{ n>\tau_{i-1}\mid X_{n}\neq X_{\tau_{i-1}}\right\} $.
Then the repetition-deleted chain induced by $\left(X_{n}\right)_{n\in\mathbb{N}}$
is $\left(X_{\tau_{i}}\right)_{i\in\mathbb{\mathbb{N}}}$.
\end{defn}

\begin{lem}
\label{lem:deleting repetitions}Let $\left(X_{n}\right)_{n\in\mathbb{N}}$
be a Markov chain represented by a network $\left(V,c\right)$. Then
the repetition-deleted chain $\left(Y_{n}\right)_{n\in\mathbb{N}}$
induced by $\left(X_{n}\right)_{n\in\mathbb{N}}$ is reversible. Furthermore,
a network $\left(V,c'\right)$ representing $\left(Y_{n}\right)_{n\in\mathbb{N}}$
is obtained from $\left(V,c\right)$ by deleting self-loops.
\end{lem}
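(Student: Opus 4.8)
The plan is to exhibit directly the network $(V,c')$ obtained from $(V,c)$ by deleting all self-loops, to show that the weighted random walk on it has exactly the transition probabilities of the repetition-deleted chain $(Y_n)$, and then to read off reversibility for free: since any weighted random walk on a connected network is automatically an irreducible reversible Markov chain (as recalled earlier in this section), once we know that $(Y_n)$ is represented by $(V,c')$ both assertions of the lemma follow at once. Two preliminary points need checking before the computation. First, since $(X_n)$ is irreducible on $V$ and, excluding the trivial one-vertex case, can leave every state, we have $P_{xx}<1$ for all $x$, so each $\tau_i$ is almost surely finite and $(Y_n)$ is a genuine infinite sequence. Second, because the $\tau_i$ are stopping times, the strong Markov property guarantees that $(Y_n)=(X_{\tau_n})$ is itself a Markov chain; hence its one-step transition probabilities are all that must be computed.

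The key computation is the one-step kernel of $(Y_n)$. Fix $x,y\in V$ with $y\neq x$. Starting from $x$, the event that $Y_1=y$ is the disjoint union over $k\geq 0$ of the events that the walk takes exactly $k$ self-loop steps at $x$ and then moves to $y$. Writing $P_{xy}=c_{xy}/\pi\left(x\right)$, this decomposition gives a geometric sum,
\[
\mathbb{P}_x\!\left(Y_1=y\right)=\sum_{k=0}^{\infty}P_{xx}^{\,k}\,P_{xy}=\frac{P_{xy}}{1-P_{xx}}=\frac{c_{xy}}{\pi\left(x\right)-c_{xx}},
\]
while $\mathbb{P}_x\left(Y_1=x\right)=0$ by construction.

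Finally I would match this with the self-loop-deleted network. Setting $c'_{xy}=c_{xy}$ for $y\neq x$ and $c'_{xx}=0$, the vertex weight becomes $\pi'\left(x\right)=\sum_{y}c'_{xy}=\pi\left(x\right)-c_{xx}$, so the weighted random walk on $(V,c')$ moves from $x$ to $y\neq x$ with probability $c'_{xy}/\pi'\left(x\right)=c_{xy}/\left(\pi\left(x\right)-c_{xx}\right)$, exactly the kernel computed above. Deleting self-loops cannot disconnect $(V,c')$, since a vertex carrying only a loop would already be isolated in $(V,c)$, contradicting connectedness; thus $(V,c')$ is a bona fide network and its walk is reversible, whence $(Y_n)$ is reversible and represented by $(V,c')$, as claimed. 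I expect the only real subtlety to be the bookkeeping around self-loops in the definition of $\pi$ — one must use the convention $\pi\left(x\right)=\sum_{y\sim x}c_{xy}$, counting the loop once and consistently with $P_{xx}=c_{xx}/\pi\left(x\right)$, throughout — together with the appeal to the strong Markov property needed to promote the one-step kernel to the full Markov and reversibility statements.
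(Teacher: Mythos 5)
Your proposal is correct and follows essentially the same route as the paper: both exhibit the loop-deleted network $(V,c')$, compute $\pi'(x)=\pi(x)-c_{xx}$, and verify the one-step kernel via the identical geometric sum over consecutive self-loop steps. The extra care you take with well-definedness, the strong Markov property, and connectedness after loop deletion is sound but not needed beyond what the paper already implicitly assumes.
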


\begin{proof}
The new weights $c'$ are given by:
\[
c'_{xy}=\begin{cases}
c_{xy}, & x\neq y\\
0, & x=y
\end{cases},
\]
and so the new sum of weights around a vertex $x$ is:
\[
\pi'\left(x\right)=\sum_{y\in V}c'_{xy}=\left(\sum_{y\in V}c{}_{xy}\right)-c_{xx}=\pi\left(x\right)-c_{xx}.
\]
We need to show that the weighted random walk on $\left(V,c'\right)$
satisfies the transition probabilities of the repetition-deleted chain.
Indeed, if $y\neq x$ then:
\begin{align*}
\mathbb{P}_{x}\left(Y_{1}=y\right) & =\sum_{n=0}^{\infty}\mathbb{P}_{x}\left(X_{0}=X_{1}=...=X_{n}=x,X_{n+1}=y\right)=\\
 & =\sum_{n=0}^{\infty}\left(\frac{c_{xx}}{\pi\left(x\right)}\right)^{n}\cdot\frac{c_{xy}}{\pi\left(x\right)}=\frac{c_{xy}}{\pi\left(x\right)}\cdot\frac{1}{1-\frac{c_{xx}}{\pi\left(x\right)}}=\frac{c_{xy}}{\pi\left(x\right)-c_{xx}}=\frac{c'_{xy}}{\pi'\left(x\right)}.
\end{align*}
Otherwise, $y=x$ and then by the definition of $\left(Y_{n}\right)_{n\in\mathbb{N}}$
we have $\mathbb{P}_{x}\left(Y_{1}=x\right)=0=\frac{c'_{xx}}{\pi'\left(x\right)}$.
\end{proof}
The following Lemma shows that when removing the set of circles contained
in a triangle of an infinite circle-packed triangulation, if this
set is finite then the original weighted random walk and the weighted
random walk can be coupled:
\begin{lem}
\label{lem:effective-conductances}Let $\left\{ C_{v}\right\} _{v\in V}$
be a circle-packed infinite triangulation. Let $xyz$ be a triangle
in the graph such that the set $W\subseteq V$ of the vertices contained
in the interior of $xyz$ is finite, as in Figure \ref{fig: Triangle xyz and neighbours of x}.
Let $\left(X_{n}\right)_{n\in\mathbb{N}}$ be a Dubejko-weighted random
walk on $\left\{ C_{v}\right\} _{v\in V}$ and let $\left(Y_{n}\right)_{n\in\mathbb{N}}$
be the chain obtained from $\left(X_{n}\right)_{n\in\mathbb{N}}$
by first censoring it to $V'=V\backslash W$ and then deleting repetitions.
Then $\left(Y_{n}\right)_{n\in\mathbb{N}}$ is a Dubejko-weighted
random walk on $\left\{ C_{v}\right\} _{v\in V'}$.
\end{lem}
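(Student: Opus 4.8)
The plan is to recognize that deleting the interior circles turns the triangle $xyz$ into a genuine face, to name the two candidate networks on the resulting graph, and to prove they coincide. First I would check that $\left\{ C_{v}\right\} _{v\in V'}$ is again a circle-packed infinite triangulation of the same carrier: the circles indexed by $V'$ still have disjoint interiors and realize the same straight-line drawing away from the triangle, while inside it the three mutually tangent circles $C_{x},C_{y},C_{z}$ now bound a single triangular face $xyz$. Write $G'=\left(V',E'\right)$ for its tangency graph and let $c^{new}$ be the Dubejko weights it induces; since $E'=E\cap\binom{V'}{2}$, both descriptions live on the same graph. By Corollary \ref{cor: censoring a finite aount of states}, the censored chain is reversible and represented by a network $\left(V',c'\right)$ with $c'_{uv}=c_{uv}$ whenever $u,v$ are not both in $\partial W$; as the only vertices of $V'$ adjacent to the interior set $W$ are the three corners, we have $\partial W=\left\{ x,y,z\right\}$ (and $\partial W=\emptyset$, whence trivially $Y=X$, if $W=\emptyset$). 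Deleting the self-loops created at the corners via Lemma \ref{lem:deleting repetitions} yields the representing network $\left(V',c''\right)$ of $\left(Y_{n}\right)_{n\in\mathbb{N}}$, with $c''_{uv}=c_{uv}=c^{new}_{uv}$ for every edge not among $xy,yz,zx$, since such edges and the faces flanking them are untouched by the surgery. Everything thus reduces to matching $c''$ and $c^{new}$ on the three triangle edges.

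Second I would show that the positions of $\left(Y_{n}\right)_{n\in\mathbb{N}}$ still form a martingale. Each excursion of $\left(X_{n}\right)_{n\in\mathbb{N}}$ into $W$ begins and ends at a corner, stays inside the bounded triangle $xyz$, and terminates in almost surely finite time because $W$ is finite; optional stopping applied to $\left(X_{n}\right)_{n\in\mathbb{N}}$ stopped at the end of such an excursion shows that the censored chain, observed at its successive visit times to $V'$, is again a martingale. The repetition-deletion step then preserves this property trivially, as self-loops contribute nothing to the vector balance $\sum_{u}c_{vu}\left(u-v\right)=0$. Consequently the weighted random walk on $\left(V',c''\right)$ is a martingale. (Concretely, optional stopping identifies the interior exit probabilities of $w\in W$ with its barycentric coordinates in $xyz$, but the finish below will not need this explicit form.)

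Third comes the short linear-algebra finish. By Theorem \ref{Theorem: Dubejko Weights are a Martingale} applied to $\left\{ C_{v}\right\} _{v\in V'}$, the walk on $\left(V',c^{new}\right)$ is a martingale as well, so at the corner $x$ both balance conditions hold:
\[
\sum_{u\sim x}c''_{xu}\left(u-x\right)=0=\sum_{u\sim x}c^{new}_{xu}\left(u-x\right).
\]
The neighbours of $x$ in $G'$ are $y$, $z$, and its exterior neighbours, and $c''$ and $c^{new}$ already agree on every exterior edge; subtracting the two identities therefore leaves
\[
\left(c''_{xy}-c^{new}_{xy}\right)\left(y-x\right)+\left(c''_{xz}-c^{new}_{xz}\right)\left(z-x\right)=0.
\]
Since $C_{x},C_{y},C_{z}$ are mutually tangent, $x,y,z$ are not collinear, so $y-x$ and $z-x$ are linearly independent and both coefficients vanish. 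Running the same argument at $y$ and $z$ forces $c''=c^{new}$ on all of $xy,yz,zx$. Hence $c''=c^{new}$ on every edge, the total weights agree at every vertex, so the two transition matrices coincide and $\left(Y_{n}\right)_{n\in\mathbb{N}}$ is the Dubejko-weighted random walk on $\left\{ C_{v}\right\} _{v\in V'}$.

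The main obstacle I expect is the second step: justifying the optional-stopping argument across the excursions into $W$ so that $\left(Y_{n}\right)_{n\in\mathbb{N}}$ is genuinely a martingale, together with the geometrically evident but care-requiring claim that $\left\{ C_{v}\right\} _{v\in V'}$ really is a circle-packed infinite triangulation with $xyz$ as a face. Once those are in hand, the linear-independence argument closes the proof with no computation; in particular I would avoid any direct evaluation of effective conductances through the interior, which is what makes the martingale route attractive.
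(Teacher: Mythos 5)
Your proposal is correct and follows essentially the same route as the paper: reduce to the three triangle edges via Corollary \ref{cor: censoring a finite aount of states} and Lemma \ref{lem:deleting repetitions}, obtain the vector balance condition at each corner for the censored-and-repetition-deleted chain by optional stopping of the bounded stopped martingale, compare with the balance condition from Theorem \ref{Theorem: Dubejko Weights are a Martingale} applied to $\left\{ C_{v}\right\} _{v\in V'}$, and conclude by linear independence of $y-x$ and $z-x$. The only cosmetic difference is that you phrase the key step as "the censored chain is still a martingale" while the paper works directly with the single exit time $\tau=\min\left\{ \tau_{y},\tau_{v_{1}},\dots,\tau_{v_{m}},\tau_{z}\right\} $ started from $x$, which is the same computation.
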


\noindent
\begin{figure}[H]
\includegraphics[scale=0.65]{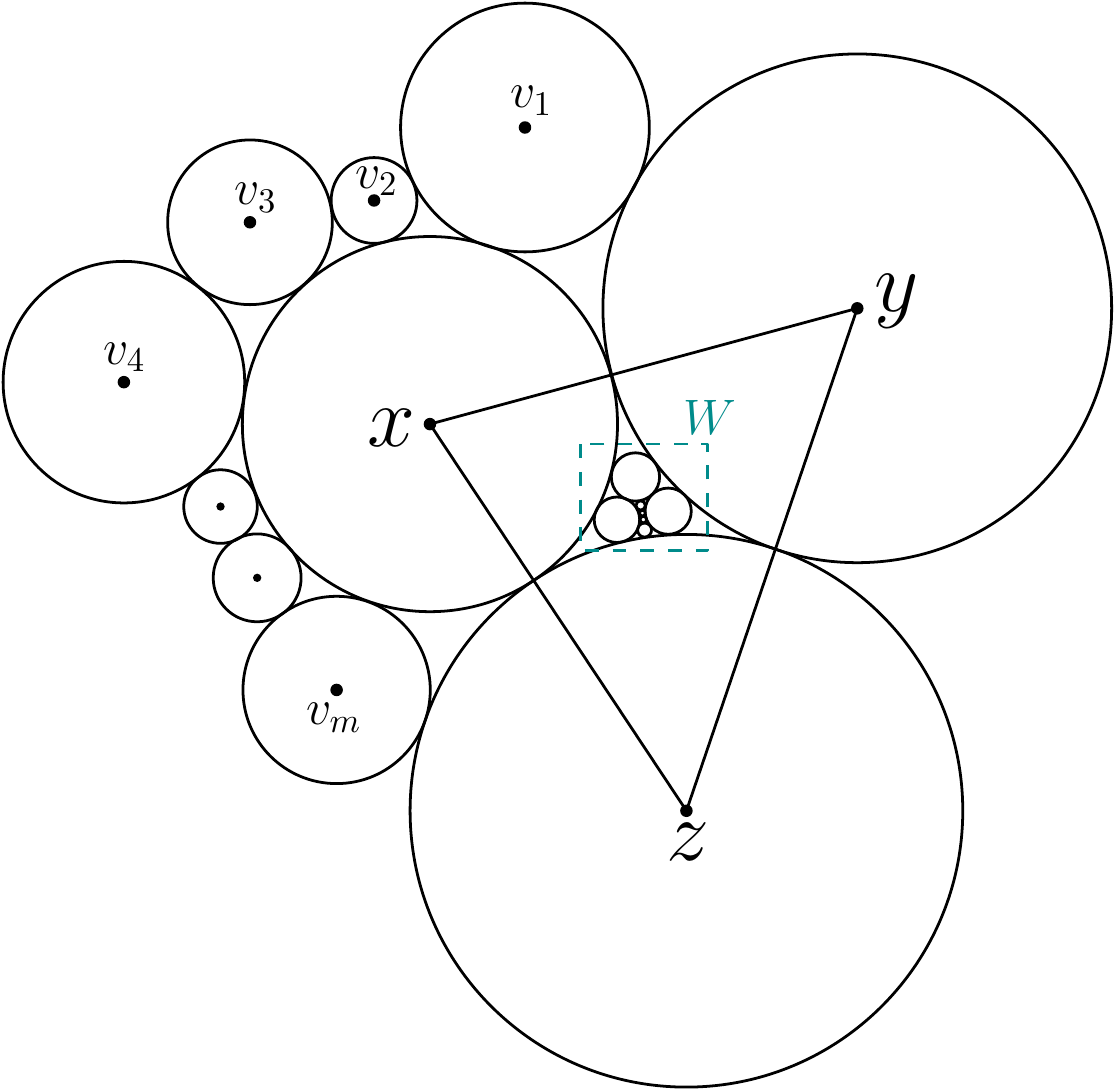}\caption{\label{fig: Triangle xyz and neighbours of x}Triangle $xyz$, the
set $W$ of vertices contained in the interior of $xyz$ and the neighbours
of $x$.}
\end{figure}
\begin{proof}
Let $G=\left(V,E\right)$ and $G'=\left(V',E'\right)$ be the respective
tangency graphs of $\left\{ C_{v}\right\} _{v\in V}$ and $\left\{ C_{v}\right\} _{v\in V'}$,
and let $c$ and $c'$ be their respective Dubejko weights. We first
notice that if $u_{1},u_{2}\in V'$ are not both in $\left\{ x,y,z\right\} $
then $c'_{u_{1},u_{2}}=c_{u_{1},u_{2}}$: Indeed, if $u_{1}u_{2}\notin E$
then $u_{1}u_{2}\notin E'$ and both sides of the equation are zero,
and otherwise equality holds as well since both the edge $u_{1}u_{2}$
and its dual have not changed in length when removing the circles
corresponding to vertices of $W$.\\
Next, note that $\partial W\subseteq\left\{ x,y,z\right\} $ (in fact
equality holds if $W$ is nonempty). Thus, by Corollary \ref{cor: censoring a finite aount of states}
and Lemma \ref{lem:deleting repetitions}, $\left(Y_{n}\right)_{n\in\mathbb{N}}$
is a reversible Markov chain represented by a network $\left(V',c''\right)$
with no self-loops such that for all $u_{1},u_{2}\in V'$ that are
not both in $\left\{ x,y,z\right\} $ we have $c''_{u_{1},u_{2}}=c_{u_{1},u_{2}}$.\\
We need to show that $c''\equiv c'$. For $u_{1},u_{2}\in V'$ that
are not both in $\left\{ x,y,z\right\} $ we've already seen that
$c'_{u_{1},u_{2}}=c_{u_{1},u_{2}}=c''_{u_{1},u_{2}}$. So we are left
with showing that $c''$ and $c'$ agree on edges of the triangle
$xyz$. Denote the neighbours of $x$ that are not in $W$ by $y,v_{1},v_{2},...,v_{m},z$,
as in Figure \ref{fig: Triangle xyz and neighbours of x}, and suppose
$X_{0}\equiv x$. For any vertex $u\in V$, let $\tau_{u}:=\min\left\{ n\geq0:X_{n}=u\right\} $.
Set a stopping time
\[
\tau=\min\left\{ \tau_{y},\tau_{v_{1}},\tau_{v_{2}},...,\tau_{v_{m}},\tau_{z}\right\} .
\]
$\tau$ is almost surely finite since $\left\{ \tau=\infty\right\} $
is the event that $\left(X_{n}\right)_{n\in\mathbb{N}}$ stays in
the finite set of vertices $W$ forever. For any $u\in\left\{ y,v_{1},v_{2},...,v_{m},z\right\} $,
the event $\left\{ \tau=\tau_{u}\right\} $ is exactly the event $\left\{ Y_{1}=u\right\} $.
By Theorem \ref{Theorem: Dubejko Weights are a Martingale}, the sequence
$\left(X_{n}\right)_{n\in\mathbb{N}}$ is a martingale. Stopping this
martingale at stopping time $\tau$ yields a bounded martingale, in
which case Doob's optional stopping theorem implies:
\[
x=\mathbb{P}_{x}\left(Y_{1}=y\right)\cdot y+\mathbb{P}_{x}\left(Y_{1}=z\right)\cdot z+\sum_{j=1}^{m}\mathbb{P}_{x}\left(Y_{1}=v_{j}\right)\cdot v_{j},
\]
or, passing $x$ to the other side:
\[
\mathbb{P}_{x}\left(Y_{1}=y\right)\cdot\left(y-x\right)+\mathbb{P}_{x}\left(Y_{1}=z\right)\cdot\left(z-x\right)+\sum_{j=1}^{m}\mathbb{P}_{x}\left(Y_{1}=v_{j}\right)\cdot\left(v_{j}-x\right)=0.
\]
Now, substituting $\mathbb{P}_{x}\left(Y_{1}=u\right)=\frac{c''_{xu}}{\pi''\left(x\right)}$
and multiplying the equation by $\pi''\left(x\right)$ we get:

\[
c''_{xy}\left(y-x\right)+c''_{xz}\left(z-x\right)+\sum_{j=1}^{m}c''_{xv_{j}}\left(v_{j}-x\right)=0.
\]
For every $j\in\left\{ 1,2,...,m\right\} $, since $v_{j}\notin\partial W$,
we have an equality $c''_{xv_{j}}=c_{xv_{j}}$, and thus:
\begin{equation}
c''_{xy}\left(y-x\right)+c''_{xz}\left(z-x\right)=-\sum_{j=1}^{m}c_{xv_{j}}\left(v_{j}-x\right).\label{eq: two tag conductances-1}
\end{equation}
Start a Dubejko-weighted random walk $\left(W_{n}\right)_{n\in\mathbb{N}}$
on $\left\{ C_{v}\right\} _{v\in V'}$ at $W_{0}\equiv x$. By Theorem
\ref{Theorem: Dubejko Weights are a Martingale} again, we have that
$\left(W_{n}\right)_{n\in\mathbb{N}}$ is a martingale, and in particular
$\mathbb{E}\left[W_{0}\right]=\mathbb{E}\left[W_{1}\right]$. A similar
calculation to the one we just did, substituting $\mathbb{P}_{x}\left(W_{1}=u\right)=\frac{c'_{xu}}{\pi'\left(x\right)},$
multiplying by $\pi'\left(x\right)$ and using $c'_{xv_{j}}=c_{xv_{j}}$,
gives us:
\begin{equation}
c'_{xy}\left(y-x\right)+c'_{xz}\left(z-x\right)=-\sum_{j=1}^{m}c_{xv_{j}}\left(v_{j}-x\right).\label{eq: tilde conductances-1}
\end{equation}
Combining equations \ref{eq: two tag conductances-1} and \ref{eq: tilde conductances-1}
yields:
\[
c''_{xy}\left(y-x\right)+c''_{xz}\left(z-x\right)=c'_{xy}\left(y-x\right)+c'_{xz}\left(z-x\right).
\]
$y-x$ and $z-x$ are two sides of a triangle and hence independent
vectors in $\mathbb{R}^{2}$. Thus, $c''_{xy}=c'_{xy}$ and $c''_{xz}=c'_{xz}$
as needed. A similar calculation with $y$ as center circle instead
of $x$ shows that $c''_{yz}=c'_{yz}$.
\end{proof}
The following Corollary is an extension of Lemma \ref{lem:effective-conductances}
which allows us, under certain conditions, to remove an infinite amount
of vertices from the packing:
\begin{cor}
\label{cor: coupling}Let $\left\{ C_{v}\right\} _{v\in V}$ be a
circle-packed infinite triangulation with tangency graph $G$. Let
$\left\{ x_{i}y_{i}z_{i}\right\} _{i\in I}$ for some $I=\left\{ 1,2,..,N\right\} $
or $I=\mathbb{N}$ be a family of triangles in $G$ with pairwise
disjoint interiors such that for each $i\in I$, the set of vertices
$W_{i}$ contained in the interior of the triangle $x_{i}y_{i}z_{i}$
is finite. Set $W=\bigcup_{i\in I}W_{i}$ and $V'=V\backslash W$.
For some $\rho\in V'$, start a Dubejko-weighted random walk $\left(X_{n}\right)_{n\in\mathbb{N}}$
on $\left\{ C_{v}\right\} _{v\in V}$ at $\rho$. Then the chain $\left(Y_{n}\right)_{n\in\mathbb{N}}$
obtained from $\left(X_{n}\right)_{n\in\mathbb{N}}$ by first censoring
it to $V'$ and then deleting repetitions is a Dubejko-weighted random
walk on $\left\{ C_{v}\right\} _{v\in V'}$ started at $\rho$. In
particular, this coupling implies that the weighted random walks on
$\left\{ C_{v}\right\} _{v\in V}$ and $\left\{ C_{v}\right\} _{v\in V'}$
are either both recurrent or both transient.
\end{cor}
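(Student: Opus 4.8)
The plan is to reduce the general statement to the single-triangle result of Lemma \ref{lem:effective-conductances} in two stages: first I would handle a finite family of triangles by induction on its size, and then bootstrap to the (possibly) infinite family by a localization argument that exploits the local finiteness of $G$.

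I would first treat the case $I=\{1,\dots,N\}$ by induction on $N$. The base case $N=1$ is exactly Lemma \ref{lem:effective-conductances}. For the inductive step, note that since the interiors of the triangles are pairwise disjoint, the sets $W_i$ are pairwise disjoint, and no corner or interior vertex of $x_N y_N z_N$ lies in $W_1\cup\dots\cup W_{N-1}$; hence after deleting $W_1\cup\dots\cup W_{N-1}$ the triangle $x_N y_N z_N$ persists as a triangle whose interior vertex set is still $W_N$, and $\{C_v\}_{v\in V\setminus(W_1\cup\dots\cup W_{N-1})}$ is again a circle-packed infinite triangulation. Applying the inductive hypothesis and then Lemma \ref{lem:effective-conductances} to this reduced packing, the only point to verify is that performing ``censor then delete repetitions'' twice in succession agrees with performing it once to the final vertex set. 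This is a purely combinatorial fact about sequences: writing $\Phi_S$ for the operation of keeping the visits to $S$ and then collapsing consecutive repeats, one checks directly that $\Phi_{S_2}\circ\Phi_{S_1}=\Phi_{S_2}$ whenever $S_2\subseteq S_1$, since the final output is simply the collapsed sequence of visits to $S_2$, regardless of the intermediate processing. This closes the finite case.

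For the infinite case I would first argue that $(Y_n)_{n\in\mathbb{N}}$ is almost surely an infinite sequence and a Markov chain. Because every vertex of $W_i$ has all its neighbours in $W_i\cup\{x_i,y_i,z_i\}$ with $\{x_i,y_i,z_i\}\subseteq V'$, any step leaving $W_i$ lands in $V'$; since each $W_i$ is finite the walk a.s.\ cannot remain in a single $W_i$ forever, and it cannot pass between distinct $W_i$'s without first visiting $V'$. Hence $(X_n)_{n\in\mathbb{N}}$ visits $V'$ infinitely often almost surely, the censoring times are a.s.\ finite, and by the strong Markov property the censored chain is Markov; deleting repetitions preserves this, as in Lemma \ref{lem:deleting repetitions}. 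It then remains only to identify the one-step transition probabilities.

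The heart of the proof, and the step I expect to be the main obstacle, is a localization argument reducing these transitions to the finite case. Fix $x\in V'$ and let $T(x)=\{i\in I: x\in\{x_i,y_i,z_i\}\}$; since $G$ is locally finite, $x$ lies on only finitely many triangles, so $T(x)$ is finite. Any neighbour of $x$ lying in $W$ must be interior to a triangle having $x$ as a corner, so all such neighbours lie in $\bigcup_{i\in T(x)}W_i$. Running $(X_n)$ from $x$ until the first time $\sigma$ it reaches $V'\setminus\{x\}$, an easy induction on steps shows the walk stays inside $\{x\}\cup\bigcup_{i\in T(x)}W_i$ and that $X_\sigma\in V'$; thus $Y_1=X_\sigma$ depends only on the finitely many triangles indexed by $T(x)$. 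Consequently $\mathbb{P}_x(Y_1=y)$ is unchanged if we delete only $\{W_i\}_{i\in T(x)}$, and by the finite case this equals the Dubejko transition probability at $x$ for $\{C_v\}_{v\in V\setminus\bigcup_{i\in T(x)}W_i}$. Finally, since deleting the remaining $W_j$ with $j\notin T(x)$ touches no vertex adjacent to $x$, every edge and face incident to $x$ is identical in this finite reduction and in $\{C_v\}_{v\in V'}$, so the Dubejko weights around $x$ coincide and $\mathbb{P}_x(Y_1=y)=c'_{xy}/\pi'(x)$. As $Y_0=\rho$ and a Markov chain is determined by its initial state together with its transition probabilities, $(Y_n)_{n\in\mathbb{N}}$ is the Dubejko-weighted random walk on $\{C_v\}_{v\in V'}$ started at $\rho$. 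The recurrence dichotomy follows at once, since $(X_n)$ visits $\rho\in V'$ infinitely often if and only if $(Y_n)$ does.
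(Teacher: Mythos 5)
Your proposal is correct and follows the same overall strategy as the paper's proof: iterate the single-triangle Lemma \ref{lem:effective-conductances}, use local finiteness of $G$ to reduce to finitely many triangles, establish separately that $\left(Y_{n}\right)_{n\in\mathbb{N}}$ is almost surely infinite, and deduce the recurrence dichotomy from the fact that all appearances of $\rho$ survive censoring and (almost surely) collapsing. The one genuine difference is where the localization happens. The paper fixes $k$, observes that the corners $x_{j},y_{j},z_{j}$ are mutually adjacent so that consecutive $Y$-states are $G$-neighbours, confines the first $k$ steps of $\left(Y_{n}\right)$ to a graph ball $B_{k}$, and matches the $k$-step distributions with those of a finite reduction directly. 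You instead localize at the level of a single transition out of a vertex $x$, using the planarity fact that a neighbour of $x$ lying in some $W_{i}$ forces $x\in\left\{ x_{i},y_{i},z_{i}\right\} $, so that only the finitely many triangles incident to $x$ are relevant. Your route gives a cleaner per-vertex statement but needs one extra ingredient the paper's argument sidesteps: identifying one-step transition probabilities pins down the law of $\left(Y_{n}\right)$ only once one knows it is a time-homogeneous Markov chain, and you do supply this (censoring and repetition deletion both preserve Markovianity, the latter as in Lemma \ref{lem:deleting repetitions}). Two further points you make explicit are correct and are used only implicitly in the paper: the composition identity $\Phi_{S_{2}}\circ\Phi_{S_{1}}=\Phi_{S_{2}}$ for $S_{2}\subseteq S_{1}$, and the fact that removing the circles of $W_{1}\cup\dots\cup W_{N-1}$ leaves a circle-packed infinite triangulation in which $x_{N}y_{N}z_{N}$ still has interior vertex set $W_{N}$.
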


\begin{proof}
We start by showing that the process $\left(Y_{n}\right)_{n\in\mathbb{N}}$
is well-defined in the sense that it is almost surely an infinite
sequence. \\
We first claim that for each $i\in I$ we have $x_{i},y_{i},z_{i}\in V'$:
Suppose otherwise, then WLOG $x_{i}\in W_{j}$ for some $j$. Since
$x_{i}\notin W_{i}$, we have $j\neq i$, and so $x_{i}\in int\left(conv\left\{ x_{j},y_{j},z_{j}\right\} \right)$.
This implies that some small ball around $x_{i}$ is contained in
$int\left\{ conv\left\{ x_{j},y_{j},z_{j}\right\} \right\} $. But
this ball also intersects the interior of $conv\left\{ x_{i},y_{i},z_{i}\right\} $,
in contradiction to the assumption of disjoint interiors for the triangles.
Thus, in the graph induced on $W$, each connected component is contained
in some $W_{i}$, and in particular each connected component is finite.
\\
We next claim that for every $m\in\mathbb{N}$, the event $A_{m}=\left\{ \exists t>m\text{ such that}\ X_{t}\in V'\text{ and }X_{t}\neq X_{m}\right\} $
has probability $1$. Since its complement is contained in the countable
union $\bigcup_{v\in V}B_{m,v}$ for $B_{m,v}=\left\{ X_{m}=v\wedge\left(\left(\forall t>m\right)X_{t}\in W\cup\left\{ v\right\} \right)\right\} $,
it is enough to show that $\mathbb{P}_{\rho}\left(B_{m,v}\right)=0$.
The connected component of $v$ in $W\cup\left\{ v\right\} $ is finite
because $v$ has finite degree and so its addition to $W$ can only
join together finitely many components of $W$, which are all finite.
Thus, the event $B_{m,v}$ implies that after time $m$, the random
walker never leaves the finite component of $v$ in $W\cup\left\{ v\right\} $,
so indeed $B_{m,v}$ has probability zero.\\
Finally, the event $A=\bigcup_{m\in\mathbb{N}}A_{m}$ has probability
$1$ since it is the countable union of such events. But $A$ implies
that the result $\left(Y_{n}\right)_{n\in\mathbb{N}}$ of censoring
$W$ and deleting repetitions is an infinite sequence since $A$ guarantees
that infinitely often the random walker has moved between different
states of $V'$. Thus, $\left(Y_{n}\right)_{n\in\mathbb{N}}$ is an
infinite sequence with probability $1$. \\
We move to show that $\left(Y_{n}\right)_{n\in\mathbb{N}}$ is a Dubejko-weighted
random walk on $\left\{ C_{v}\right\} _{v\in V'}$: Set $\left(Y_{n}^{0}\right)_{n\in\mathbb{N}}$
to be $\left(X_{n}\right)_{n\in\mathbb{N}}$, and for each $i\in I$
let $\left(Y_{n}^{i}\right)_{n\in\mathbb{N}}$ be obtained from $\left(Y_{n}^{i-1}\right)_{n\in\mathbb{N}}$
by censoring the vertices of $W_{i}$ and then deleting repetitions.
Fix some $k\in\mathbb{N}$. We claim that there exists some deterministic
$t_{k}\in\mathbb{N}$ such that $\left(Y_{0},Y_{1},...,Y_{k}\right)=\left(Y_{0}^{t_{k}},Y_{1}^{t_{k}},...,Y_{k}^{t_{k}}\right)$.
A key observation here is that for every $j$, the vertices in $\partial W_{j}=\left\{ x_{j},y_{j},z_{j}\right\} $
are neighbours to each other, and so $Y_{i+1}$ has to be a neighbour
of $Y_{i}$ in $G$ for every $i$. Hence, the first $k$ steps $Y_{0},Y_{1},...,Y_{k}$
are contained in a ball $B_{k}$ of graph-radius $k$ around $\rho$
in the graph induced on $V'$. Let $I_{k}=\left\{ i\in I\mid\left\{ x_{i},y_{i},z_{i}\right\} \cap B_{k}\neq\emptyset\right\} $.
Locally finiteness of $G$ implies that $I_{k}\subseteq\left\{ 1,2,...,t_{k}\right\} $
for some $t_{k}\in\mathbb{N}$, and so $\left(Y_{0},Y_{1},...,Y_{k}\right)=\left(Y_{0}^{t_{k}},Y_{1}^{t_{k}},...,Y_{k}^{t_{k}}\right)$.
Now, By Lemma \ref{lem:effective-conductances} applied $t_{k}$ times,
$\left(Y_{n}^{t_{k}}\right)_{n\in\mathbb{N}}$ is a weighted random
walk on $\left\{ C_{v}\right\} _{v\in V\backslash\bigcup_{i=1}^{t_{k}}W_{i}}$.
But the first $k$ steps in the weighted random walk on $\left\{ C_{v}\right\} _{v\in V\backslash\bigcup_{i=1}^{t_{k}}W_{i}}$
are equal in distribution to the first $k$ steps of the weighted
random walk on $\left\{ C_{v}\right\} _{v\in V'}$ since the balls
of graph-radius $k$ around $\rho$ in both networks are identical.
Thus, $\left(Y_{0},Y_{1},...,Y_{k}\right)$ also has the distribution
of the first $k$ steps of the weighted random walk on $\left\{ C_{v}\right\} _{v\in V'}$,
and since $k$ is arbitrary we get that $\left(Y_{n}\right)_{n\in\mathbb{N}}$
is a weighted random walk on $\left\{ C_{v}\right\} _{v\in V'}$ started
at $\rho$.\\
Finally, we show that the random walks are either both recurrent or
both transient. Assume first that $\left\{ C_{v}\right\} _{v\in V}$
is recurrent. Then $\rho$ is visited infinitely often in $\left(X_{n}\right)_{n\in\mathbb{N}}$
with probability $1$. The event that $\rho$ appears infinitely many
times in $\left(X_{n}\right)_{n\in\mathbb{N}}$ but only finitely
many times in $\left(Y_{n}\right)_{n\in\mathbb{N}}$ is contained
in the event $\bigcup_{m=1}^{\infty}B_{m,\rho}$ defined above, so
it has probability zero. Thus, $\rho$ almost surely appears infinitely
often in $\left(Y_{n}\right)_{n\in\mathbb{N}}$ and so $\left\{ C_{v}\right\} _{v\in V'}$
is recurrent. Now assume that $\left\{ C_{v}\right\} _{v\in V}$ is
transient. Then with positive probability $\rho$ appears finitely
often in $\left(X_{n}\right)_{n\in\mathbb{N}}$, and hence finitely
often in $\left(Y_{n}\right)_{n\in\mathbb{N}}$, implying that $\left\{ C_{v}\right\} _{v\in V'}$
is transient.
\end{proof}
\noindent The following Proposition is not used in the proof of Theorem
\ref{thm: main theorem-1}, but we've included it here because of
the proof's resemblance to that of Lemma \ref{lem:effective-conductances}:
\begin{prop}
Let $\left\{ C_{v}\right\} _{v\in V}$ be a circle-packed infinite
triangulation and $G=\left(V,E\right)$ its tangency graph. Let $xyz$
be a triangle in the graph such that the set $W$ of vertices contained
in the interior of the triangle $xyz$ is finite. Let $f:W\cup\left\{ x,y,z\right\} \rightarrow\mathbb{R}$
be harmonic in $W$ with respect to the Dubejko weights of the circle
packing. Then $f$ agrees with the unique affine function $g:\mathbb{R}^{2}\rightarrow\mathbb{R}$
taking the values $f\left(x\right),f\left(y\right),f\left(z\right)$
on $x,y,z$ respectively.
\end{prop}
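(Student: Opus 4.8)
The plan is to identify the harmonic extension $f$ with the barycentric interpolation of its boundary values, mirroring the martingale argument of Lemma \ref{lem:effective-conductances}. Throughout I identify each vertex with the center of its circle in $\mathbb{R}^2$, and I write $g$ for the unique affine function determined by $g(x)=f(x)$, $g(y)=f(y)$, $g(z)=f(z)$, which is well-defined since $x,y,z$ are affinely independent.

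First I would record a planarity observation: every neighbour of a vertex $w\in W$ lies in $W\cup\{x,y,z\}$. Indeed, the three edges $xy,yz,zx$ bound the triangle as a Jordan curve in the straight-line drawing and the center of $C_w$ lies in its interior; an edge from $w$ to a vertex strictly outside the closed triangle would be a straight segment crossing one of $xy,yz,zx$, contradicting planarity of the drawing. Consequently a weighted random walk started in $W$ stays in $W\cup\{x,y,z\}$ until it first reaches $\{x,y,z\}$, so $f$ is defined along its entire trajectory up to that time.

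Next I would fix $w\in W$, run a Dubejko-weighted random walk $(X_n)_{n\in\mathbb{N}}$ with $X_0\equiv w$, and set $\tau=\min\{n:X_n\in\{x,y,z\}\}$. Since $W$ is finite, $\tau<\infty$ almost surely, and the stopped process $(X_{n\wedge\tau})_{n\in\mathbb{N}}$ takes values in the finite set $W\cup\{x,y,z\}$, hence is bounded. By Theorem \ref{Theorem: Dubejko Weights are a Martingale} it is a martingale, so optional stopping gives $w=\mathbb{E}_w[X_\tau]=p_x\,x+p_y\,y+p_z\,z$, where $p_u:=\mathbb{P}_w(X_\tau=u)$ and $p_x+p_y+p_z=1$; thus $(p_x,p_y,p_z)$ are precisely the barycentric coordinates of $w$ relative to $xyz$. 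At the same time, since $f$ is harmonic on $W$, the bounded process $(f(X_{n\wedge\tau}))_{n\in\mathbb{N}}$ is also a martingale, and optional stopping yields $f(w)=\mathbb{E}_w[f(X_\tau)]=p_x f(x)+p_y f(y)+p_z f(z)$.

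Finally, because $g$ is affine it respects convex combinations, so
\[
g(w)=g(p_x x+p_y y+p_z z)=p_x g(x)+p_y g(y)+p_z g(z)=p_x f(x)+p_y f(y)+p_z f(z)=f(w).
\]
Hence $f=g$ on $W$, and $f=g$ on $\{x,y,z\}$ by the definition of $g$, which completes the proof. The only genuine subtlety is the boundary observation of the second paragraph, which guarantees that the stopped walk never visits a vertex where $f$ is undefined; once that is in hand, the two applications of optional stopping — one to the position martingale, one to $f$ — do all the work. As an alternative one could set $h=f-g$, note that $g$ is harmonic on $W$ because affine functions are harmonic for the Dubejko weights (again by Theorem \ref{Theorem: Dubejko Weights are a Martingale}), and invoke the maximum principle together with $h\mid_{\{x,y,z\}}\equiv 0$ to conclude $h\equiv 0$.
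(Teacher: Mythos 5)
Your proof is correct and follows essentially the same route as the paper: two applications of optional stopping to the bounded martingales $\left(X_{n\wedge\tau}\right)$ and $\left(f\left(X_{n\wedge\tau}\right)\right)$, identifying the hitting probabilities of $x,y,z$ with the barycentric coordinates of $w$. Your explicit planarity observation (that the stopped walk never leaves $W\cup\left\{ x,y,z\right\} $) is left implicit in the paper but is a worthwhile addition, and the maximum-principle alternative you sketch is also valid.
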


\begin{proof}
Because $g$ is affine, its value on convex combinations of the form
$ax+by+cz$ for $a,b,c\geq0$ with $a+b+c=1$ is given by:
\[
g\left(ax+by+cz\right)=af\left(x\right)+bf\left(y\right)+cf\left(z\right).
\]
Let $w\in W\cup\left\{ x,y,z\right\} $. Start a weighted random walk
$\left(Z_{n}\right)_{n\in\mathbb{N}}$ at $Z_{0}\equiv w$, and stop
it at $\tau=\min\left\{ \tau_{x},\tau_{y},\tau_{z}\right\} $. By
Theorem \ref{Theorem: Dubejko Weights are a Martingale}, $\left(Z_{n\wedge\tau}\right)_{n\in\mathbb{N}}$
is a bounded martingale. By the optional stopping theorem we deduce
that $\mathbb{E}Z_{0}=\mathbb{E}Z_{\tau}$, so:
\[
w=\mathbb{P}_{w}\left(\tau=\tau_{x}\right)x+\mathbb{P}_{w}\left(\tau=\tau_{y}\right)y+\mathbb{P}_{w}\left(\tau=\tau_{z}\right)z.
\]
This presents $w$ (uniquely) as a convex combination of $\left\{ x,y,z\right\} $.
Since $f$ is harmonic, the sequence $\left(f\left(Z_{n\wedge\tau}\right)\right)_{n\in\mathbb{N}}$
is also a martingale. We apply the optional stopping theorem again
on this martingale, which is also bounded since $W$ is finite, to
get:
\begin{align*}
f\left(w\right) & =\mathbb{P}_{w}\left(\tau=\tau_{x}\right)f\left(x\right)+\mathbb{P}_{w}\left(\tau=\tau_{y}\right)f\left(y\right)+\mathbb{P}_{w}\left(\tau=\tau_{z}\right)f\left(z\right)=\\
 & =g\left(\mathbb{P}_{w}\left(\tau=\tau_{x}\right)x+\mathbb{P}_{w}\left(\tau=\tau_{y}\right)y+\mathbb{P}_{w}\left(\tau=\tau_{z}\right)z\right)=g\left(w\right).
\end{align*}
\end{proof}

\subsection{Parabolicity}
\begin{defn}
A domain $\Omega\subseteq\mathbb{R}^{2}$ is said to be \textbf{parabolic}
if for every open set $U\subseteq\Omega$ and for any $x\in\Omega$,
Brownian motion started at $x$ and killed at $\partial\Omega$ hits
$U$ almost surely.
\end{defn}

\begin{defn}
Let $\Omega\subseteq\mathbb{R}^{2}$ be a domain and $K\subseteq\Omega$
a compact set. The \textbf{capacity} of $K$ with respect to $\Omega$
is defined as:
\[
Cap\left(K,\Omega\right)=\inf\iintop_{\Omega}\left\Vert \nabla\phi\right\Vert ^{2}dA,
\]
where the infimum is taken over all Lipschitz functions $\phi:\Omega\rightarrow\mathbb{R}$
with compact support in $\Omega$ such that $\phi\mid_{K}\equiv1$
and $0\leq\phi\leq1$. For a precompact open set $K$ in $\Omega$
we define $Cap\left(K,\Omega\right)=Cap\left(\overline{K},\Omega\right)$.\\
The following equivalent condition for parabolicity, useful to us
due to its resemblence to Proposition \ref{prop: Criterion for recurrence},
is given in chapter $5$ of \cite{grigor1999analytic}:
\end{defn}

\begin{prop}
A domain $\Omega\subseteq\mathbb{R}^{2}$ is parabolic iff the capacity
of some/any precompact open set $K\subseteq\Omega$ is zero.\label{prop: Equivalent Condition for Parabolicity }
\end{prop}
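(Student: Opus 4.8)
The plan is to translate the analytic condition $Cap\left(K,\Omega\right)=0$ into the probabilistic definition of parabolicity by means of the capacitory (equilibrium) potentials, which coincide with Brownian hitting probabilities. Fix a precompact open $K$ and a smooth exhaustion $\Omega_{1}\Subset\Omega_{2}\Subset\cdots$ of $\Omega$ with $\overline{K}\subseteq\Omega_{1}$. Let $u_{n}$ equal $1$ on $\overline{K}$, be harmonic in $\Omega_{n}\setminus\overline{K}$, and vanish outside $\Omega_{n}$; the standard link between harmonic functions and Brownian motion gives $u_{n}\left(x\right)=\mathbb{P}_{x}\left(\sigma_{K}<\tau_{\Omega_{n}}\right)$, where $\sigma_{K}$ is the hitting time of $\overline{K}$ and $\tau_{\Omega_{n}}$ the exit time of $\Omega_{n}$. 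Each $u_{n}$ is an admissible competitor in the definition of $Cap\left(K,\Omega\right)$, and by Dirichlet's principle its Dirichlet energy is exactly the finite capacity $Cap\left(K,\Omega_{n}\right)$. Since every admissible $\phi$ has compact support inside some $\Omega_{n}$, monotonicity of $\left\{ Cap\left(K,\Omega_{n}\right)\right\} _{n}$ yields $Cap\left(K,\Omega\right)=\lim_{n}Cap\left(K,\Omega_{n}\right)$, so everything reduces to deciding when these finite capacities tend to $0$. Writing $p_{n}=1-u_{n}$ for the escape potential (harmonic in $\Omega_{n}\setminus\overline{K}$, zero on $\partial K$, one on $\partial\Omega_{n}$), Green's identity identifies $Cap\left(K,\Omega_{n}\right)$ with the flux of $\nabla p_{n}$ across any fixed surface $\Gamma$ separating $\overline{K}$ from infinity inside $\Omega$. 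This capacity criterion is the continuous counterpart of the discrete recurrence criterion in Proposition \ref{prop: Criterion for recurrence}.

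For the direction ``parabolic $\Rightarrow Cap\left(K,\Omega\right)=0$ for every precompact open $K$'', parabolicity means Brownian motion a.s.\ hits $\overline{K}$ before $\partial\Omega$, i.e.\ $p_{n}\downarrow p_{\infty}\equiv0$. Being a decreasing sequence of continuous harmonic functions with continuous limit, the convergence is uniform on the compact surface $\Gamma$ by Dini's theorem; applying the interior gradient bound of Lemma \ref{lem: max-min bounds gradient} on fixed-radius balls centered along $\Gamma$ then forces $\sup_{\Gamma}\left\Vert \nabla p_{n}\right\Vert \to0$, so the flux across $\Gamma$, and with it $Cap\left(K,\Omega_{n}\right)$, tends to $0$. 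I expect this step to be the main obstacle: upgrading the probabilistic statement $p_{\infty}\equiv0$ to the vanishing of the \emph{energies} $\mathcal{E}\left(u_{n}\right)$, since pointwise convergence of potentials carries no a priori control on their Dirichlet energy. The flux representation together with the gradient estimate is precisely the bridge across this gap, and it is satisfying that the paper's own Lemma \ref{lem: max-min bounds gradient} supplies exactly the tool needed.

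For the converse, I would first record the reverse implication ``$Cap\left(A,\Omega\right)=0\Rightarrow$ Brownian motion a.s.\ hits $A$'' for any precompact open $A$: here $\lim_{n}Cap\left(A,\Omega_{n}\right)=0$ equals the flux of the limiting harmonic function $p_{\infty}\geq0$ across $\Gamma$, and if $p_{\infty}\not\equiv0$ then the strong maximum principle makes $p_{\infty}>0$ in $\Omega\setminus\overline{A}$ while Hopf's lemma forces a strictly positive outward normal derivative along $\partial A$, producing a strictly positive flux and a contradiction; hence $p_{\infty}\equiv0$, i.e.\ the walk hits $A$ a.s. Now suppose $Cap\left(K,\Omega\right)=0$ for \emph{some} precompact open $K$. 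Then $u_{n}\uparrow1$, uniformly on compacts by Dini. Given an arbitrary precompact open $K'$, pick $n$ with $u_{n}\geq\tfrac{1}{2}$ on $\overline{K'}$; the truncation $\min\left(2u_{n},1\right)$ is an admissible competitor for $K'$ of energy at most $4\,Cap\left(K,\Omega_{n}\right)\to0$, so $Cap\left(K',\Omega\right)=0$ for every precompact open $K'$. By the Hopf argument just given, Brownian motion then a.s.\ hits every such $K'$, and since every open $U\subseteq\Omega$ contains a precompact ball, this is exactly parabolicity. Together with the trivial ``any $\Rightarrow$ some'' this establishes the equivalence of ``some'' and ``any'' in the statement. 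The remaining points---selecting a regular exhaustion so the potentials genuinely represent hitting probabilities, and smoothing the Lipschitz truncations---are routine and I would dispatch them with standard references.
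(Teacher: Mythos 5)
The paper itself does not prove this proposition: it is quoted from chapter 5 of \cite{grigor1999analytic}, so there is no in-paper argument to compare against. Your proof is essentially the standard one from that reference --- identify the capacitory potentials $u_{n}$ of $K$ in an exhaustion $\Omega_{n}$ with the hitting probabilities $\mathbb{P}_{x}\left(\sigma_{K}<\tau_{\Omega_{n}}\right)$, express $Cap\left(K,\Omega_{n}\right)$ as a flux across a fixed curve $\Gamma$ separating $\overline{K}$ from $\partial\Omega$, and pass to the limit --- and the outline is sound; in particular the step you correctly single out as the crux, upgrading $p_{n}\downarrow0$ to decay of the energies via flux conservation plus the interior gradient estimate of Lemma \ref{lem: max-min bounds gradient} and Dini's theorem on a compact neighbourhood of $\Gamma$, works exactly as you describe. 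Two of the points you defer as ``routine'' do carry real weight for a general precompact open $K$ and should be made explicit: (i) the identification $\mathcal{E}\left(u_{n}\right)=Cap\left(K,\Omega_{n}\right)$, since the paper's competitors are Lipschitz and compactly supported while the harmonic potential need be neither near an irregular $\partial K$; and (ii) the Hopf-lemma step, which requires an interior ball condition on $\partial K$. Both are cleanly handled by first reducing to the case where $K$ is a Euclidean ball: monotonicity of capacity turns ``some $K$'' into ``some ball'', your truncation trick $\min\left(2u_{n},1\right)$ (or finite subadditivity via $\max\left(\phi_{1},\phi_{2}\right)$ over a cover of $\overline{K}$ by balls) recovers arbitrary $K'$, and the probabilistic conclusion only ever needs to be drawn for balls, since every open $U\subseteq\Omega$ contains one. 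Stating that reduction up front would close the only places where the sketch, read literally, could fail.
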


\subsection{Proof of Theorem \ref{thm: main theorem-1}}
\begin{lem}
\label{lem: compact set intersects finite number of polygons}Let
$\left\{ C_{v}\right\} _{v\in V}$ be a circle-packed infinite triangulation
of a domain $\Omega\subseteq\mathbb{R}^{2}$, and let $A\subseteq\Omega$
be a compact subset. Then $A$ intersects finitely many polygons of
the packing.
\end{lem}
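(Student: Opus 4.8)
The plan is to prove the stronger \emph{local} statement that the family $\left\{ P_{v}\right\} _{v\in V}$ is locally finite in $\Omega$, i.e. that every point $a\in\Omega$ has an open neighbourhood meeting only finitely many polygons. The Lemma then follows at once: covering the compact set $A\subseteq\Omega$ by finitely many such neighbourhoods, any polygon meeting $A$ must meet one of them, so only finitely many polygons meet $A$.

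First I would record the structural facts about the tiling on which the argument rests, all of which follow from the construction of the polygons. The closed polygons $\left\{ \overline{P_{v}}\right\} _{v\in V}$ cover $\Omega$ and have pairwise disjoint interiors, so they tile $\Omega$; moreover each face $uvw$ is partitioned by the three segments joining its incenter to the tangency points on its sides into three quadrilaterals lying in $\overline{P_{u}},\overline{P_{v}},\overline{P_{w}}$ respectively, so that $\overline{uvw}\subseteq\overline{P_{u}}\cup\overline{P_{v}}\cup\overline{P_{w}}$. Each $P_{v}$ is convex, and since the graph is locally finite it has exactly $\deg(v)<\infty$ sides, each of which is the dual edge $e^{\dagger}$ of an edge $e=uv$ and is a full common side of precisely the two polygons $P_{u},P_{v}$; likewise every vertex of $P_{v}$ is the incenter of a face $uvw$ incident to $v$ and is a common vertex of precisely the three polygons $P_{u},P_{v},P_{w}$. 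Finally, convexity together with disjointness of interiors yields the separation property $\mathrm{int}(P_{x})\cap\overline{P_{y}}=\emptyset$ whenever $x\neq y$: a point of $\mathrm{int}(P_{x})\cap\partial P_{y}$ would have interior points of $P_{y}$ arbitrarily near it, contradicting $\mathrm{int}(P_{x})\cap\mathrm{int}(P_{y})=\emptyset$.

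Next I would prove local finiteness at a fixed $a\in\Omega$ by splitting into three exhaustive cases according to the position of $a$ in the tiling. If $a$ lies in the interior of some $P_{v}$, a sufficiently small ball around $a$ is contained in $\mathrm{int}(P_{v})$ and, by the separation property, meets no other polygon. If $a$ lies in the relative interior of a side $S$, then $S$ is a full common side of exactly two polygons $P_{u},P_{v}$ which together fill a neighbourhood of $a$; choosing the ball small enough to avoid the finitely many other sides and the endpoints of $S$, the separation property again forbids any third polygon from entering it. If $a$ is a vertex of the tiling, it is the incenter of a single face $uvw$ and hence an interior point of that face, so a small ball around it lies inside $\overline{uvw}\subseteq\overline{P_{u}}\cup\overline{P_{v}}\cup\overline{P_{w}}$ and meets only these three polygons. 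Since a polygon has only finitely many sides and vertices, a suitable ball exists in each case, and in all three cases only finitely many polygons are met.

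The step I expect to be the real obstacle is the structural fact that a neighbourhood of a boundary point of the tiling is filled by only finitely many polygons. This is precisely where the local finiteness of the graph is indispensable: it guarantees that each polygon has finitely many sides and, crucially, that each side is shared with a single neighbour rather than abutting infinitely many shrinking polygons. Without this one could have infinitely many polygons accumulating along a single side of a fixed polygon, which is exactly what happens for circle packings of hyperbolic carriers as the circles pile up against $\partial\Omega$; the hypothesis $A\subseteq\Omega$, placing the accumulation point $a$ in the open carrier rather than on its boundary, is what rules this out here.
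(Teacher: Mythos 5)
Your proposal is correct and follows essentially the same route as the paper: establish that every point of $\Omega$ has a neighbourhood meeting only finitely many polygons by a case analysis on whether the point is interior to a polygon, on the relative interior of a dual edge, or a vertex of the tiling (an incenter of a face), and then conclude by compactness of $A$. Your write-up is somewhat more detailed in justifying the local statement (via the separation property for convex tiles with disjoint interiors), but the decomposition and the key ideas coincide with the paper's proof.
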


\begin{proof}
Every point $x\in\Omega$ has a neighbourhood $U_{x}\subseteq\Omega$
which intersects at most $3$ of the polygons: Since $\Omega=\bigcup_{v\in V}P_{v}$,
we know that $x\in P_{v}$ for some $v\in V$. If $x$ is in $int\left(P_{v}\right)$
then $x$ is contained in exactly one polygon. The tangency graph
of $\left\{ C_{v}\right\} _{v\in V}$ is a triangulation, and hence
the dual graph is $3$-regular. Thus, if $x$ is a vertex of $\partial P_{v}$
whose sides are edges of the dual graph, then $x$ lies in exactly
$3$ polygons. Lastly, if $x$ is on $\partial P_{v}$ but is not
a vertex then $x$ is in exactly $2$ polygons. In all the cases above,
a small enough ball around $x$ intersects at most $3$ polygons.
Since $A\subseteq\Omega=\bigcup_{x\in\Omega}U_{x}$, by compactness,
there exists a finite set $x_{1},x_{2},...,x_{m}\in\Omega$ such that
$A\subseteq\bigcup_{i=1}^{m}U_{x_{i}}$. Because $U_{x_{i}}$ intersects
at most $3$ polygons, $A$ intersects at most $3m$ polygons as needed.
\end{proof}
\begin{defn}
Let $\left\{ C_{v}\right\} _{v\in V}$ be a circle-packed infinite
triangulation with tangency graph $G=\left(V,E\right)$, and let $\alpha>0$.
An edge $e=uv\in E$ is said to be an \textbf{$\alpha$-nice edge}
of $\left\{ C_{v}\right\} _{v\in V}$ if its dual is an $\alpha$-nice
side of both $P_{u}$ and $P_{v}$.
\end{defn}

\begin{lem}
\label{lem: Graph replacement}Let $\left\{ C_{v}\right\} _{v\in V}$
be a circle-packed infinite triangulation of some domain $\Omega\subseteq\mathbb{R}^{2}$
and denote its tangency graph by $G=\left(V,E\right)$. Fix some vertex
$\rho\in V$. Then there exists some angle $\alpha_{0}>0$ such that
for every $\varepsilon>0$, by inserting new circles to the packing,
we can obtain a new circle-packed infinite triangulation $\left\{ C_{v}\right\} _{v\in V'}$
of $\Omega$ with tangency graph $G'=\left(V',E'\right)$ such that
for the Dubejko-weighted networks $\left(G,c\right)$ and $\left(G',c'\right)$
we have:
\begin{enumerate}
\item Set $E_{2}'=\left\{ e'\in E'\mid e'\text{ is not \ensuremath{\alpha_{0}}-nice}\right\} $.
Then:
\[
\sum_{e'\in E_{2}'}c'_{e'}\leq\varepsilon.
\]
\item ${\rm C_{eff}}\left(\rho\leftrightarrow\infty\right)={\rm C_{eff}}'\left(\rho\leftrightarrow\infty\right).$
In particular, $\left(G,c\right)$ is recurrent iff $\left(G',c'\right)$
is recurrent.
\end{enumerate}
\end{lem}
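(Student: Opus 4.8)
The plan is to separate the two conclusions: the equality of effective conductances in (2) comes essentially for free from the coupling already developed, and all the real work goes into (1).

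For (2), I would only ever insert circles into the \emph{interiors} of the original triangular faces, and only finitely many into each face. The original faces have pairwise disjoint interiors, so writing $W=V'\setminus V$ as the (finite, per triangle) sets of inserted vertices, Corollary~\ref{cor: coupling} applies verbatim: censoring $W$ from the walk on $\left\{C_v\right\}_{v\in V'}$ and deleting repetitions returns the Dubejko walk on $\left\{C_v\right\}_{v\in V}$. Since passing to a censored (traced) reversible chain and deleting self-loops both leave all effective conductances between surviving vertices unchanged, ${\rm C_{eff}}'(\rho\leftrightarrow\infty)={\rm C_{eff}}(\rho\leftrightarrow\infty)$, which also re-proves the recurrence equivalence. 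Thus it remains to choose the inserted circles so that (1) holds. To this end I would first translate non-niceness into geometry: by Lemma~\ref{cor:Angle larger than arctan of sqrt of ratio} the angle of $P_u$ at the incenter of a face $uvw$ exceeds $2\tan^{-1}\sqrt{r_u/\min\left\{r_v,r_w\right\}}$, so an edge can fail to be $\alpha_0$-nice only when some incident face has one circle much smaller than the other two (a ``pinched'' face). Fix $\alpha_0$ a small universal constant. Using Lemma~\ref{lem: compact set intersects finite number of polygons} I would enumerate the faces $f_1,f_2,\dots$, and into each pinched face insert a \emph{cascade of inner Soddy circles}, deep enough that the residual non-nice weight coming from $f_j$ is at most $\varepsilon 2^{-j}$.

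The cascade is the heart of the argument. In a pinched face $abc$ with $c$ the small vertex, insert the inner Soddy circle $s_1$ of $abc$, then the Soddy circle $s_2$ of the subface $abs_1$, then $s_3$ of $abs_2$, and so on, each $s_{k+1}$ tangent to $a$, $b$, $s_k$; Descartes' Theorem~\ref{thm:(Descartes'-Theorem)} together with the inradius formula of Proposition~\ref{prop: incircle radius formula} control all the radii. The crucial feature is that every inserted circle $s_k$ has a neighbour $s_{k\pm1}$ of comparable radius, so by Lemma~\ref{cor:Angle larger than arctan of sqrt of ratio} the edges $as_k,bs_k,s_{k-1}s_k$ are all $\alpha_0$-nice; moreover once the comparable circle $s_1$ sits beside $c$, the original edges $ac,bc$ become $\alpha_0$-nice as well, and $ab$, already nice, stays nice. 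The one face that remains pinched is the innermost triangle $abs_K$, whose two non-nice edges have weight at most a constant times $\sqrt{r_{s_K}/\min\left\{r_a,r_b\right\}}$ by Lemma~\ref{cla:two radii are larger}. Descartes forces the radii $r_{s_k}$ to decrease strictly to $0$, so a finite depth $K=K_j$ makes this residual at most $\varepsilon 2^{-j}$. Performing this in every pinched face simultaneously (the cascades live in disjoint face interiors) turns every originally non-nice edge nice while creating non-nice edges only at the innermost triangles; summing the budgets gives $\sum_{e'\in E_2'}c'_{e'}\le\varepsilon$, which is (1).

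The step I expect to be the main obstacle is verifying that the cascade creates \emph{no} new non-nice edge except at its bottom triangle $abs_K$ --- precisely the place where the comparable-radius neighbours are used, and where one must confirm via Lemma~\ref{cor:Angle larger than arctan of sqrt of ratio} that none of the angles at the $s_k$ drop below $\alpha_0$ and that the surrounding non-pinched faces are left undisturbed. The remaining points are bookkeeping: that finitely many insertions per face keep $\left\{C_v\right\}_{v\in V'}$ a locally finite triangulation of the same carrier $\Omega$ with disjoint-interior faces, so that the coupling of the second paragraph applies, and that the per-face budgets $\varepsilon 2^{-j}$ sum to $\varepsilon$.
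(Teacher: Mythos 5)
Your proposal is correct and follows essentially the same route as the paper: the same cascade of inner Soddy circles accumulating at the tangency point of the two large circles, Descartes' Theorem to show consecutive inserted radii are comparable (the paper gets $r_i/r_{i+1}\leq 7$), Lemma \ref{cor:Angle larger than arctan of sqrt of ratio} to verify that the only possibly non-nice angle is at the bottom triangle, a per-face budget of $\varepsilon 2^{-j}$ for the two residual edges, and Corollary \ref{cor: coupling} for part (2). The only cosmetic difference is that the paper sidesteps the trace/loop-erasure preservation of ${\rm C_{eff}}$ at $\rho$ by choosing $\alpha_{0}$ small enough that no circles are ever inserted in faces incident to $\rho$, whereas you invoke the general preservation principle directly; both are valid.
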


\begin{proof}
Choose $\alpha_{0}$ to be some positive angle such that $\alpha_{0}<10^{-5}$
and such that all of the polygons of $\rho$ and its neighbours are
$\alpha_{0}$-nice (this is possible by locally finiteness of $G$).\\
We insert circles in the following way: There is a countable amount
of faces in the triangulation (since they have nonempty disjoint interiors).
Enumerate them in some order $\left\{ F_{n}\right\} _{n\in\mathbb{N}}$
and fix some $n\in\mathbb{N}$. Write $F_{n}=\left\{ x,y,z\right\} $
for some $x,y,z\in V$ ordered by $r_{z}\geq r_{y}\geq r_{x}$, and
denote by $M$ and $r$ the incenter and inradius of the triangle
$xyz$. Denote the angles of $P_{x},P_{y},P_{z}$ at $M$ by $\alpha_{x},\alpha_{y},\alpha_{z}$.
Note that for each $v\in\left\{ x,y,z\right\} $ we have $\tan\frac{\alpha_{v}}{2}=\frac{r_{v}}{r}$
which implies $\tan\frac{\alpha_{z}}{2}\geq\tan\frac{\alpha_{y}}{2}\geq\tan\frac{\alpha_{x}}{2}$
and so $\alpha_{z}\geq\alpha_{y}\geq\alpha_{x}$. If $\alpha_{x}>\alpha_{0}$,
insert no new circles, since in this case $\alpha_{z},\alpha_{y},\alpha_{x}>\alpha_{0}$.
Otherwise, set $v_{0}=x$ and for each $i\in\left\{ 1,2,...,k\right\} $,
with $k=k\left(n\right)$ to be determined later, add a circle $C_{v_{i}}$
tangent to the three circles of $v_{i-1},y,z$ and contained in the
face $v_{i-1}yz$, effectively splitting this face into $3$ faces.
This process creates a chain of vertices $x=v_{0},v_{1},v_{2},...,v_{k}$
with respective radii $r_{x}=r_{0},r_{1},r_{2},...,r_{k}$ and with
centers approaching the tangency point of $C_{y}$ and $C_{z}$, as
in Figure \ref{fig: Circle insertion process-1}. Denote the new edge
weights by $c'$ and the new polygons by $P'$.
\begin{figure}[H]
\includegraphics[scale=0.8]{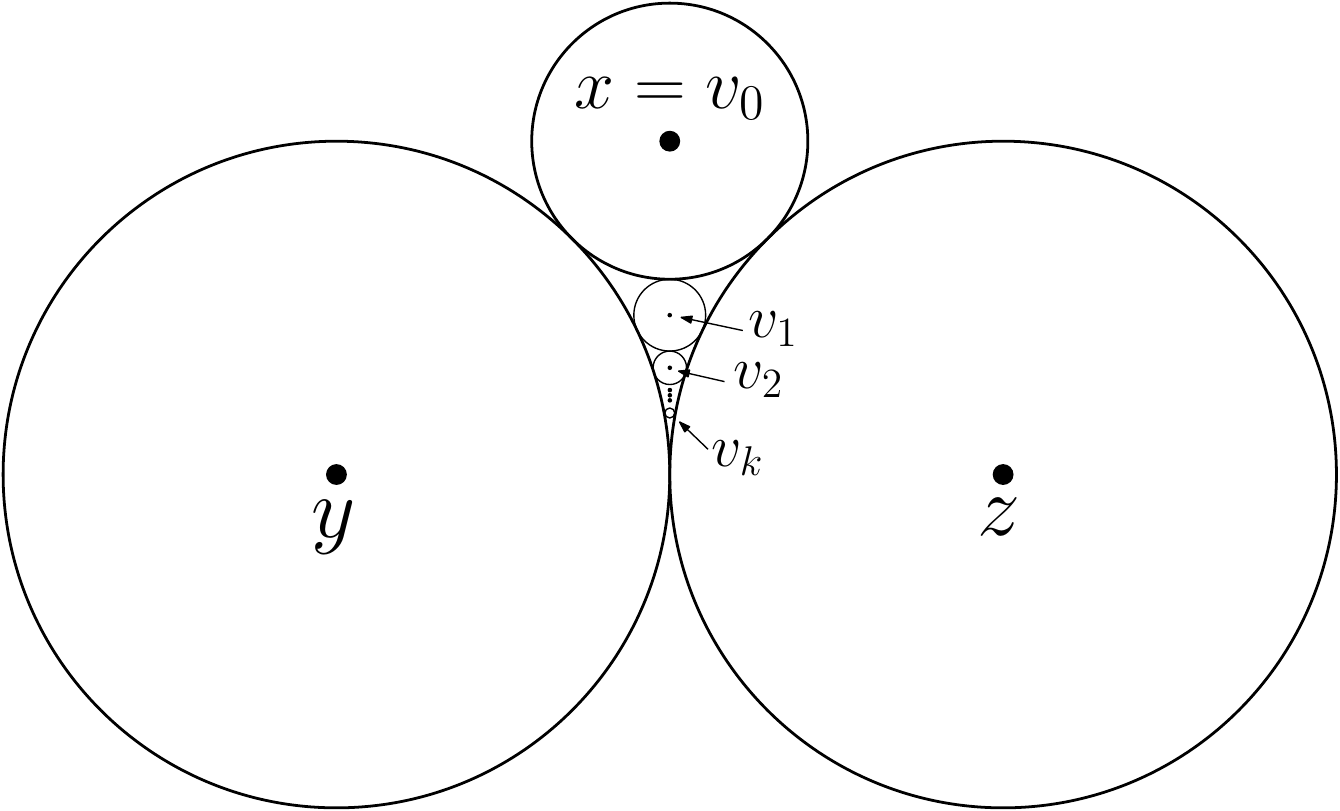}\caption{\label{fig: Circle insertion process-1}Circle insertion process}
\end{figure}
 We first show that $k=k\left(n\right)$ can be chosen such that $c'_{v_{k}y}+c'_{v_{k}z}<\frac{\varepsilon}{2^{n}}$:
Since the circles $C_{v_{1}},C_{v_{2}},...,C_{v_{k}}$ have disjoint
interiors and are contained in the triangle $xyz$, we have:
\[
\sum_{i=1}^{k}\pi r_{i}^{2}=\sum_{i=1}^{k}Area\left(C_{v_{i}}\right)=Area\left(\bigcup_{i=1}^{k}C_{v_{i}}\right)\leq Area\left(\triangle_{xyz}\right),
\]
so the series $\sum_{i=1}^{\infty}\pi r_{i}^{2}$ is bounded and in
particular the sequence of terms $\pi r_{k}^{2}$ tends to zero, so
$\lim_{k\rightarrow\infty}r_{k}=0$. Thus, by Proposition \ref{Prop: Formula for Dubejko Weights}:
\begin{align*}
c'_{v_{k}y} & =\frac{\sqrt{r_{k}r_{y}}}{r_{k}+r_{y}}\left(\sqrt{\frac{r_{z}}{r_{k}+r_{y}+r_{z}}}+\sqrt{\frac{r_{k-1}}{r_{k}+r_{y}+r_{k-1}}}\right)\leq\frac{\sqrt{r_{k}r_{y}}}{r_{k}+r_{y}}\cdot2\overset{_{k\rightarrow\infty}}{\longrightarrow}0.
\end{align*}
A similar calculation replacing $y$ with $z$ shows that $\lim_{k\rightarrow\infty}c'_{v_{k}z}=0$.\\
We next show that after the circle insertion process for the face
$F_{n}=\left\{ x,y,z\right\} $, out of all the incenters in the new
faces created, the only angle that might not be $\alpha_{0}$-nice
is the angle of $P'_{v_{k}}$ at the incenter of $v_{k}yz$: By Descartes'
Theorem \ref{thm:(Descartes'-Theorem)} for the four mutually-tangent
circles with centers $y,z,v_{i},v_{i+1}$ we get:
\[
\frac{1}{r_{i+1}}=\frac{1}{r_{y}}+\frac{1}{r_{z}}+\frac{1}{r_{i}}+2\sqrt{\frac{1}{r_{y}r_{z}}+\frac{1}{r_{y}r_{i}}+\frac{1}{r_{z}r_{i}}},
\]
where we have chosen the plus sign in the formula above because $r_{i+1}$
is the smaller-radius solution. In particular, $\frac{1}{r_{i+1}}\geq\frac{1}{r_{i}}$
so $r_{i+1}\leq r_{i}$ and by induction for all $i$ we have $r_{i}\leq r_{0}=r_{x}\leq r_{y,}r_{z}$
and so:
\[
\frac{r_{i}}{r_{i+1}}=1+\frac{r_{i}}{r_{y}}+\frac{r_{i}}{r_{z}}+2\sqrt{\frac{r_{i}^{2}}{r_{y}r_{z}}+\frac{r_{i}}{r_{y}}+\frac{r_{i}}{r_{z}}}\leq3+2\cdot\sqrt{3}\leq7.
\]
Denote by $Q_{i}$ the incenter of the triangle $yv_{i}v_{i+1}$ for
any $i\in\left\{ 0,1,...,k-1\right\} $. We claim that the respective
angles $\alpha,\beta,\gamma$ of $P'_{y},P'_{v_{i}},P'_{v_{i+1}}$
at $Q_{i}$ are larger than $\alpha_{0}:$ Indeed, since $\alpha_{0}<10^{-5}$
we can use Lemma \ref{cor:Angle larger than arctan of sqrt of ratio}
to deduce:
\begin{align*}
\alpha & >2\tan^{-1}\left(\sqrt{\frac{r_{y}}{r_{x}}}\right)\geq2\tan^{-1}\left(1\right)=\frac{\pi}{2}>\alpha_{0}.\\
\beta & >2\tan^{-1}\left(\sqrt{\frac{r_{i}}{r_{i+1}}}\right)\geq2\tan^{-1}\left(1\right)=\frac{\pi}{2}>\alpha_{0}.\\
\gamma & >2\tan^{-1}\left(\sqrt{\frac{r_{i+1}}{r_{i}}}\right)\geq2\tan^{-1}\left(\sqrt{\frac{1}{7}}\right)>\alpha_{0}.
\end{align*}
A similar calculation with $z$ replacing $y$ shows that for every
$i\in\left\{ 0,1,...,k-1\right\} $ the angles of $P_{z}',P_{v_{i}}',P_{v_{i+1}}'$
at the incenter of $zv_{i}v_{i+1}$ are larger than $\alpha_{0}.$
Denote by $T$ the incenter of $v_{k}yz$. Denote by $\delta_{y},\delta_{z}$
the respective angles of $P'_{y}$ and $P'_{z}$ at  $T$. Then by
Lemma \ref{cor:Angle larger than arctan of sqrt of ratio} again we
get for each $u\in\left\{ y,z\right\} $:
\begin{align*}
\delta_{u} & >2\tan^{-1}\left(\sqrt{\frac{r_{u}}{r_{k}}}\right)\geq2\tan^{-1}\left(1\right)=\frac{\pi}{2}>\alpha_{0}.
\end{align*}
So indeed, out of the angles of new polygons contained in $xyz$,
only the angle of $P'_{v_{k}}$ at the incenter of $v_{k}yz$ might
not be $\alpha_{0}$-nice. \\
For the first statement of the lemma, write $F_{n}=\left\{ x_{n},y_{n},z_{n}\right\} $
as above for every $n\in\mathbb{N}$. Set $I\subseteq\mathbb{N}$
to be the set of indices of faces where we've inserted new circles.
For every $i\in I$, let $s_{i}\in V'$ be the vertex of the last
circle inserted to $F_{n}$. We claim that $E_{2}'\subseteq\bigcup_{i\in I}\left\{ s_{i}y_{i},s_{i}z_{i}\right\} $:
Let $e'\in E_{2}'$. Then its dual is a side of a polygon such that
one of its endpoints has an angle that is smaller than $\alpha_{0}$.
By the angle analysis above, the only such angles are of $P'_{s_{i}}$
at the incenter of $s_{i}y_{i}z_{i}$ for some $i\in I$. So $e'$
must be either $s_{i}y_{i}$ or $s_{i}z_{i}$ as needed. Therefore,
we can bound:
\[
\sum_{e'\in E_{2}'}c'_{e'}\leq\sum_{i\in I}\left(c'_{s_{i}y_{i}}+c'_{s_{i}z_{i}}\right)\leq\sum_{i\in I}\frac{\varepsilon}{2^{i}}\leq\sum_{i\in\mathbb{N}}\frac{\varepsilon}{2^{i}}=\varepsilon.
\]
For the second statement, recall that (for a reference read for example
the paragraph preceding Theorem $2.3$ of \cite{lyons2017probability}):
\[
{\rm C_{eff}}'\left(\rho\leftrightarrow\infty\right)=\pi'\left(\rho\right)\cdot\mathbb{P}_{\rho}'\left[\rho\rightarrow\infty\right],
\]
where $\pi'\left(\rho\right)=\sum_{\substack{v\in V'\\
v\sim\rho
}
}c'_{\rho v}$, $\mathbb{P}_{\rho}'$ is the probability measure of the weighted
random walk on $\left(G',c'\right)$ started at $\rho$ and $\rho\rightarrow\infty$
is the event that the random walk never returns to $\rho$. In the
circle insertion process we have not added circles in the faces incident
to $\rho$ due to the choice of $\alpha_{0}$, and so $\pi\left(\rho\right)=\pi'\left(\rho\right)$.
In addition, Corollary \ref{cor: coupling} shows that the weighted
random walks on $\left\{ C_{v}\right\} _{v\in V'}$ started at $\rho$
can be coupled to the weighted random walk on $\left\{ C_{v}\right\} _{v\in V}$
by first censoring it to $V$ and then deleting repetitions. In the
circle insertion process we have not added vertices adjacent to $\rho$,
and so no appearances of $\rho$ will be deleted when censoring and
deleting repetitions. Thus, in the coupling we have that the random
walker on $G'$ returns to $\rho$ iff the random walker on $G$ returns
to $\rho$, and so $\mathbb{P}_{\rho}\left[\rho\rightarrow\infty\right]=\mathbb{P_{\rho}}'\left[\rho\rightarrow\infty\right]$.
We conclude:
\[
{\rm C_{eff}}\left(\rho\leftrightarrow\infty\right)=\pi\left(\rho\right)\mathbb{P}_{\rho}\left[\rho\rightarrow\infty\right]=\pi'\left(\rho\right)\mathbb{P}_{\rho}'\left[\rho\rightarrow\infty\right]={\rm C_{eff}}'\left(\rho\leftrightarrow\infty\right).
\]
Finally, the carrier of $\left\{ C_{v}\right\} _{v\in V'}$ is the
same as the carrier of $\left\{ C_{v}\right\} _{v\in V}$ since in
each face of the original packing we have added finitely many circles.
\end{proof}
\noindent We are now ready to prove the main result of this paper,
restated here:

\mainthm*
\begin{proof}
Fix a vertex $\rho\in V$ and let $\varepsilon>0$. We will show that
${\rm C_{eff}}\left(\rho\leftrightarrow\infty\right)=0$, which implies
recurrence.\\
\uline{Step 1: Insert new circles to the packing:} By Lemma \ref{lem: Graph replacement},
we may assume without loss of generality that there exists some angle
$\alpha_{0}>0$ such that for $E_{2}=\left\{ e\in E\mid e\text{ is not }\alpha_{0}\text{-nice}\right\} $
we have $\sum_{e\in E_{2}}c_{e}\leq\varepsilon$: Indeed, if this
is not the case we can replace $\left\{ C_{v}\right\} _{v\in V}$
with $\left\{ C_{v}'\right\} _{v\in V'}$ from the Lemma, which has
the same carrier and the same effective conductance ${\rm C_{eff}}\left(\rho\leftrightarrow\infty\right)$.\\
\uline{Step 2: construct a function \mbox{$f$} on the vertices:}
By the parabolicity of $\Omega$, by \ref{prop: Equivalent Condition for Parabolicity }
applied to the compact set with nonempty interior $K=P_{\rho}$, there
exists a Lipschitz function $\phi:\Omega\rightarrow\left[0,1\right]$
compactly supported in $\Omega$ with $\phi\mid_{P_{\rho}}\equiv1$
such that its Dirichlet energy satisfies:
\[
\iintop_{\Omega}\left\Vert \nabla\phi\right\Vert ^{2}dA<\frac{\sin\left(\frac{1}{2}\alpha_{0}\right)}{2C_{1}\left(\alpha_{0}\right)}\cdot\varepsilon,
\]
where $C_{1}\left(\alpha_{0}\right)$ is the constant from Lemma \ref{lem: integration on polygon}.
By Lemma \ref{lem: compact set intersects finite number of polygons},
the support of $\phi$ intersects a finite set of polygons $\left\{ P_{w}\right\} _{w\in W}$,
and denote their union by $A=\bigcup_{w\in W\backslash\rho}P{}_{w}$.
Notice that $\phi\mid_{\partial A}\equiv0$. Without loss of generality,
we may now assume that $\phi$ is harmonic in $int\left(A\right)\backslash P_{\rho}$:
If this is not the case, we can replace the values of $\phi$ there
by the unique harmonic solution $\psi$ to the Dirichlet problem with
boundary conditions $\psi\mid_{\partial A}\equiv0$ and $\psi\mid_{\partial P{}_{\rho}}\equiv1$
. This solution is indeed unique and minimizes the Dirichlet energy
due to the piecewise smoothness of the boundary (being a finite union
of polygonal lines). Notice that we may still assume that $0\leq\phi\leq1$
due to the maximum and minimum principles for harmonic functions.
Once $\phi$ is defined, it induces a function $f$ on $V$ by letting
$f\left(v\right)$ be the value of $\phi$ on the center of $C_{v}$.
\\
Note that under the extra assumption that the continuous function
$\phi$ is harmonic, the function $f$ defined here coincides with
the function defined in \cite{gurel2017recurrence} in the proof that
parabolicity of the domain implies recurrence of the simple random
walk.\\
\uline{Step 3: Bound the discrete energy of \mbox{$f$}:} Recall
that $E_{2}=\left\{ e\in E\mid e\text{ is not \ensuremath{\alpha_{0}}-nice}\right\} $,
and set $E_{1}=E\backslash E_{2}$. Then the Dirichlet energy of $f$
is:
\[
\mathcal{E}\left(f\right)=\underbrace{\sum_{uv\in E_{1}}c{}_{uv}\left(f\left(v\right)-f\left(u\right)\right)^{2}}_{\mathcal{E}_{1}}+\underbrace{\sum_{uv\in E_{2}}c{}_{uv}\left(f\left(v\right)-f\left(u\right)\right)^{2}}_{\mathcal{E}_{2}}.
\]
By using $0\leq f\leq1$ and the choice of $\alpha_{0}$, we can bound
the second summand:
\begin{equation}
\mathcal{E}_{2}\leq\sum_{uv\in E_{2}}c{}_{uv}\cdot1\leq\varepsilon\label{eq: bound on non-nice edges}
\end{equation}
For any other edge $e=uv\in E_{1}$, denote by $M$,$N$ the ends
of its dual edge, i.e the incenters of the two faces incident to $e$.
For any $a,b\in\mathbb{R}^{2}$ denote by $T_{ab}$ the straight line
segment connecting $a$ to $b$. For every $z\in T_{MN}$, we can
use the identity $\left(x+y\right)^{2}\leq2x^{2}+2y^{2}$ to obtain:
\[
c_{uv}\left(f\left(v\right)-f\left(u\right)\right)^{2}=c_{uv}\left(\phi\left(v\right)-\phi\left(u\right)\right)^{2}\leq2c{}_{uv}\left(\phi\left(v\right)-\phi\left(z\right)\right)^{2}+2c{}_{uv}\left(\phi\left(u\right)-\phi\left(z\right)\right)^{2}.
\]
Since this inequality holds for any $z\in T_{MN},$ it also holds
in expectation:
\[
c_{uv}\left(f\left(v\right)-f\left(u\right)\right)^{2}\leq\frac{2c_{uv}}{\left|T_{MN}\right|}\intop_{T_{MN}}\left(\phi\left(v\right)-\phi\left(z\right)\right)^{2}dz+\frac{2c_{uv}}{\left|T_{MN}\right|}\intop_{T_{MN}}\left(\phi\left(u\right)-\phi\left(z\right)\right)^{2}dz.
\]
We continue to bound the first summand above. Since $\phi$ is smooth
except maybe at the boundaries of polygons, we can write $\phi\left(v\right)-\phi\left(z\right)=\intop_{T_{vz}}\nabla\phi\left(\underline{r}\right)d\underline{r}$.
Thus, using the definition of $c_{uv}$ and Cauchy-Schwarz for line
integrals we bound:
\[
\frac{2c_{uv}}{\left|T_{MN}\right|}\intop_{T_{MN}}\left(\phi\left(v\right)-\phi\left(z\right)\right)^{2}dz=\frac{2}{\left|T_{uv}\right|}\intop_{T_{MN}}\left(\intop_{T_{vz}}\nabla\phi\left(\underline{r}\right)d\underline{r}\right)^{2}dz\leq2\intop_{T_{MN}}\frac{\left|T_{vz}\right|}{\left|T_{uv}\right|}\intop_{T_{vz}}\left\Vert \nabla\phi\left(\underline{r}\right)\right\Vert ^{2}drdz.
\]
Since the angles $\measuredangle vzM$ and $\measuredangle vzN$ add
up to $\pi$ radians, one of these angles is at least $\frac{\pi}{2}$.
Using this and $\left|T_{uv}\right|\geq r_{v}$ we bound:
\[
\frac{\left|T_{vz}\right|}{\left|T_{uv}\right|}\leq\max\left\{ \frac{\left|T_{vM}\right|}{r_{v}},\frac{\left|T_{vN}\right|}{r_{v}}\right\} =\max\left\{ \frac{1}{\sin\measuredangle vMN},\frac{1}{\sin\measuredangle vNM}\right\} \leq\frac{1}{\sin\left(\frac{1}{2}\alpha_{0}\right)},
\]
where the last inequality holds because $uv\in E_{1}$ and hence $T_{MN}$
is $\alpha_{0}$-nice for both $P_{u}$ and $P_{v}$. Using this and
a similar calculation for $u$ instead of $v$, we get:
\begin{align*}
c_{uv}\left(f\left(v\right)-f\left(u\right)\right)^{2} & \leq\frac{2}{\sin\left(\frac{1}{2}\alpha_{0}\right)}\left(\intop_{T_{MN}}\intop_{T_{vz}}\left\Vert \nabla\phi\left(\underline{r}\right)\right\Vert ^{2}drdz+\intop_{T_{MN}}\intop_{T_{uz}}\left\Vert \nabla\phi\left(\underline{r}\right)\right\Vert ^{2}drdz\right).
\end{align*}
For each $v\in V$ set $X_{v}$ to be the set of sides of $P_{v}$
that are dual to some edge $vu\in E_{1}$. Then summing up the last
inequality over all $uv\in E_{1}$ and using Lemma \ref{lem: Integration on parts of a polygon}
we bound:
\begin{align}
\mathcal{E}_{1} & \leq\frac{2}{\sin\left(\frac{1}{2}\alpha_{0}\right)}\cdot\sum_{v\in V}\sum_{S\in X_{v}}\left(\intop_{S}\intop_{T_{vz}}\left\Vert \nabla\phi\left(\underline{r}\right)\right\Vert ^{2}drdz\right)\leq\frac{2}{\sin\left(\frac{1}{2}\alpha_{0}\right)}\cdot\sum_{v\in V}C_{1}\left(\alpha_{0}\right)\iintop_{P_{v}}\left\Vert \nabla\phi\right\Vert ^{2}dA\leq\nonumber \\
 & \leq\frac{2C_{1}\left(\alpha_{0}\right)}{\sin\left(\frac{1}{2}\alpha_{0}\right)}\cdot\iintop_{\Omega}\left\Vert \nabla\phi\right\Vert ^{2}dA\leq\varepsilon.\label{eq: bound on energy of E2-1}
\end{align}
Combining inequalities \ref{eq: bound on non-nice edges} and \ref{eq: bound on energy of E2-1}
we can finally bound the Dirichlet energy of $f$:
\[
\mathcal{E}\left(f\right)=\mathcal{E}_{1}+\mathcal{E}_{2}\leq2\varepsilon.
\]
\uline{Step 5: Deducing recurrence:} We've found a function $f:V\rightarrow\mathbb{R}$
with $\mathcal{E}\left(f\right)\leq2\varepsilon$. Furthermore, since
$\phi$ vanishes outside a finite union of polygons, $f$ is finitely
supported. Lastly, $f\left(\rho\right)=1$. By Dirichlet's Principle,
the effective conductance in the network $\left(G,c\right)$ satisfies
${\rm C_{eff}}\left(\rho\leftrightarrow\infty\right)\leq2\varepsilon.$
Since $\varepsilon$ is arbitrary, ${\rm C_{eff}}\left(\rho\leftrightarrow\infty\right)=0$
and hence the network is recurrent.
\end{proof}

\section{Extension to Higher Dimensions\label{sec:Extension-to-higher}}

The goal of this section is to define a higher-dimensional analogue
of the Dubejko weights and prove Proposition \ref{prop:higher dimensions martingale theorem},
showing that the weighted random walk is a martingale.
\begin{defn}
Given a set of spheres $\left\{ S_{v}\right\} _{v\in V}$ in $\mathbb{R}^{3}$
with disjoint interiors, we can define its \textbf{tangency graph}:
The vertex set is $V$ and two vertices $v,u\in V$ are connected
by an edge iff their spheres $S_{v}$ and $S_{u}$ are tangent. Given
two neighbouring vertices $u,v\in V$, we define their \textbf{common
tangent plane} to be the unique plane in $\mathbb{R}^{3}$ that is
tangent to both $S_{v}$ and $S_{u}$. For each neighbour of some
$v\in V$, their common tangent plane cuts $\mathbb{R}^{3}$ into
two half-spaces, one of them containing $v$. If the intersection
over all the neighbours of $v$ of the half-spaces containing $v$
is bounded, we say that $v$ is \textbf{covered} and call the intersection
the \textbf{polyhedron of $v$} denoted by $P_{v}$. We say that $\left\{ S_{v}\right\} _{v\in V}$
is \textbf{covering} if all vertices are covered and for every edge
$uv\in E$ their respective polyhedra intersect along a common face.
\end{defn}

\begin{defn}
Let $\left\{ S_{v}\right\} _{v\in V}$ be a covering 3-sphere packing
with tangency graph $G=\left(V,E\right)$. Then in a similar fashion
to the Dubejko weights, we define the weight of an edge $uv\in E$
to be:
\begin{equation}
c_{uv}:=\frac{A_{uv}}{\left\Vert v-u\right\Vert },\label{eq: 3d weights}
\end{equation}
where $A_{uv}$ is the area of the shared face of $P_{u}$ and $P_{v}$.
\end{defn}

\begin{rem}
Similar definitions can be made in dimensions $d>3$, where instead
of the area we use the $d-1$-dimensional volume of the shared face
of the polytopes of $P_{u}$ and $P_{v}$, and the following Proposition
would remain true.
\end{rem}

\begin{prop}
\label{prop:higher dimensions martingale theorem}Let $\left\{ S_{v}\right\} _{v\in V}$
be a covering 3-sphere packing with tangency graph $G=\left(V,E\right)$.
Identify the vertices of $V$ with the centers of the their spheres,
and let $\left(Z_{n}\right)_{n\in\mathbb{N}}$ be the sequence of
centers of spheres visited in a random walk weighted according to
Equation \ref{eq: 3d weights} started at some vertex $v_{0}\in V$
. Then $\left(Z_{n}\right)_{n\in\mathbb{N}}$ is a martingale.
\end{prop}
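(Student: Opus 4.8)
The plan is to follow the structure of the proof of Theorem \ref{Theorem: Dubejko Weights are a Martingale} almost verbatim, replacing the planar ``rotation by $\frac{\pi}{2}$'' device by a three-dimensional one. As in that proof, the Markov property reduces the martingale property to showing that the one-step expected displacement vanishes at every vertex. Fix $v\in V$ with neighbours $u_1,\dots,u_n$, write $\overrightarrow{e_i}=u_i-v$, and let $\widehat{e_i}$ be its unit vector. With $\pi\left(v\right)=\sum_i c_{vu_i}$ and $p_{vu_i}=c_{vu_i}/\pi\left(v\right)$, I would compute
\[
\sum_{i=1}^n p_{vu_i}\,\overrightarrow{e_i}=\frac{1}{\pi\left(v\right)}\sum_{i=1}^n\frac{A_{vu_i}}{\left\Vert \overrightarrow{e_i}\right\Vert }\,\overrightarrow{e_i}=\frac{1}{\pi\left(v\right)}\sum_{i=1}^n A_{vu_i}\,\widehat{e_i},
\]
so that it is enough to establish the single geometric identity $\sum_{i=1}^n A_{vu_i}\,\widehat{e_i}=0$.

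The key observation is that $A_{vu_i}\,\widehat{e_i}$ is precisely the outward area vector of the face of the polyhedron $P_v$ shared with $P_{u_i}$. Indeed, since $S_v$ and $S_{u_i}$ are externally tangent, their common tangent plane passes through the tangency point, which lies on the segment $vu_i$, and is orthogonal to that segment. By definition $P_v$ is the intersection of the closed half-spaces, bounded by these tangent planes, that contain $v$; hence the face of $P_v$ lying in the common tangent plane of $S_v$ and $S_{u_i}$ has outward unit normal pointing from $v$ toward $u_i$, namely $\widehat{e_i}$. By the covering hypothesis this face is exactly the shared face of $P_v$ and $P_{u_i}$, whose area is $A_{vu_i}$, and the faces of $P_v$ are indexed bijectively by the neighbours $u_i$. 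Thus $A_{vu_i}\,\widehat{e_i}$ is the area of the $i$-th face of $P_v$ times its outward unit normal.

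Finally I would invoke the closed-surface vector area identity, the three-dimensional analogue of the planar fact that the dual edges $\overrightarrow{f_i}$ trace a closed curve: for any bounded polyhedron the sum over its faces of (face area) $\times$ (outward unit normal) vanishes. This follows from the divergence theorem applied to constant vector fields, since for every $\vec{c}\in\mathbb{R}^3$ we have $\vec{c}\cdot\sum_{i} A_{vu_i}\widehat{e_i}=\intop_{\partial P_v}\vec{c}\cdot\hat{n}\,dA=\intop_{P_v}\left(\nabla\cdot\vec{c}\right)dV=0$, and arbitrariness of $\vec{c}$ forces the vector sum itself to vanish. Applying this to $P_v$ yields $\sum_{i=1}^n A_{vu_i}\,\widehat{e_i}=0$, completing the proof. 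The main obstacle is the geometric bookkeeping of the middle paragraph: verifying that the outward normal to each face is exactly $\widehat{e_i}$ and that the covering assumption forces the faces of $P_v$ to correspond bijectively to the neighbours with the stated areas. Once this is secured the vector area identity finishes the argument at once, and the very same reasoning carries over to dimensions $d>3$ with $(d-1)$-dimensional volumes replacing areas, as noted in the remark preceding the Proposition.
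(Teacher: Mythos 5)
Your proposal is correct and follows essentially the same route as the paper: reduce the martingale property to the identity $\sum_i A_{vu_i}\widehat{e_i}=0$, identify each summand as the outward area vector of a face of $P_v$, and conclude by the closed-surface vector area identity via the divergence theorem applied to constant vector fields. Your middle paragraph just spells out the geometric bookkeeping a bit more explicitly than the paper does.
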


\begin{proof}
Set $\pi\left(x\right)=\sum_{y\sim x}c_{xy}$ and denote the transition
probabilities by $P_{x,y}=\frac{c_{xy}}{\pi\left(x\right)}$. Let
$v\in V$, and let $u_{1},u_{2},...,u_{n}\in V$ be its neighbors
in $G$. It is enough to show:
\[
\sum_{i=1}^{n}P_{v,u_{i}}\cdot\left(u_{i}-v\right)=0.
\]
Plugging in the definition of the weights, we get:
\[
\sum_{i=1}^{n}P_{v,u_{i}}\cdot\left(u_{i}-v\right)=\frac{1}{\pi\left(v\right)}\sum_{i=1}^{n}A_{u_{i}v}\frac{u_{i}-v}{\left\Vert u_{i}-v\right\Vert }.
\]
Write $\partial P_{v}=\bigcup_{i=1}^{n}Q_{vu_{i}}$, where $Q_{vu_{i}}$
is the common face of $P_{v}$ and $P_{u_{i}}$, and notice that $\frac{u_{i}-v}{\left\Vert u_{i}-v\right\Vert }$
is the outward-pointing normal of $Q_{vu_{i}}$. Thus, we can write:
\[
\sum_{i=1}^{n}A_{u_{i}v}\frac{u_{i}-v}{\left\Vert u_{i}-v\right\Vert }=\sum_{i=1}^{n}Area\left(Q_{vu_{i}}\right)\cdot\hat{n}_{Q_{vu_{i}}}.
\]
We wish to show that this vector is zero. Let $\hat{m}$ be any unit
vector, then using the divergence theorem and the fact that the divergence
of a constant vector field is zero we find:
\begin{align*}
\hat{m}\cdot\left(\sum_{i=1}^{n}Area\left(Q_{vu_{i}}\right)\cdot\hat{n}_{Q_{vu_{i}}}\right) & =\oiintop_{\partial P_{v}}\hat{m}\cdot dS=\iiintop_{P_{v}}\left(\nabla\cdot\hat{m}\right)dV=0.
\end{align*}
 Since this is true for any unit vector $\hat{m}$, we have $\sum_{i=1}^{n}A_{u_{i}v}\frac{u_{i}-v}{\left\Vert u_{i}-v\right\Vert }=0$
as needed.
\end{proof}
\begin{rem}
The second part of the last proof is simply the fact that the vector
area of any closed surface is zero. A physical interpretation of this
claim is the following: Fill a metallic shell in the shape of $\partial P_{v}$
with water and put it somewhere far away in outer space. The force
acting on each face due to the water pressure is proportional to its
area and acts outwards. Hence, $\sum_{i=1}^{n}Area\left(Q_{vu_{i}}\right)\cdot\hat{n}_{Q_{vu_{i}}}=0$
is exactly the claim that the net force acting on the shell is zero
- or in other words that the shell would not start accelerating spontaneously
in some direction.

\noindent \textbf{Acknowledgements.} The authors wish to thank Asaf
Nachmias for many useful discussions and comments.

\bibliographystyle{plain}
\bibliography{arxiv_version}
\end{rem}

\end{document}